\numberwithin{equation}{section} %riparte da zero ogni sezione
\newcounter{cont}[section] 
\newtheorem{thm}[cont]{Theorem}
\newtheorem{prop}[cont]{Proposition}
\newtheorem{lem}[cont]{Lemma}
\theoremstyle{definition}
 \theoremstyle{remark}
 \newtheorem{rem}[cont]{Remark}
\newcommand{\R}{\mathbb{R}}
\newcommand{\e}{\varepsilon}
\newcommand{\ut}{\tilde{u}}
\newcommand{\wt}{\tilde{w}}
\newcommand{\vt}{\tilde{v}}
\begin{document}
\baselineskip=16pt

\title[Metastability for the hyperbolic Cahn--Hilliard equation]{Metastability and layer dynamics for the hyperbolic relaxation of the Cahn--Hilliard equation}

\author[R. Folino]{Raffaele Folino}
\address[Raffaele Folino]{Dipartimento di Ingegneria e Scienze dell'Informazione e Matematica, Universit\`a degli Studi dell'Aquila (Italy)}
\email{raffaele.folino@univaq.it}
	
\author[C. Lattanzio]{Corrado Lattanzio}
\address[Corrado Lattanzio]{Dipartimento di Ingegneria e Scienze dell'Informazione e Matematica, Universit\`a degli Studi dell'Aquila (Italy)}
\email{corrado@univaq.it} 
 
\author[C. Mascia]{Corrado Mascia}
\address[Corrado Mascia]{Dipartimento di Matematica,
Sapienza Universit\`a di Roma (Italy)}
\email{mascia@mat.uniroma1.it}

\keywords{Cahn--Hilliard equation; metastability; layer dynamics, singular perturbations.}

\maketitle

%\tableofcontents

\begin{abstract} 
The goal of this paper is to accurately describe the metastable dynamics of the solutions to the hyperbolic relaxation of the Cahn--Hilliard equation in a bounded interval 
of the real line, subject to homogeneous Neumann boundary conditions.
We prove the existence of an \emph{approximately invariant manifold} $\mathcal{M}_0$ for such boundary value problem, that is
we construct a narrow channel containing $\mathcal{M}_0$ and satisfying the following property:
a solution starting from the channel evolves very slowly and leaves the channel only after an exponentially long time.
Moreover, in the channel the solution has a \emph{transition layer structure} and 
we derive a system of ODEs, which accurately describes the slow dynamics of the layers.
A comparison with the layer dynamics of the classic Cahn--Hilliard equation is also performed.
\end{abstract}

\section{Introduction}
The celebrated \emph{Cahn--Hilliard equation},
\begin{equation*}
	u_t=\Delta\left(-\e^2\Delta u+F'(u)\right), 
\end{equation*} 
where $\e$ is a positive constant and $F:\R\rightarrow\R$ is a double well potential with wells of equal depth,
was originally proposed in \cite{Cahn-Hilliard} to model phase separation in a binary system at a fixed temperature,
with constant total density and where $u$ stands for the concentration of one of the two components.
Among the phase transformations involved in phase separation, a peculiar one is named ``spinodal decomposition", 
which indicates the stage during which the mixture quickly becomes inhomogeneous, forming a fine-grained structure (cfr. \cite{Cahn,Grant2,Mai-Wan}).
In order to model the early stages of spinodal decomposition in certain glasses,
some physicists \cite{Galenko,GalenkoJou,LeZaGa} proposed the following \emph{hyperbolic relaxation of the Cahn--Hilliard equation} 
\begin{equation}\label{eq:hyp-CH-multiD}
	\tau u_{tt}+u_t=\Delta\left(-\e^2\Delta u+F'(u)\right),
\end{equation}
where $\tau$ is a positive constant. 
In particular, the \emph{hyperbolic version} \eqref{eq:hyp-CH-multiD} has been firstly proposed by Galenko in \cite{Galenko}, 
following the classical Maxwell--Cattaneo modification of the Fick's diffusion law \cite{Cat}.
Many papers have been devoted to the study of the dynamics of the solutions to \eqref{eq:hyp-CH-multiD}.
Without claiming to be complete, we list the following papers:
for the long-time behavior of the solutions and the limiting behavior as $\tau\to0$ in the one-dimensional case 
see \cite{Debussche,ZhengMilani2004,ZhengMilani2005,BonGraMir} and references therein;
for the multidimensional-case among others, we mention \cite{GraSchZel,GraSchZel2}.

In this paper, we are interested in studying the metastable dynamics of the solutions to the one-dimensional version of \eqref{eq:hyp-CH-multiD} 
\begin{equation}\label{eq:hyp-CH}
	\tau u_{tt}+u_t=\left(-\e^2u_{xx}+F'(u)\right)_{xx}, \qquad \qquad x\in(0,1),\, t>0,
\end{equation}
subject to homogeneous Neumann boundary conditions
\begin{equation}\label{eq:Neumann}
	u_x(0,t)=u_x(1,t)=u_{xxx}(0,t)=u_{xxx}(1,t)=0, \qquad \forall\,t\geq0,
\end{equation}
and initial data
\begin{equation}\label{eq:initial}
	u(x,0)=u_0(x), \quad u_t(x,0)=u_1(x), \qquad x\in[0,1].
\end{equation}
Precisely, we are interested in describing the behavior of the solutions to the initial boundary value problem \eqref{eq:hyp-CH}-\eqref{eq:Neumann}-\eqref{eq:initial} 
when the parameter $\e$ is very small and the function $F\in C^4(\R)$ satisfies 
\begin{equation}\label{eq:ass-F}
	F(\pm1)=F'(\pm1)=0, \qquad F''(\pm1)>0 \qquad \mbox{ and } \qquad F(u)>0, \quad \mbox{ for } \, u\neq\pm1.
\end{equation}
The simplest example of function satisfying \eqref{eq:ass-F} is $F(u)=\frac14(u^2-1)^2$.

The existence and persistence for an exponentially large time of metastable states with $N$ transitions between $-1$ and $+1$ 
for the IBVP \eqref{eq:hyp-CH}-\eqref{eq:initial} has been proved in \cite{FLM17pre}
by using an energy approach firstly introduced in \cite{Bron-Kohn} to study the \emph{Allen--Cahn equation}
\begin{equation}\label{eq:AC}
	u_t=\e^2u_{xx}-F'(u),
\end{equation}
and subsequently used in \cite{Bron-Hilh} to prove existence of metastable states for the classic Cahn--Hilliard equation
\begin{equation}\label{eq:CH}
	u_t=\left(-\e^2u_{xx}+F'(u)\right)_{xx}, \qquad \qquad x\in(0,1),\, t>0.
\end{equation}
Here, we investigate the metastable properties of the solutions to \eqref{eq:hyp-CH} by using a different approach,
the dynamical approach proposed by Carr and Pego in \cite{Carr-Pego} and Fusco and Hale \cite{Fusco-Hale} to study the Allen--Cahn equation \eqref{eq:AC} 
and used for the Cahn--Hilliard equation \eqref{eq:CH} in \cite{AlikBateFusc91} and \cite{Bates-Xun1,Bates-Xun2}.
The dynamical approach gives a more precise description of the dynamics of the solution to the IBVP \eqref{eq:hyp-CH}-\eqref{eq:initial}
and allows us to derive a system of ODEs which describes the evolution of such solution.

Before presenting our results, let us briefly describe the dynamics of the solutions to the classic Cahn--Hilliard equation \eqref{eq:CH}
with homogeneous Neumann boundary conditions \eqref{eq:Neumann} and recall some previous results on the metastable behavior of the solutions.
First of all, notice that any constant function is a equilibrium solution to \eqref{eq:CH}-\eqref{eq:Neumann}
and that, by integrating the equation \eqref{eq:CH} and using the boundary conditions \eqref{eq:Neumann} one finds out that the total mass $\displaystyle\int_a^b u(x,t)\,dx$ is conserved.
A linear analysis of the equation \eqref{eq:CH} about a constant solution
shows that spatially homogeneous equilibria in the \emph{spinodal region} (where $F''<0)$ are unstable \cite{Grant2}.
Moreover, it is sufficient to take an initial datum which is a small perturbation of a fixed constant in the spinodal region and the corresponding solution exhibits the phenomenon of \emph{spinodal decomposition}:
after a relatively short time, the solution to \eqref{eq:CH}-\eqref{eq:Neumann} is approximately close to $+1$ or $-1$ (the positions of the global minimum of $F$) except near a finite number $N$ of transition layers.
The first mathematical treatment and rigorous verification of such phenomenon is performed in \cite{Grant2}.
After the spinodal decomposition, the solution, which has a $N$-\emph{transition layers structure}, evolves so slow that the profile \emph{appears} to be stable.
On the other hand, it is well-known that the Cahn--Hilliard equation \eqref{eq:CH} possesses the Lyapunov functional
\begin{equation}\label{eq:energy}
	E_\e[u]=\int_a^b\left[\frac{\e^2}2u_x^2+F(u)\right]\,dx,
\end{equation}
and the solutions converge as $t\to+\infty$ to a stationary solution \cite{Zheng1986}.
The problem to minimize the energy functional \eqref{eq:energy} among all the functions satisfying $\displaystyle\int_a^b u\,dx=M$ (the total mass being conserved),
has been investigated in \cite{CarrGurtSlem} for the one-dimensional case and in \cite{Modica} for the multi-dimensional case.
In particular, in \cite{CarrGurtSlem} it has been proved that if $\e$ is small enough and $M\in(-1,1)$, then
all the minimizers are strictly monotone functions.
Therefore, the solution to \eqref{eq:CH}-\eqref{eq:Neumann} converges to a limit with a single transition and, as a consequence, we have an example of \emph{metastable dynamics}:
the solution maintains the (unstable) $N$-transitions layer structure for a very long time $T_\e$ and then converges to the asymptotic limit with a single transition.
Precisely, the evolution of the solutions depends only from the interactions between the layers, which move with an exponentially small velocity as $\e\to0$;
it follows that the lifetime $T_\e$ of a metastable state with $N$ transitions is exponentially large as $\e\to0$,
namely $T_\e=\mathcal O\left(e^{C/\e}\right)$ where $C>0$ depends only on $F$ and on the distance between the layers.

As it was previously mentioned, there are at least two different approaches to study the metastable dynamics of the solutions, 
which have been proposed in the study of the Allen--Cahn equation \eqref{eq:AC}.
The energy approach of \cite{Bron-Kohn} is based on $\Gamma$-convergence properties of the functional \eqref{eq:energy} and 
it has been applied to the Cahn--Hilliard equation \eqref{eq:CH} in \cite{Bron-Hilh};
it permits to handle both Neumann \eqref{eq:Neumann} 
and Dirichlet boundary conditions of the type
\begin{equation}\label{eq:Dirichlet}
	u(0,t)=\pm1, \quad u(1,t)=\pm1 \qquad \mbox{ and } \qquad u_{xx}(0,t)=u_{xx}(1,t)=0, \qquad \forall\,t\geq0.
\end{equation}
On the other hand, the dynamical approach of \cite{Carr-Pego,Fusco-Hale} is performed in \cite{AlikBateFusc91}, 
where the authors consider the case of an initial datum with a $2$-transition layer structure and in \cite{Bates-Xun1,Bates-Xun2},
where the general case of $N+1$ layers ($N\geq1$) is considered.
This approach permits to describe in details the movement of the layers.
In the two layer case, for the conservation of the mass, we have that the layers move in an almost rigid way (they move in the same direction at approximately the same \emph{exponentially small} velocity);
when the layers are more than $2$ the situation is more complicated and we will describe their dynamics in Section \ref{sec:layers}.

Each of the previous approaches has its advantages and drawbacks. 
The dynamical approach gives more precise results: it gives the exact order of the speed of the slow motion, and allows us to accurately describe the movement of the layers,
but permits to study only the case of homogeneous boundary conditions and the proofs are complicated and lengthy. 
The energy approach is fairly simple, it provides a rather clear and intuitive explanation for the slow motion and permits to handle both Neumann \eqref{eq:Neumann}
and Dirichlet \eqref{eq:Dirichlet} boundary conditions, but it gives only an upper bound for the velocity of the layers.
We also recall that the energy approach permits to study the vector-valued version of \eqref{eq:CH}, that is
when $u$ takes value in $\mathbb{R}^m$ and the potential $F$ vanishes only in a finite number of points (for details, see \cite{Grant}).
Finally, we mention that both the dynamical and the energy approach can be applied to study the metastability for the following hyperbolic variations of the Allen--Cahn equation
\begin{equation}\label{eq:hypAC}
	\tau u_{tt}+ g(u)u_t=\e^2u_{xx}-F'(u),
\end{equation}
for any positive function $g\in C^1(\mathbb{R})$ (cfr. \cite{Folino,Folinoproc,Folino2,FLM17}).

In this paper, we apply the dynamical approach to the IBVP \eqref{eq:hyp-CH}-\eqref{eq:Neumann}-\eqref{eq:initial}.
The well-posedness and the asymptotic behavior as $t\to+\infty$ of the solutions to such IBVP are investigated in \cite{Debussche}. 
A fundamental difference with respect to the classic Cahn--Hilliard equation \eqref{eq:CH} is that the homogeneous Neumann boundary conditions \eqref{eq:Neumann}
do not imply conservation of the mass;
as we will see in Section \ref{sec:prelimin} the solution to the IBVP \eqref{eq:hyp-CH}-\eqref{eq:Neumann}-\eqref{eq:initial}
conserves the mass if and only if the initial velocity $u_1$ is of zero mean.
Therefore, to apply the dynamical approach we need a further assumption on $u_1$;
however, by using the energy approach, it is possible to prove the metastable dynamics of the solutions without the assumption of zero-mean for $u_1$
(for details see \cite[Remark 2.7]{FLM17pre}).

The main idea of the dynamical approach introduced by Carr and Pego in \cite{Carr-Pego} is to construct a family of functions $u^{\bm h}$, which approximates a metastable states with $N+1$ transitions 
located at $\bm h=(h_1,h_2,\dots,h_{N+1})$, consider the decomposition 
\begin{equation}\label{eq:decom}
	u(x,t)=u^{\bm h(t)}(x)+w(x,t),
\end{equation}
for the solution $u$ and study the evolution of the remainder function $w$ and of the transition points $h_1,h_2,\dots,h_{N+1}$.
By inserting the decomposition \eqref{eq:decom} in the equation \eqref{eq:AC} and imposing an orthogonality condition on $w$,
it is possible to derive an ODE-PDE coupled system for $(\bm h,w)$ and prove that the solution $u$ is well-approximated by $u^{\bm h}$ as $\e\to0$ and evolves very slowly
until either two transition points are close enough or a transition point is close enough to the boundary points of the interval $(0,1)$.
In other words, with the dynamical approach it is possible to prove the existence of an \emph{approximately invariant manifold} $\mathcal{M}$, 
consisting of functions with $N+1$ transitions between $-1$ and $+1$:
if the initial datum $u_0$ is in a particular tubular neighborhood of $\mathcal{M}$, then the transition points move with an exponentially small velocity
and the solution remains in such neighborhood for an exponentially long time.
Then, since the remainder $w$ is very small as $\e\to0$, by using the approximation $w\approx0$ in \eqref{eq:decom}
one can derive a system of ODEs for $\bm h$, which accurately describes the movement of the layers, and so the evolution of the solution $u$,
until the transition points are well-separated and far away from $0$ and $1$.

This strategy has been applied to the integrated version of \eqref{eq:hyp-CH}-\eqref{eq:Neumann} in \cite{Bates-Xun1,Bates-Xun2}
and gives a precise description of the metastable dynamics of the solutions.
In the following, we will show how to adapt this strategy to the hyperbolic version \eqref{eq:hyp-CH} and we will analyze the differences with respect to \eqref{eq:CH}.
In particular, we will prove the existence of an approximately invariant manifold $\mathcal{M}_0$ contained in a narrow channel for the initial boundary problem \eqref{eq:hyp-CH}-\eqref{eq:Neumann}-\eqref{eq:initial}:
if the initial datum \eqref{eq:initial} is in the channel, then the solution $u$ remains in the channel for an exponentially long time.
Moreover, in the channel the following estimates hold:
\begin{equation}\label{eq:estimates-intro}
	\|u-u^{\bm h}\|_{{}_{L^\infty}}\leq C\e^{-5/2}\exp\left(-\frac{Al^{\bm h}}{\e}\right), \qquad \qquad |\bm h'|_{{}_\infty}\leq C\e^{-2}\tau^{-1/2}\exp\left(-\frac{Al^{\bm h}}{\e}\right),
\end{equation}
where $A:=\sqrt{\min\{F''(-1),F''(+1)\}}$, $\ell^{\bm h}:=\min\{h_j-h_{j-1}\}$ and $|\cdot|_{{}_{\infty}}$ denotes the maximum norm in $\mathbb{R}^N$.
Furthermore, we will derive the following system of ODEs
\begin{equation}\label{eq:ODE-intro}
	\tau \bm h''+\bm h'+\tau\bm{\mathcal{Q}}(\bm h,\bm h')=\bm{\mathcal P}(\bm h),
\end{equation}
which describes the movement of the transition layers and has to be compared with the system $\bm h'=\bm{\mathcal P}(\bm h)$, 
which describes the dynamics of the solutions to the classic Cahn-Hilliard equation;
for the formulas of $\bm{\mathcal P}$ and $\bm{\mathcal{Q}}$ see Section \ref{sec:layers}.

The rest of the paper is organized as follows.
In Section \ref{sec:prelimin} we give all the definitions, the preliminaries and we construct the approximate invariant manifold $\mathcal{M}_0$
following the ideas of \cite{Carr-Pego}, \cite{Bates-Xun1}.
Section \ref{sec:slow} contains the main result of the paper, Theorem \ref{thm:main}, where we prove that the manifold $\mathcal{M}_0$
is approximately invariant for \eqref{eq:hyp-CH}-\eqref{eq:Neumann}, by constructing a slow channel which contains $\mathcal{M}_0$ and where
the solutions stay for an exponentially long time and satisfy \eqref{eq:estimates-intro}.
Finally, Section \ref{sec:layers} is devoted to the description of the movement of the layers. 
We will derive the system of ODEs \eqref{eq:ODE-intro} and we will analyze the differences between the classic Cahn--Hilliard equation \eqref{eq:CH} and its hyperbolic relaxation \eqref{eq:hyp-CH}.

\section{Preliminaries}\label{sec:prelimin}
In this section we collect some results of \cite{Carr-Pego}, \cite{Bates-Xun1,Bates-Xun2} we will use later
and we introduce the \emph{extended base manifold} $\mathcal{M}_0$, which is, as we shall prove in Section \ref{sec:slow}, 
approximately invariant for the boundary value problem \eqref{eq:hyp-CH}-\eqref{eq:Neumann}.
\subsection{Approximate metastable states}
The aim of this subsection is to construct a family of functions with $N+1$ transitions between $-1$ and $+1$, approximating metastable states for \eqref{eq:hyp-CH}.
Such construction was firstly introduced by Carr and Pego \cite{Carr-Pego} to describe the metastable dynamics of the solutions to the Allen--Cahn equation \eqref{eq:AC},
and then it has also been used to study the metastability for the Cahn--Hilliard equation \cite{Bates-Xun1,Bates-Xun2} 
and for hyperbolic variants of the Allen--Cahn equation \cite{FLM17}.
Here, we briefly recall the construction of the family and some useful properties we will use later, for details see \cite{Carr-Pego}.

For fixed $\rho>0$, we introduce the set
\begin{equation*}
	\Omega_\rho:=\bigl\{{\bm h}\in\mathbb{R}^{N+1}\, :\,0<h_1<\cdots<h_{N+1}<1,\quad
		 h_j-h_{j-1}>\varepsilon/\rho\mbox{ for } j=1,\dots,N+2\bigr\},
\end{equation*}
where we define $h_0:=-h_1$ and $h_{N+2}:=2-h_{N+1}$, because of the homogeneous Neumann boundary conditions \eqref{eq:Neumann}.
In what follows, we fix a minimal distance $\delta\in(0,1/(N+1))$ and we consider the parameters $\varepsilon$ and $\rho$ such that
\begin{equation}\label{eq:triangle}
	0<\varepsilon<\varepsilon_0\qquad\textrm{and}\qquad \delta<\frac{\varepsilon}{\rho}<\frac{1}{N+1},
\end{equation}
for some $\varepsilon_0>0$ to be chosen appropriately small.

We associate to any $\bm h\in\Omega_\rho$ a function $u^{\bm h}=u^{\bm h}(x)$ which approximates a metastable
state with $N+1$ transition points located at $h_1,\dots,h_{N+1}$. 
To do this, we make use of the solutions to the following boundary value problem:
given $\ell>0$, let $\phi(\cdot,\ell,+1)$ be the solution to
\begin{equation}\label{eq:fi}
	\mathcal{L}^{AC}(\phi):=-\varepsilon^2\phi_{xx}+F'(\phi)=0, \qquad \quad
	\phi\bigl(-\tfrac12\ell\bigr)=\phi\bigl(\tfrac12\ell\bigr)=0,
\end{equation}
with $\phi>0$ in $(-\tfrac12\ell,\tfrac12\ell)$,
and $\phi(\cdot,\ell,-1)$ the solution to \eqref{eq:fi} with $\phi<0$ in $(-\tfrac12\ell,\tfrac12\ell)$.
The functions $\phi(\cdot,\ell,\pm1)$ are well-defined if $\ell/\varepsilon$ is sufficiently large, and they depend on $\varepsilon$ 
and $\ell$ only through the ratio $\varepsilon/\ell$. 
Moreover, we have  
\begin{equation*}
	\max_x|\phi(\cdot,\ell,\pm1)|=|\phi(0,\ell,\pm1)|=M_\pm(\ell/\varepsilon)
	\qquad \quad \textrm{and} \qquad \quad
	\max_x|\phi_x(\cdot,\ell,\pm1)|\leq C\varepsilon^{-1},
\end{equation*}
where $C>0$ is a constant depending only on the function $F$. 
In particular, $M_\pm$ tends to $+1$ as $\varepsilon/\ell\to 0$ (more details in Proposition \ref{prop:alfa,beta}).

The family of the approximate metastable states is constructed by matching together 
the functions $\phi(\cdot,\ell,\pm1)$, using smooth cut-off functions:
given $\chi:\mathbb{R}\rightarrow[0,1]$ a $C^\infty$-function with $\chi(x)=0$ for $x\leq-1$ and $\chi(x)=1$ for $x\geq1$, set 
\begin{equation*}
	\chi^j(x):=\chi\left(\frac{x-h_j}\varepsilon\right) \qquad\textrm{and}\qquad
	\phi^j(x):=\phi\left(x-h_{j-1/2},h_j-h_{j-1},(-1)^j\right),
\end{equation*}
where 
\begin{equation*}
	h_{j+1/2}:=\tfrac12(h_j+h_{j+1})\qquad j=0,\dots,N+1,
\end{equation*}
(note that $h_{1/2}=0$, $h_{N+3/2}=1$).
Then, we define the function $u^{\bm h}$ as
\begin{equation}\label{eq:uh}
	u^{\bm h}:=\left(1-\chi^j\right)\phi^j+\chi^j\phi^{j+1} \qquad \textrm{in}\quad I_j:=[h_{j-1/2},h_{j+1/2}],
\end{equation}
for $j=1,\dots,N+1$, and the manifold
\begin{equation*}
	\mathcal{M}^{AC}:=\{u^{\bm h} :\bm h\in\Omega_\rho\}.
\end{equation*} 
In \cite{Carr-Pego}, the authors show that the manifold $\mathcal{M}^{AC}$ is approximately invariant for the Allen--Cahn equation \eqref{eq:AC}.
On the other hand, the \emph{extended} manifold
\begin{equation*}
	\mathcal{M}^{AC}_{{}_0}:=\mathcal{M}^{AC}\times\{0\}=\{(u^{\bm h},0) :u^{\bm h}\in\mathcal{M}^{AC}\}
\end{equation*}
is approximately invariant for the hyperbolic variant \eqref{eq:hypAC}, see \cite{FLM17}.

To get an idea of the structure of the function $u^{\bm h}$ defined in \eqref{eq:uh}, 
we recall that, if $\rho>0$ is sufficiently small and $\bm h\in\Omega_\rho$, then $u^{\bm h}\approx\pm1$ away from $h_j$ for $j=1,\dots,N+1$,
and $u^{\bm h}(x)\approx\Phi\left((x-h_j)(-1)^{j-1}\right)$ for $x$ near $h_j$, where $\Phi$ is the unique solution to the problem
\begin{equation*}
	\mathcal{L}^{AC}(\Phi):=-\varepsilon^2\Phi_{xx}+F'(\Phi)=0, \qquad \qquad \lim_{x\rightarrow\pm\infty} \Phi(x)=\pm\infty, \qquad\qquad \Phi(0)=0.
\end{equation*}
For instance, in the case $F(u)=\frac14(u^2-1)^2$, the unique solution is $\Phi(x)=\tanh(x/\sqrt2\e)$.
In conclusion, we say that $u^{\bm h}$ is a smooth function of $\bm h$ and $x$, 
which is approximately $\pm1$ except near $N+1$ transition points located at $h_1,\cdots,h_{N+1}$;
moreover, $\mathcal{L}^{AC}(u^{\bm h})=0$ except in an $\e$--neighborhood of the transition points $h_j$.
Precisely, we have
\begin{equation}\label{eq:prop-uh}
	\begin{aligned}
	u^{\bm h}(0)&=\phi(0,2h_1,-1)<0,
			&\qquad 	u^{\bm h}(h_{j+1/2})&=\phi\left(0,h_{j+1}-h_j,(-1)^{j+1}\right),\\
	u^{\bm h}(h_j)&=0,
			&\qquad \mathcal{L}^{AC}(u^{\bm h}(x))&=0\quad \textrm{for }|x-h_j|\geq\varepsilon,
	\end{aligned}
\end{equation}
for any $j=1,\dots,N+1$.

Central to the study of the metastable dynamics of the solutions to both the Allen--Cahn and the Cahn--Hilliard equation is an accurate characterization of the 
quantities $u^{\bm h}(h_{j+1/2})=\phi\left(0,h_{j+1}-h_j,(-1)^{j+1}\right)$ and $F\left(u^{\bm h}(h_{j+1/2})\right)$, 
because the motion of the transition points $h_1,\dots,h_{N+1}$ depend essentially on these quantities.
Since $\phi(0,\ell,\pm1)$ depends only on the ratio $r=\varepsilon/\ell$, we can define
\begin{equation*}
	\alpha_\pm(r):=F(\phi(0,\ell,\pm1)), \qquad \quad \beta_\pm(r):=1\mp\phi(0,\ell,\pm1).
\end{equation*}
By definition, $\phi(0,\ell,\pm1)$ is close to $+1$ or $-1$ and so, $\alpha_\pm(r), \beta_\pm(r)$ are close to $0$. 
The next result characterizes the leading terms in $\alpha_\pm$ and $\beta_\pm$ as $r\to 0$.
\begin{prop} [Carr--Pego \cite{Carr-Pego}] \label{prop:alfa,beta}
Let $F$ be such that \eqref{eq:ass-F} holds and set 
\begin{equation*}
	A_\pm^2:=F''(\pm1), \qquad K_{\pm}=2\exp\left\{\int_0^1\left(\frac{A_\pm}{(2F(\pm t))^{1/2}}-\frac{1}{1-t}\right)\,dt\right\}.
\end{equation*}
There exists $r_0>0$ such that if $0<r<r_0$, then
\begin{equation*}
	\begin{aligned}
	\alpha_\pm(r)&=\tfrac12K^2_\pm A^2_\pm\,\exp(-{A_\pm}/r\bigr)\bigl\{1+O\left(r^{-1} \exp(-{A_\pm}/2r)\right)\bigr\},\\
	\beta_\pm(r)&=K_\pm\,\exp\bigl(-{A_\pm}/2r\bigr)\bigl\{1+O\left(r^{-1} \exp(-{A_\pm}/2r)\right)\bigr\},
	\end{aligned}
\end{equation*}
with corresponding asymptotic formulae for the derivatives of $\alpha_\pm$ and $\beta_\pm$.
\end{prop}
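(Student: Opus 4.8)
The plan is to reduce the boundary value problem \eqref{eq:fi} to a quadrature and then carry out a careful asymptotic analysis of the resulting integral as $r\to0$. I focus on the $+1$ case and write $m:=\phi(0,\ell,+1)=M_+(r)$; the $-1$ case is entirely analogous, working near $-1$ with $F(-t)$ in place of $F(t)$. Since $\phi(\cdot,\ell,+1)$ is even and positive, it increases from $0$ at $x=-\tfrac12\ell$ to its maximum $m$ at $x=0$, so that $\phi_x(0)=0$. Multiplying the equation $-\e^2\phi_{xx}+F'(\phi)=0$ by $\phi_x$ and integrating produces the first integral
\begin{equation*}
	\tfrac12\e^2\phi_x^2=F(\phi)-F(m),
\end{equation*}
the constant being fixed by the values at $x=0$. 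Separating variables and integrating over $(-\tfrac12\ell,0)$ yields the fundamental relation linking $r=\e/\ell$ and $m$,
\begin{equation*}
	\frac{1}{2r}=\int_0^{m}\frac{d\phi}{\sqrt{2\bigl(F(\phi)-F(m)\bigr)}}=:I(m).
\end{equation*}
As $r\to0$ the left-hand side diverges, which forces $m\to1^-$ and hence $\alpha_+=F(m)$, $\beta_+=1-m\to0$; everything therefore reduces to understanding $I(m)$ as $m\to1$.

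Next I would extract the leading behaviour of $I(m)$ by comparing $F$ with its quadratic model at the well. Using $F(\phi)\approx\tfrac12A_+^2(1-\phi)^2$ and $F(m)\approx\tfrac12A_+^2\beta_+^2$ suggests comparing $I(m)$ with the explicitly integrable model
\begin{equation*}
	I_{\mathrm{mod}}(m):=\int_0^m\frac{d\phi}{A_+\sqrt{(1-\phi)^2-\beta_+^2}}=\frac{1}{A_+}\cosh^{-1}\frac{1}{\beta_+}=\frac{1}{A_+}\log\frac{2}{\beta_+}+O(\beta_+^2).
\end{equation*}
The key point is that the difference $I(m)-I_{\mathrm{mod}}(m)$ has no singularity at $\phi=m$, since both integrands blow up like $(m-\phi)^{-1/2}$ with the same coefficient, and it converges, as $m\to1$, to the regularized integral
\begin{equation*}
	\frac{1}{A_+}\int_0^1\left(\frac{A_+}{\sqrt{2F(\phi)}}-\frac{1}{1-\phi}\right)d\phi=\frac{1}{A_+}\log\frac{K_+}{2},
\end{equation*}
which is precisely the definition of $K_+$ (the integrand is integrable because $\sqrt{2F(\phi)}\sim A_+(1-\phi)$ as $\phi\to1$, so the two singular parts cancel). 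Combining the two contributions gives
\begin{equation*}
	I(m)=\frac{1}{A_+}\log\frac{K_+}{\beta_+}+O\!\left(\beta_+\log\beta_+\right).
\end{equation*}

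Finally I would invert this relation. Setting $I(m)=1/(2r)$ gives $\log(K_+/\beta_+)=A_+/(2r)+O(\beta_+\log\beta_+)$, and exponentiating yields
\begin{equation*}
	\beta_+(r)=K_+\exp\!\left(-\frac{A_+}{2r}\right)\bigl\{1+O\!\left(r^{-1}\exp(-A_+/2r)\right)\bigr\},
\end{equation*}
where the factor $r^{-1}$ in the error arises exactly because $\log\beta_+\sim-A_+/(2r)$ multiplies the leading correction. The formula for $\alpha_+$ then follows from the Taylor expansion $\alpha_+=F(m)=F(1-\beta_+)=\tfrac12A_+^2\beta_+^2\{1+O(\beta_+)\}$ together with the substitution of $\beta_+$. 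The asymptotics for the derivatives are obtained by differentiating the quadrature relation $1/(2r)=I(m)$ implicitly (equivalently, by differentiating the expansions above) and using the corresponding expansion of $I'(m)$.

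The main obstacle is the rigorous asymptotic evaluation of $I(m)$: one must justify that the singular contributions near $\phi=m$ cancel in $I(m)-I_{\mathrm{mod}}(m)$, that the resulting difference converges to the regularized integral defining $K_+$, and --- most delicately --- that the error terms are controlled uniformly so as to produce the stated $O(r^{-1}\exp(-A_+/2r))$ corrections after the logarithmic inversion. Managing the interplay between the cutoff at $\phi=m$, the small parameter $\beta_+$, and the non-quadratic part of $F$ is where the careful bookkeeping lies; this is exactly the content of the analysis in \cite{Carr-Pego}.
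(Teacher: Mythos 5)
This proposition is quoted in the paper from \cite{Carr-Pego} without proof, so there is no internal argument to compare against; the relevant benchmark is the original Carr--Pego analysis. Your reconstruction --- the first integral $\tfrac12\e^2\phi_x^2=F(\phi)-F(m)$, the quadrature relation $1/(2r)=I(m)$, the comparison with the explicitly integrable quadratic model giving $\cosh^{-1}(1/\beta_+)$, the regularized integral that produces exactly the constant $K_+$, and the logarithmic inversion that converts the $O(\beta_+\log\beta_+)$ correction into the stated $O\left(r^{-1}\exp(-A_+/2r)\right)$ error --- is precisely the route of that original proof, and its structure is correct (with the uniform error bookkeeping, which you rightly flag as the delicate part, being the content of the cited analysis).
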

For $j=1,\dots,N+1$, we set
\begin{equation*}
	l_j:=h_{j+1}-h_{j}, \qquad \qquad r_{j}:=\frac{\varepsilon}{l_j},
\end{equation*}
and
\begin{equation*}
	\alpha^{j}:=\left\{\begin{aligned}
		&\alpha_+(r_{j}) 	&j \textrm{ odd},\\
		&\alpha_-(r_{j})  	&j \textrm{ even},\\
		\end{aligned}\right.
	\qquad
	\beta^{j}:=\left\{\begin{aligned}
		&\beta_+(r_{j})	&j \textrm{ odd},\\
		&\beta_-(r_{j})	&j \textrm{ even}.\\
		\end{aligned}\right.
\end{equation*}
\begin{rem}\label{rem:alfa}
Let $\bm h\in\Omega_\rho$ with $\e,\rho$ satisfying \eqref{eq:triangle} and let $l^{\bm h}:=\min\{h_j-h_{j-1}\}$.
Then, the quantities $\alpha^j$ and $\beta^j$ are exponentially small in $\e$, namely there exists $C>0$ (independent of $\e$) such that
\begin{align}
	0<\alpha^j\leq C\exp\left(-\frac{Al_j}{\e}\right)\leq C\exp\left(-\frac{Al^{\bm h}}{\e}\right), \label{eq:alfaj}\\
	0<\beta^j\leq C\exp\left(-\frac{Al_j}{2\e}\right)\leq C\exp\left(-\frac{Al^{\bm h}}{2\e}\right), \label{eq:betaj}
\end{align}
where $A:=\sqrt{\min\{F''(-1),F''(+1)\}}$.
Moreover, assuming that $F$ is an even function and so that $\alpha_+\equiv\alpha_-$,
from Proposition \ref{prop:alfa,beta} we get
\begin{equation*}
	\frac{\alpha^j}{\alpha^i}\leq C\exp\left(-\frac{A}{\e}(l_j-l_i)\right),  
\end{equation*}
for some $C>0$.
Hence, if $l_j-l_i\geq \kappa$ for some $\kappa>0$, we deduce
\begin{equation}\label{eq:alfa^j-alfa^i}
	\alpha^j\leq C\exp\left(-\frac{A\kappa}{\e}\right)\alpha^i.
\end{equation}
Therefore, if $l_j>l_i$ then $\alpha^j<\alpha^i$, and for $\e/\kappa\ll1$, $\alpha^j$ is \emph{exponentially small} with respect to $\alpha^i$.
\end{rem}
Now, let us introduce the \emph{barrier function}
\begin{equation}\label{eq:barrier}
	\Psi(\bm h):=\sum_{j=1}^{N+1}{\langle\mathcal{L}^{AC}\bigl(u^{\bm h}\bigr),k^{\bm h}_j\rangle}^2=\sum_{j=1}^{N+1}\bigl(\alpha^{j+1}-\alpha^{j}\bigr)^2,
\end{equation}
where $\mathcal{L}^{AC}$ is the Allen--Cahn differential operator introduced above and the functions $k^{\bm h}_j$ are defined by
\begin{equation*}
	k^{\bm h}_j(x):=-\gamma^j(x)u^{\bm h}_x(x), \qquad  \mbox{ with } \; 
	\gamma^j(x):=\chi\left(\frac{x-h_{j}-\varepsilon}\varepsilon\right)\left[1-\chi\left(\frac{x-h_{j+1}+\varepsilon}\varepsilon\right)\right].
\end{equation*}
By construction, $k^{\bm h}_j$ are smooth functions of $x$ and $\bm h$ and are such that
\begin{equation*}
	\begin{aligned}
	k^{\bm h}_j(x)&=0				&\quad \textrm{for}\quad &x\notin[h_{j-1/2},h_{j+1/2}],\\
	k^{\bm h}_j(x)&=-u^{\bm h}_x(x)	&\quad \textrm{for}\quad &x\in[h_{j-1/2}+2\varepsilon,h_{j+1/2}-2\varepsilon]. 
	\end{aligned}
\end{equation*}
Such functions are fundamental in the study of the metastability for the Allen--Cahn equation \eqref{eq:AC} and for the hyperbolic Allen--Cahn equation \eqref{eq:hypAC} (see \cite{Carr-Pego} and \cite{FLM17}, respectively),
and play a crucial role in the study of the metastability for the hyperbolic Cahn--Hilliard equation \eqref{eq:hyp-CH}.
We recall that there exists $C>0$ independent of $\e$ such that
\begin{equation}\label{eq:ineq-k}
	\|k_j^{\bm h}\|+\e\|k^{\bm h}_{ij}\|\leq C\e^{-1/2}, \qquad \qquad \mbox{where } \quad
	k^{\bm h}_{ji}:=\partial_{h_i} k^{\bm h}_j.
\end{equation}
For the proof of \eqref{eq:ineq-k} see \cite[Proposition 2.3]{Carr-Pego}.

In conclusion, we collect some useful properties of the derivative of $u^{\bm h}$ with respect to $h_j$ we will use later; 
we will use the notation
\begin{equation*}
	u^{\bm h}_j:=\frac{\partial u^{\bm h}}{\partial h_j}, 
	%\qquad \quad \nabla_{\bm h} u^{\bm h}:=\bigl(u^{\bm h}_1,\dots,u^{\bm h}_{N+1}\bigr),
	\qquad \quad u^{\bm h}_{ji}:=\frac{\partial^2 u^{\bm h}}{\partial h_j\partial h_i}.
\end{equation*}
\begin{lem}\label{lem:u^h_j}
The interval $[h_{j-1}-\varepsilon,h_{j+1}+\varepsilon]$ contains the support of $u^{\bm h}_j$ and
\begin{equation*}
	u^{\bm h}_j(x)=\begin{cases}
		\mathcal{O}\left(\e^{-1}\beta^{j-1}\right), & x\in[h_{j-1}+\e, h_{j-1/2}], \\
		-u^{\bm h}_x(x)+\mathcal{O}\left(\e^{-1}\max(\beta^{j-1},\beta^{j})\right), \qquad & x\in I_j,\\
		\mathcal{O}\left(\e^{-1}\beta^{j}\right), & x\in [h_{j+1/2},h_{j+1}+\e], \\
		0, & \mbox{otherwise},
		\end{cases}
\end{equation*}
for $j=1,\dots,N+1$.
Moreover, there exists $C>0$ such that
\begin{equation}\label{eq:uh_j}
	\e\|u^{\bm h}_j\|_{{}_{L^\infty}}+\e^{1/2} \|u_j^{\bm h}\|\leq C, \qquad \qquad j=1,\dots,N+1.
\end{equation}	
\end{lem}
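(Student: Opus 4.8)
The plan is to differentiate the explicit gluing formula \eqref{eq:uh} interval by interval, tracking the $h_j$–dependence through the chain rule and isolating a leading translation term $-u^{\bm h}_x$ together with exponentially small corrections. First I would identify where $h_j$ enters: in the definition of $u^{\bm h}$ on $I_i$ the only building blocks depending on $h_j$ are $\chi^j$, $\phi^j$ and $\phi^{j+1}$, which are used in $I_{j-1}\cup I_j$, $I_j$ and $I_j\cup I_{j+1}$ respectively. Since $\chi^i,\phi^i$ with $i\neq j,j+1$ are independent of $h_j$, the derivative $u^{\bm h}_j$ vanishes outside $I_{j-1}\cup I_j\cup I_{j+1}$; moreover on $I_{j-1}$ one has $u^{\bm h}_j=\chi^{j-1}\phi^j_{h_j}$ and on $I_{j+1}$ one has $u^{\bm h}_j=(1-\chi^{j+1})\phi^{j+1}_{h_j}$, and the factors $\chi^{j-1}$, $1-\chi^{j+1}$ vanish for $x\leq h_{j-1}-\e$ and $x\geq h_{j+1}+\e$. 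This pins the support inside $[h_{j-1}-\e,h_{j+1}+\e]$ and gives the last case of the piecewise formula.

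Next, on $I_j$ I would compute directly. Using $\chi^j_{h_j}=-\chi^j_x$, $\phi^j_x=\phi_y$, $\phi^j_{h_j}=-\tfrac12\phi_y+\phi_\ell$ (since $h_{j-1/2}$ moves with rate $\tfrac12$ and $\ell=h_j-h_{j-1}$ with rate $1$), and likewise $\phi^{j+1}_{h_j}=-\tfrac12\phi_y-\phi_\ell$, a short manipulation yields, on $I_j$, the identity $u^{\bm h}_j+u^{\bm h}_x=(1-\chi^j)\big(\tfrac12\phi^j_y+\phi^j_\ell\big)+\chi^j\big(\tfrac12\phi^{j+1}_y-\phi^{j+1}_\ell\big)$, so that the leading part of $u^{\bm h}_j$ is exactly $-u^{\bm h}_x$. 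On $I_{j-1}$ (where $\chi^{j-1}=1$) one gets $u^{\bm h}_j=-\tfrac12\phi^j_y+\phi^j_\ell$, and on $I_{j+1}$ (where $1-\chi^{j+1}=1$) one gets $u^{\bm h}_j=-\tfrac12\phi^{j+1}_y-\phi^{j+1}_\ell$.

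The crucial point, which I expect to be the \emph{main obstacle}, is that each surviving combination of $\phi_y$ and $\phi_\ell$ is exponentially small of the claimed order, even across the transitions at $h_{j-1}$ and $h_{j+1}$ where $\phi_y=\mathcal O(\e^{-1})$ is not small by itself. Here I would invoke the Carr--Pego profile estimates behind Proposition \ref{prop:alfa,beta}: near a transition the solution $\phi(\cdot,\ell,\pm1)$ is, to leading order, an $\ell$–independent heteroclinic centered at the transition point, so the operators $-\tfrac12\partial_y\pm\partial_\ell$ annihilate the transition–scale part and leave only a remainder controlled by $\beta$. This gives $\big|-\tfrac12\phi^j_y+\phi^j_\ell\big|=\mathcal O(\e^{-1}\beta^{j-1})$ on $[h_{j-1}+\e,h_{j-1/2}]$ and $\big|-\tfrac12\phi^{j+1}_y-\phi^{j+1}_\ell\big|=\mathcal O(\e^{-1}\beta^{j})$ on $[h_{j+1/2},h_{j+1}+\e]$, while on $I_j$ the remainder is $\mathcal O(\e^{-1}\max(\beta^{j-1},\beta^j))$; together with the previous step this is precisely the piecewise description.

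Finally, the bound \eqref{eq:uh_j} follows from this structure. Since $\|u^{\bm h}_x\|_{{}_{L^\infty}}\leq C\e^{-1}$ (from $\max|\phi_x|\leq C\e^{-1}$) and the corrections are $\mathcal O(\e^{-1}\beta)$, the decomposition gives $\|u^{\bm h}_j\|_{{}_{L^\infty}}\leq C\e^{-1}$, hence $\e\|u^{\bm h}_j\|_{{}_{L^\infty}}\leq C$. For the $L^2$ part, the dominant term $-u^{\bm h}_x$ is concentrated in an $\e$–neighborhood of $h_j$ with $\int (u^{\bm h}_x)^2\,dx\leq C\e^{-1}$, and the exponentially small corrections contribute negligibly, so $\|u^{\bm h}_j\|\leq C\e^{-1/2}$ and $\e^{1/2}\|u^{\bm h}_j\|\leq C$, completing the estimate.
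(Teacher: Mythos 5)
Your proposal is correct in structure and is essentially the argument of Carr and Pego that the paper itself relies on: the paper gives no proof of this lemma at all, deferring entirely to \cite[Sections 7--8]{Carr-Pego}, and your differentiation of the gluing formula \eqref{eq:uh}, the support localization via the factors $\chi^{j-1}$ and $1-\chi^{j+1}$, the chain-rule rates, the cancellation of the $\chi^j_x(\phi^j-\phi^{j+1})$ terms in $u^{\bm h}_j+u^{\bm h}_x$, and the reduction to bounding the combinations $\mp\tfrac12\phi_y\pm\phi_\ell$ reproduce exactly how that proof goes. Two caveats are worth recording. First, the step you call the main obstacle is indeed the entire analytic content: Proposition \ref{prop:alfa,beta} as stated only gives asymptotics of $\alpha_\pm$, $\beta_\pm$ (quantities attached to the midpoint value $\phi(0,\ell,\pm1)$) and their $r$-derivatives; the pointwise bound $\bigl|\mp\tfrac12\phi_y\pm\phi_\ell\bigr|=\mathcal{O}\left(\e^{-1}\beta\right)$ on a half-interval requires the representation of $\phi_\ell$ coming from the first integral $\e^2\phi_y^2=2\bigl(F(\phi)-F(\phi(0,\ell,\pm1))\bigr)$ and a quantitative comparison of $\phi$ with the heteroclinic $\Phi$, which is precisely the content of \cite[Section 7]{Carr-Pego}; invoking it is legitimate here (the paper does no more), but one should be clear that your argument is a reduction to that estimate, not a proof of it. Second, your uniform chain-rule formulas fail at the boundary indices: because of the reflection conventions $h_0=-h_1$ and $h_{N+2}=2-h_{N+1}$, one has $h_{1/2}\equiv 0$ and $h_{N+3/2}\equiv 1$, so $\partial_{h_1}\phi^1=2\phi_\ell$ (center rate $0$, length rate $2$) rather than $-\tfrac12\phi_y+\phi_\ell$, and similarly $\partial_{h_{N+1}}\phi^{N+2}=-2\phi_\ell$; the same cancellation mechanism still applies (the annihilating operator becomes $\partial_y+2\partial_\ell$ up to a sign and a factor), but the cases $j=1$ and $j=N+1$ need this separate bookkeeping for the piecewise formula and the bound \eqref{eq:uh_j} to hold as stated for all $j=1,\dots,N+1$.
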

For the precise formula for $u^{\bm h}_j$ and the proof of Lemma \ref{lem:u^h_j} see \cite[Sections 7-8]{Carr-Pego}.

Thanks to Lemma \ref{lem:u^h_j}, we can state that if we neglect the exponentially small terms, then $u^{\bm h}_j$ is equal to $-u^{\bm h}_x$ in $I_j$ and it is zero for $x\notin I_j$.
We will use such approximation in Section \ref{sec:layers} to derive the ODE describing the motion of the transition layers $h_1,\dots,h_{N+1}$.

\subsection{Base Manifold}
In this subsection we define the base manifold for the hyperbolic Cahn--Hilliard equation \eqref{eq:hyp-CH}.
Integrating the equation \eqref{eq:hyp-CH} in $[0,1]$ and using the homogeneous Neumann boundary conditions \eqref{eq:Neumann}, 
we obtain that the total mass $m(t):=\displaystyle\int_0^1u(y,t)\,dy$ satisfies the ODE
\begin{equation*}
	\tau m''(t)+m'(t)=0, \qquad m(0)=\int_0^1 u_0(y)\,dy, \quad m'(0)=\int_0^1u_1(y)\,dy.
\end{equation*}
Then, as a trivial consequence, $m(t)=m(0)+\tau m'(0)(1-e^{-t/\tau})$ and the total mass $m$ is conserved if and only if 
\begin{equation}\label{eq:ass-u1}
	\int_0^1 u_1(y)\,dy=0.
\end{equation}
From now on, we will assume that the initial velocity satisfies \eqref{eq:ass-u1} in order to have conservation of the mass,
and we also assume that the initial profile $u_0$ has mass equal to $M$, for some $M\in(-1,1)$.
It follows that 
\begin{equation}\label{eq:conservation}
	m(t)=\int_0^1 u_0(y)\,dy=M\in(-1,1), \qquad \qquad \mbox{ for any } t\geq0.
\end{equation}
Since the total mass is conserved, we introduce the manifold
\begin{equation*}
	\mathcal{M}^{CH}:=\left\{u^{\bm h}\in \mathcal{M}^{AC}: \displaystyle\int_0^1u^{\bm h}(x)\,dx=M\right\}.
\end{equation*}
In \cite{Bates-Xun1,Bates-Xun2}, the authors study the dynamics of the solutions to the Cahn--Hilliard equation \eqref{eq:CH} in a neighborhood of $\mathcal{M}^{CH}$
and show that such manifold is approximately invariant for \eqref{eq:CH}.
In this paper, we will show that the \emph{extended base manifold}
\begin{equation*}
	\mathcal{M}^{CH}_{{}_{0}}:=\mathcal{M}^{CH}\times\{0\}=\left\{(u^{\bm h},0)\,: \, u^{\bm h}\in\mathcal{M}^{CH} \right\}
\end{equation*}
is approximately invariant for the hyperbolic Cahn--Hilliard equation \eqref{eq:hyp-CH}.
From now on, we drop the superscript $CH$ and we use the notation $\mathcal{M}^{CH}=\mathcal{M}$ and $\mathcal{M}^{CH}_{{}_{0}}=\mathcal{M}_{{}_{0}}$.

The following lemma of \cite{Bates-Xun1} is crucial in the study of the metastable dynamics of \eqref{eq:hyp-CH}
in a neighborhood of $\mathcal{M}_{{}_{0}}$. For reader's convenience, we report here below its proof.
\begin{lem}\label{lem:M(h)}
Let $M(\bm h):=\displaystyle\int_0^1u^{\bm h}(x)\,dx$, for $\bm h\in\Omega_\rho$.
Then, $M(\bm h)$ is a smooth function of $\bm h$ and 
\begin{equation*}
	\frac{\partial M}{\partial h_j}=2(-1)^{j}+ \mathcal{O}\left(\e^{-1}\max(\beta^{j-1},\beta^j)\right).
\end{equation*}
\end{lem}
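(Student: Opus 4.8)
The plan is to differentiate under the integral sign and then read off the local structure of $u^{\bm h}_j$ from Lemma \ref{lem:u^h_j}. Since the construction \eqref{eq:uh} makes $u^{\bm h}$ a smooth function of $(\bm h,x)$, the map $\bm h\mapsto M(\bm h)=\int_0^1 u^{\bm h}\,dx$ is smooth and
\[
	\frac{\partial M}{\partial h_j}=\int_0^1 u^{\bm h}_j(x)\,dx.
\]
By Lemma \ref{lem:u^h_j} the integrand is supported in $[h_{j-1}-\e,h_{j+1}+\e]$, so I would split this integral according to the regimes listed there: the central interval $I_j=[h_{j-1/2},h_{j+1/2}]$, where $u^{\bm h}_j=-u^{\bm h}_x+\mathcal{O}\left(\e^{-1}\max(\beta^{j-1},\beta^j)\right)$, and the two flanking intervals $[h_{j-1}+\e,h_{j-1/2}]$ and $[h_{j+1/2},h_{j+1}+\e]$, on which $u^{\bm h}_j$ is $\mathcal{O}\left(\e^{-1}\beta^{j-1}\right)$ and $\mathcal{O}\left(\e^{-1}\beta^{j}\right)$ respectively.

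The main term comes from integrating $-u^{\bm h}_x$ over $I_j$, which reduces to a boundary evaluation,
\[
	\int_{I_j}\bigl(-u^{\bm h}_x\bigr)\,dx=u^{\bm h}(h_{j-1/2})-u^{\bm h}(h_{j+1/2}).
\]
Here I would invoke the explicit values in \eqref{eq:prop-uh}, namely $u^{\bm h}(h_{j+1/2})=\phi\left(0,l_j,(-1)^{j+1}\right)$ together with the same identity with $j$ replaced by $j-1$, and the defining relation $\beta_\pm(r)=1\mp\phi(0,\ell,\pm1)$. This gives $\phi\left(0,l_{j-1},(-1)^{j}\right)=(-1)^{j}(1-\beta^{j-1})$ and $\phi\left(0,l_j,(-1)^{j+1}\right)=(-1)^{j+1}(1-\beta^{j})$; subtracting and using $(-1)^{j+1}=-(-1)^j$ yields $2(-1)^j-(-1)^j\bigl(\beta^{j-1}+\beta^{j}\bigr)$, so the leading constant $2(-1)^j$ appears and the remaining $\beta$-correction is of order $\mathcal{O}\left(\max(\beta^{j-1},\beta^j)\right)$.

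It then remains to absorb the error terms. Each of the three remaining contributions is a quantity of size $\mathcal{O}\left(\e^{-1}\max(\beta^{j-1},\beta^j)\right)$ integrated over an interval of length $\tfrac12 l_{j-1}$, $\tfrac12(l_{j-1}+l_j)$ or $\tfrac12 l_j$ (up to additive $\e$), all bounded uniformly in $\e$; hence each is $\mathcal{O}\left(\e^{-1}\max(\beta^{j-1},\beta^j)\right)$, which dominates the $\mathcal{O}\left(\max(\beta^{j-1},\beta^j)\right)$ term from the previous step. Collecting everything gives $\frac{\partial M}{\partial h_j}=2(-1)^j+\mathcal{O}\left(\e^{-1}\max(\beta^{j-1},\beta^j)\right)$. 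The only point I would treat with care is the behavior at the extreme indices $j=1$ and $j=N+1$: there one flanking interval abuts an endpoint of $(0,1)$ and the relevant half-point value is $u^{\bm h}(h_{1/2})=u^{\bm h}(0)=\phi(0,2h_1,-1)$ (respectively the value at $x=1$), which is consistent with the formula above once the ghost conventions $h_0=-h_1$, $h_{N+2}=2-h_{N+1}$ and the induced $\beta^0$, $\beta^{N+1}$ are used. This bookkeeping with the boundary layers, rather than any genuine analytic difficulty, is the part I expect to be the main obstacle.
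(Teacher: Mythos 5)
Your proposal is correct and takes essentially the same approach as the paper's proof: differentiate under the integral sign, use Lemma \ref{lem:u^h_j} to reduce the computation to $-\int_{I_j}u^{\bm h}_x\,dx$ up to $\mathcal{O}\left(\e^{-1}\max(\beta^{j-1},\beta^j)\right)$ errors, and evaluate $u^{\bm h}(h_{j\pm1/2})$ via \eqref{eq:prop-uh} and the definition of $\beta_\pm$, which is precisely the paper's identity \eqref{eq:uh(h_1/2)}. The boundary bookkeeping you flag at $j=1$ and $j=N+1$ is handled in the paper by the ghost-point conventions $h_0=-h_1$, $h_{N+2}=2-h_{N+1}$, under which \eqref{eq:uh(h_1/2)} holds for all $j=0,\dots,N+1$, exactly as you indicate.
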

\begin{proof}
By differentiating the function $M(\bm h)$ with respect to $h_j$, and by using Lemma \ref{lem:u^h_j}, we infer
\begin{align*}
	\frac{\partial M}{\partial h_j}&=\int_0^1u^{\bm h}_j(x)\,dx=-\int_{I_j}u^{\bm h}_x(x)\,dx+\mathcal{O}\left(\e^{-1}\max(\beta^{j-1},\beta^{j})\right)\\
	&=u^{\bm h}(h_{j-1/2})-u^{\bm h}(h_{j+1/2})+\mathcal{O}\left(\e^{-1}\max(\beta^{j-1},\beta^{j})\right).
\end{align*}
for $j=1,\dots,N+1$. 
From \eqref{eq:prop-uh} and the definition of $\beta^j$, it follows that
\begin{equation}\label{eq:uh(h_1/2)}
	u^{\bm h}(h_{j+1/2})=(-1)^{j+1}+(-1)^{j}\beta^j, \qquad \qquad j=0,\dots,N+1.
\end{equation}
Therefore, we can conclude that
\begin{equation*}
	\frac{\partial M}{\partial h_j}=2(-1)^j+\mathcal{O}\left(\e^{-1}\max(\beta^{j-1},\beta^{j})\right),
\end{equation*}
and the proof is complete.
\end{proof}
The previous lemma shows that the manifold $\mathcal{M}$ can be parameterized by the first $N$ components $(h_1,\dots, h_N)$ of $\bm h$.
Indeed, if $u^{\bm h}\in\mathcal{M}$, applying Lemma \ref{lem:M(h)} and the implicit function theorem, 
we can think $h_{N+1}$ as a function of $(h_1,\dots, h_N)$, namely there exists $g:\mathbb{R}^N\rightarrow\mathbb R$ such that
\begin{equation*}
	h_{N+1}=g(h_1,\dots,h_N),
\end{equation*}
and we have
\begin{equation}\label{eq:N+1-der}
	\begin{aligned}
			\frac{\partial h_{N+1}}{\partial h_j}&=-\frac{\partial M/\partial h_j}{\partial M/\partial h_{N+1}}=
		-\frac{2(-1)^{j}+\mathcal{O}\left(\e^{-1}\max(\beta^{j-1},\beta^j)\right)}{2(-1)^{N+1}+\mathcal{O}\left(\e^{-1}\max(\beta^{N},\beta^{N+1})\right)}\\
		&=(-1)^{N-j}+\mathcal{O}\left(\e^{-1}\exp\left(-\frac{Al^{\bm h}}\e\right)\right),
	\end{aligned}
\end{equation}
where we used \eqref{eq:betaj}.
Hence, we introduce the new variable $\bm\xi$, consisting of the first $N$ components of $\bm h$, and we will denote $u^{\bm h}\in\mathcal{M}$ by $u^{\bm\xi}$.
Moreover, we denote by $\mathbf{G}:\mathbb{R}^N\rightarrow\mathbb{R}^{N+1}$ the function $\mathbf{G}(\bm\xi)=(\xi_1,\dots,\xi_N,g(\xi_1,\dots,\xi_N))$,
and in the following  we will interchangeably use $\bm\xi$ and $\bm h$, meaning $\bm h=\mathbf{G}(\bm\xi)$.
Finally, we have that
\begin{equation*}
	u^{\bm\xi}_j:=\frac{\partial u^{\bm\xi}}{\partial\xi_j}=\frac{\partial u^{\bm h}}{\partial h_j}+\frac{\partial u^{\bm h}}{\partial h_{N+1}}\frac{\partial h_{N+1}}{\partial h_j},
\end{equation*}
for $j=1,\dots,N$, and using \eqref{eq:uh_j} we get
\begin{equation}\label{eq:uxi_j}
	\e\|u^{\bm\xi}_j\|_{{}_{L^\infty}}+\e^{1/2} \|u_j^{\bm\xi}\|\leq C, \qquad \qquad j=1,\dots,N.
\end{equation}

Following the previous works  \cite{AlikBateFusc91, Bates-Xun1,Bates-Xun2} on the metastability for the classic Cahn--Hilliard equation \eqref{eq:CH},
we will consider an integrated version of \eqref{eq:hyp-CH}.
If $u$ is a solution to \eqref{eq:hyp-CH} with homogeneous Neumann boundary conditions \eqref{eq:Neumann} and initial data \eqref{eq:initial}, with $u_1$ satisfying \eqref{eq:ass-u1}, 
then $\ut(x,t):=\displaystyle\int_0^x u(y,t)\,dy$ satisfies the integrated hyperbolic Cahn--Hilliard equation
\begin{equation}\label{eq:integrated-CH}
	\tau \ut_{tt}+\ut_t=-\e^2\ut_{xxxx}+F'(\ut_x)_x, \qquad \qquad x\in(0,1), \; t>0,
\end{equation}
with initial data
\begin{equation*}
	\ut(x,0)=\ut_0(x), \qquad \ut_t(x,0)=\ut_1(x), \qquad \qquad x\in[0,1],
\end{equation*}
and Dirichlet boundary conditions
\begin{equation}\label{eq:bound-ut}
	\ut(0,t)=0, \quad \ut(1,t)=M, \qquad \ut_{xx}(0,t)=\ut_{xx}(1,t)=0, \qquad \forall\,t\geq0,
\end{equation}
where $M\in(-1,1)$ is the total mass of the solution.
Here and in all the paper we use the following notation: 
given a function $u:[0,1]\rightarrow\R$, we denote by $\ut:[0,1]\rightarrow\R$ the function
\begin{equation*}
	\ut(x):=\int_0^x u(y)\,dy.
\end{equation*}
Rewrite \eqref{eq:integrated-CH} as the system
\begin{equation}\label{eq:integrated-CH-sys}
	\begin{cases}
	\ut_t=\vt,\\
	\tau \vt_t=\mathcal{L}(\ut)-\vt,
	\end{cases}
\end{equation}
where we introduced the integrated Cahn--Hilliard differential operator 
$$
	\mathcal{L}(\ut):=-\e^2 \ut_{xxxx}+\left(F'(\ut_x)\right)_x.
$$
Observe that 
\begin{equation}\label{eq:AC-CH}
	\mathcal{L}(\ut)=-\frac{d}{dx}\mathcal{L}^{AC}(u),  
\end{equation}
where $\mathcal{L}^{AC}(u):=\e^2 u_{xx}-F'(u)$ is the Allen--Cahn differential operator introduced in the previous subsection.
Since $\mathcal{L}^{AC}(u^{\bm h})=0$ except in an $\e$--neighborhood of the transition points $h_j$ (see \eqref{eq:prop-uh}),
we have that the same property holds for $\mathcal{L}(\ut^{\bm\xi})$, namely
\begin{equation*}
	\mathcal{L}(\ut^{\bm\xi}(x))=0, \qquad \qquad \mbox{ for } |x-h_j|>\e, \qquad j=1,\dots,N+1.
\end{equation*}
More precisely, one can prove that (see \cite[Lemma 5.1]{Bates-Xun1})
\begin{equation}\label{eq:Lut^xi}
	\|\mathcal{L}(\ut^{\bm\xi})\|\leq C\e^{-1}\sum_{j=1}^{N+1}|\alpha^{j+1}-\alpha^j|\leq C\e^{-1}\exp\left(-\frac{Al^{\bm h}}{\e}\right),
\end{equation}
for some positive constant $C$. 
Hence, the $L^2$--norm of $\mathcal{L}(\ut^{\bm\xi})$ is exponentially small in $\e$.

We will study the dynamics of the solutions to \eqref{eq:hyp-CH}-\eqref{eq:Neumann} in a neighborhood of $\mathcal{M}_{{}_{0}}$ by considering the integrated version \eqref{eq:integrated-CH-sys} 
and using the decomposition $\ut=\ut^{\bm\xi}+\wt$, where $u^{\bm\xi}\in\mathcal{M}^{CH}$ is defined in \eqref{eq:uh} and $\wt\in H$ for
\begin{equation}\label{eq:H}
	H:=\left\{\wt\in H^4(0,1)\, : \, \wt=\wt_{xx}=0 \, \mbox{ at } x=0,1 \mbox{ and } \; \langle\wt,E_j^{\bm\xi}\rangle=0, \; \mbox{ for } j=1,\dots,N \right\},
\end{equation}
where $E_j^{\bm\xi}$ are linear functions of $\ut^{\bm h}_j$ and $\ut^{\bm h}_{j+1}$ to be determined later.
By using the formula of Lemma \ref{lem:u^h_j}, Proposition \ref{prop:alfa,beta} and Remark \ref{rem:alfa}, we obtain
\begin{equation*}
	 \ut_j^{\bm h}(x):=\int_0^xu^{\bm h}_j(y)\,dy=\begin{cases}
	 0, & x\leq h_{j-3/2},\\
	 \mathcal{O}(e^{-c/\e}), & x\in I_{j-1},\\
	 -u^{\bm h}(x)+u^{\bm h}(h_{j-1/2})+\mathcal{O}(e^{-c/\e}), \qquad \qquad &x\in I_j,\\
	 -u^{\bm h}(h_{j+1/2})+u^{\bm h}(h_{j-1/2})+\mathcal{O}(e^{-c/\e}), & x\geq h_{j+1/2},
	 \end{cases}
\end{equation*}
for $j=1,\dots, N$, and 
\begin{equation*}
	 \ut_{N+1}^{\bm h}(x):=\int_0^xu^{\bm h}_{N+1}(y)\,dy=\begin{cases}
	 0, & x\leq h_{N-1/2},\\
	  \mathcal{O}(e^{-c/\e}), & x\in I_{N},\\
	 -u^{\bm h}(x)+u^{\bm h}(h_{N+1/2})+\mathcal{O}(e^{-c/\e}), \qquad \qquad &x\in I_{N+1}.
	 \end{cases}
\end{equation*}
Here and in what follows $c$ is a generic positive constant independent on $\e$.
Using \eqref{eq:uh(h_1/2)}, we deduce
\begin{equation}\label{eq:ut-j-h}
	 \ut_j^{\bm h}(x)=\begin{cases}
	 0, & x\leq h_{j-3/2},\\
	 \mathcal{O}(e^{-c/\e}), & x\in I_{j-1},\\
	 -u^{\bm h}(x)+(-1)^j+\mathcal{O}(e^{-c/\e}), \qquad \qquad &x\in I_j,\\
	 2(-1)^j+\mathcal{O}(e^{-c/\e}), & x\geq h_{j+1/2},
	 \end{cases}
\end{equation}
for $j=1,\dots, N$, and 
\begin{equation}\label{eq:ut-N+1-h}
	 \ut_{N+1}^{\bm h}(x)=\begin{cases}
	 0, & x\leq h_{N-1/2},\\
	  \mathcal{O}(e^{-c/\e}), & x\in I_{N},\\
	 -u^{\bm h}(x)+(-1)^{N+1}+\mathcal{O}(e^{-c/\e}), \qquad \qquad &x\in I_{N+1}.
	 \end{cases}
\end{equation}
Since $\ut^{\bm\xi}_j=\ut_j^{\bm h}+\ut_{N+1}^{\bm h}\frac{\partial h_{N+1}}{\partial h_j}$, for \eqref{eq:N+1-der}, \eqref{eq:ut-j-h} and \eqref{eq:ut-N+1-h}, we get
\begin{equation}\label{eq:ut-j-xi}
	 \ut_j^{\bm \xi}(x)=\begin{cases}
	 0, & x\leq h_{j-3/2},\\
	  \mathcal{O}(e^{-c/\e}),  & x\in I_{j-1},\\
	 -u^{\bm\xi}(x)+(-1)^j+\mathcal{O}(e^{-c/\e}),  & x\in I_j,\\
	 2(-1)^j+\mathcal{O}(e^{-c/\e}),  & x\in[h_{j+1/2},h_{N+1/2}],\\
	 -u^{\bm\xi}(x)(-1)^{N-j}+(-1)^j+\mathcal{O}(e^{-c/\e}), \qquad & x\in I_{N+1},
	 \end{cases}
\end{equation}
for $j=1,\dots,N$.
Let $\omega_j:=\ut_j^{\bm h}+\ut_{j+1}^{\bm h}$, $j=1,\dots,N$; one has  
\begin{equation}\label{eq:omega_j}
	 \omega_j(x)=\begin{cases}
	 0, & x\leq h_{j-3/2},\\
	 \mathcal{O}(e^{-c/\e}), & x\in I_{j-1},\\
	 -u^{\bm\xi}(x)+(-1)^j+\mathcal{O}(e^{-c/\e}), \qquad \qquad &x\in I_j\cup I_{j+1},\\
	  \mathcal{O}(e^{-c/\e}), & x\geq h_{j+3/2}.
	 \end{cases}
\end{equation}
Then, the functions $\omega_j$ are either zero or exponentially small outside of $I_j\cup I_{j+1}$.
Now, we can define the functions $E^{\bm\xi}_j$ introduced above:
\begin{equation}\label{eq:Ej-Qj}
	E^{\bm\xi}_j(x):=\omega_j(x)-Q_j(x),
\end{equation}
where
\begin{equation*}
	Q_j(x):=\left(-\tfrac{1}{6}x^3+\tfrac12x^2-\tfrac13x\right)\omega''_j(0)+\tfrac16(x^3-x)\omega''_j(1)+x\omega_j(1).
\end{equation*}
As it was shown in \cite[Section 3, formula (54)]{Bates-Xun1}, the terms $\omega''_j(0)$, $\omega''_j(1)$ and $\omega_j(1)$ are exponentially small as $\e\to0^+$.
Hence, $Q_j$ are exponentially small functions introduced so that $E_j^{\bm\xi}$ satisfies
\begin{equation*}
	E^{\bm\xi}_j(x)=E^{\bm\xi}_{jxx}(x)=0, \qquad \mbox{ for } x=0,1, \qquad j=1,\dots,N.
\end{equation*}
The functions $E_j^{\bm\xi}$ are good approximations of the first $N$ eigenfunctions of the eigenvalue problem
\begin{align*}
	L^{\bm\xi} u:=&-\e^2u_{xxxx}+\left(F''(u^{\bm\xi})u_x\right)_x=\lambda u, \qquad & \mbox{ in } (0,1),\\
	&u(x)=u''(x)=0, \qquad & \mbox{ for } x=0,1,
\end{align*}
where $L^{\bm\xi}$ is the linearized operator of $\mathcal{L}$ about $\ut^{\bm\xi}$.
Indeed, in \cite{Bates-Xun1} it is proved that $L^{\bm\xi}$ has $N$ exponentially small eigenvalues and that all the others are bounded away from zero uniformly in $\e$ (see \cite[Theorem A]{Bates-Xun1}).
From \eqref{eq:omega_j} and the fact that $Q_j$ are exponentially small functions, we obtain 
\begin{equation}\label{eq:Ej}
	E^{\bm\xi}_j(x)=\begin{cases}
	-u^{\bm\xi}(x)+(-1)^j+\mathcal{O}(e^{-c/\e}), \qquad \qquad & x\in I_j\cup I_{j+1},\\
	 \mathcal{O}(e^{-c/\e}), & \mbox{otherwise},
	\end{cases}
\end{equation}
for $i=j,\dots,N$.

We conclude this section recalling that the existence of the coordinate system $\ut=\ut^{\bm\xi}+\wt$ with $\wt\in H$ in a neighborhood of $\mathcal{M}$ 
has been proved in \cite[Theorem A.7]{Bates-Xun2}.
For Lemma \ref{lem:M(h)} and the subsequent comments, in the following result we can use $\bm\xi$ and $\bm h$ interchangeably.
\begin{thm}\label{thm:existence-coord}
There exists $\rho_0>0$ such that if $\rho\in(0,\rho_0)$ and $\ut$ satisfies 
\begin{equation*}
	\ut(0)=\ut_{xx}(0)=\ut_{xx}(1)=0, \quad \ut(1)=M, \qquad  \mbox{and } \qquad \|\ut-\ut^{\bm k}\|_{{}_{L^\infty}}\leq \e^2
\end{equation*}
for some $\bm k\in\Omega_\rho$, then there is a unique $\bar{\bm{h}}\in\Omega_\rho$ such that
\begin{equation*}
	\ut=\ut^{\bar{\bm{h}}}+\wt, \qquad \mbox{ with } \qquad \langle\wt,E_j^{\bm\xi}\rangle=0, \;\; j=1,\dots,N.
\end{equation*}
Moreover, if $\|\ut-\ut^{\bm{h}^*}\|_{{}_{L^\infty}}=\inf\{ \|\ut-\ut^{\bm k}\|_{{}_{L^\infty}}\, :\, \bm k\in\Omega_\rho\}$ for some $\bm{h}^*\in\Omega_\rho$,
then there exists a positive constant $C$ such that
\begin{equation*}
	|\bar{\bm{h}}-\bm{h}^*|\leq C\|\ut-\ut^{\bm{h}^*}\|_{{}_{L^\infty}}, \qquad \mbox{ and } \qquad \|\ut-\ut^{\bar{\bm{h}}}\|_{{}_{L^\infty}}\leq C\|\ut-\ut^{\bm{h}^*}\|_{{}_{L^\infty}}.
\end{equation*}
\end{thm}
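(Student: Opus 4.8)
The plan is to turn the orthogonality requirement into a finite-dimensional equation for $\bm\xi$ and to solve it by a quantitative inverse function theorem. Since $\mathcal{M}$ is parameterized by $\bm\xi\in\mathbb{R}^N$ (Lemma \ref{lem:M(h)}) and every $\ut^{\bm\xi}$ carries mass $M$ by construction, for a fixed admissible $\ut$ I would introduce the map $\mathbf{F}\colon\Omega_\rho\to\mathbb{R}^N$,
\begin{equation*}
	F_j(\bm\xi):=\langle\ut-\ut^{\bm\xi},E_j^{\bm\xi}\rangle,\qquad j=1,\dots,N,
\end{equation*}
so that the desired decomposition $\ut=\ut^{\bar{\bm h}}+\wt$ with $\wt\in H$ is \emph{exactly} a zero of $\mathbf{F}$, upon setting $\bar{\bm h}=\mathbf{G}(\bar{\bm\xi})$. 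From \eqref{eq:Ej} one has $\|E_j^{\bm\xi}\|\le C$, so at any base point with $\|\ut-\ut^{\bm\xi}\|_{{}_{L^\infty}}\le\e^2$ the Cauchy--Schwarz inequality gives the smallness $|\mathbf{F}(\bm\xi)|\le C\e^2$.

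The heart of the argument is to prove that $D\mathbf{F}$ is invertible with $\|(D\mathbf{F})^{-1}\|\le C$ uniformly in $\e$ throughout the tube. Differentiation yields
\begin{equation*}
	\partial_{\xi_i}F_j=-\langle\ut^{\bm\xi}_i,E_j^{\bm\xi}\rangle+\langle\ut-\ut^{\bm\xi},\partial_{\xi_i}E_j^{\bm\xi}\rangle .
\end{equation*}
The second term is negligible: since $E_j^{\bm\xi}\approx-u^{\bm\xi}+(-1)^j$, its $\bm\xi$-derivatives inherit from \eqref{eq:uxi_j} the bound $\|\partial_{\xi_i}E_j^{\bm\xi}\|\le C\e^{-1/2}$, so this term is $O(\e^2\cdot\e^{-1/2})=O(\e^{3/2})$. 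This is precisely where the hypothesis $\|\ut-\ut^{\bm k}\|_{{}_{L^\infty}}\le\e^2$ enters: the $\e^{-1}$ growth of the profile derivatives is beaten by the $\e^2$ closeness. By \eqref{eq:ut-j-xi} and \eqref{eq:Ej}, the leading matrix $B_{ji}:=-\langle\ut^{\bm\xi}_i,E_j^{\bm\xi}\rangle$ coincides, up to exponentially small errors, with an explicit matrix having $O(1)$ entries, and I expect the \textbf{main obstacle} to be the verification that this limiting matrix has determinant bounded away from zero uniformly in $\e$ --- that is, that the tangent fields $\ut^{\bm\xi}_i$ are transverse to the slow test functions $E_j^{\bm\xi}$. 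This transversality is exactly the mechanism behind the coordinate system of \cite[Theorem A.7]{Bates-Xun2}, on which I would rely.

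Granting $\|B^{-1}\|\le C$, the Jacobian $D\mathbf{F}=B+O(\e^{3/2})$ is invertible with uniformly bounded inverse. Taking $\bm\xi^*$ to be the reduced coordinates of the manifold point $\bm h^*=\mathbf{G}(\bm\xi^*)$ nearest to $\ut$, so that $|\mathbf{F}(\bm\xi^*)|\le C\|\ut-\ut^{\bm h^*}\|_{{}_{L^\infty}}$, and using a uniform bound on the second derivatives of $\mathbf{F}$ on $\Omega_\rho$, the Newton--Kantorovich theorem (equivalently, a contraction argument for $\bm\xi\mapsto\bm\xi-B^{-1}\mathbf{F}(\bm\xi)$) produces a \emph{unique} zero $\bar{\bm\xi}$ of $\mathbf{F}$ in a ball of radius $C\e^2$ about $\bm\xi^*$, hence a unique $\bar{\bm h}=\mathbf{G}(\bar{\bm\xi})\in\Omega_\rho$; uniqueness throughout the admissible tube follows since $\mathbf{F}$ is injective wherever $D\mathbf{F}$ is invertible. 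Finally, the contraction estimate gives $|\bar{\bm\xi}-\bm\xi^*|\le C|\mathbf{F}(\bm\xi^*)|\le C\|\ut-\ut^{\bm h^*}\|_{{}_{L^\infty}}$, whence $|\bar{\bm h}-\bm h^*|\le C\|\ut-\ut^{\bm h^*}\|_{{}_{L^\infty}}$; the triangle inequality together with $\|\ut^{\bm h^*}-\ut^{\bar{\bm h}}\|_{{}_{L^\infty}}\le C|\bm h^*-\bar{\bm h}|$ (using $\|\ut^{\bm\xi}_j\|_{{}_{L^\infty}}\le C$ from \eqref{eq:ut-j-xi}) then yields the claimed bound $\|\ut-\ut^{\bar{\bm h}}\|_{{}_{L^\infty}}\le C\|\ut-\ut^{\bm h^*}\|_{{}_{L^\infty}}$.
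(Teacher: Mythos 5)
The paper offers no proof of this theorem: it is quoted from \cite[Theorem A.7]{Bates-Xun2}, so your proposal must stand as a self-contained reconstruction, and its local skeleton does essentially reconstruct the cited argument. Reducing to a zero of $F_j(\bm\xi)=\langle\ut-\ut^{\bm\xi},E_j^{\bm\xi}\rangle$ on the $N$-dimensional, mass-constrained parameter space is the right move (note that uniqueness can only be meant within the mass-$M$ slice $\mathcal{M}\cap\Omega_\rho$; with $N$ conditions and $N+1$ free parameters it would fail trivially), and the Jacobian you need is, up to sign, exactly the matrix $D(\bm\xi,\wt)$ with $\wt=\ut-\ut^{\bm\xi}$ that the paper inverts in the proof of Proposition \ref{prop:estimate-channel}: its leading part is the matrix of \eqref{eq:aij}, lower triangular with diagonal entries $4l_{j+1}\geq 4\delta$ by \eqref{eq:triangle}, hence invertible with uniformly bounded inverse. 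Two small repairs are needed: the off-structure errors in \eqref{eq:aij} are $\mathcal{O}(\e)$, not exponentially small (harmless for invertibility); and the hypotheses give only $\bm k,\bm{h}^*\in\Omega_\rho$, not points of $\mathcal{M}$, so before $\mathbf{F}$ can even be evaluated at the base point you must project onto the mass-$M$ slice --- since $|M(\bm k)-M|=|\ut^{\bm k}(1)-\ut(1)|\leq\e^2$, Lemma \ref{lem:M(h)} yields a point of $\mathcal{M}$ at distance $\mathcal{O}(\e^2)$, and your estimates survive this adjustment. With these fixes, the existence, the local uniqueness in an $\mathcal{O}(\e^2)$-ball, and both quantitative estimates follow as you describe.

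The genuine gap is the uniqueness claim, which you dispose of with ``uniqueness throughout the admissible tube follows since $\mathbf{F}$ is injective wherever $D\mathbf{F}$ is invertible.'' This fails twice over. First, pointwise invertibility of a Jacobian gives only local injectivity; it never implies injectivity on a domain. Second, and more seriously, your invertibility of $D\mathbf{F}$ is established only where $\|\ut-\ut^{\bm\xi}\|_{{}_{L^\infty}}\leq C\e^2$: the perturbation term obeys only $|\langle\ut-\ut^{\bm\xi},E^{\bm\xi}_{ji}\rangle|\leq C\e^{-1/2}\|\ut-\ut^{\bm\xi}\|_{{}_{L^\infty}}$, and once $\bm\xi$ is far from the base point this bound is merely $\mathcal{O}(\e^{-1/2})$, which can swamp the $\mathcal{O}(1)$ leading matrix; so $D\mathbf{F}$ is not known to be invertible on the rest of $\Omega_\rho$. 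The theorem, however, asserts uniqueness of $\bar{\bm h}$ among \emph{all} admissible points, with no smallness of $\ut-\ut^{\bar{\bm h}}$ assumed. Closing this requires a genuinely global estimate, extracted from the explicit shape of the profiles rather than from the inverse function theorem: one must show $\mathbf{F}(\bm\xi)\neq0$ whenever $|\bm\xi-\bm\xi^*|$ is not small. For instance, for $N=1$, writing $d=\xi-\xi^*$, replacing $\ut$ by $\ut^{\bm h^*}$ and the profiles by their piecewise-linear limits, \eqref{eq:Ej} gives at leading order
\begin{equation*}
	F_1\approx-2d\left(1+M-d\right)\ \ \mbox{for } 0\leq d\leq\tfrac{1+M}{2},
	\qquad
	F_1\approx-\tfrac{(1+M)^2}{2}\ \ \mbox{for } d\geq\tfrac{1+M}{2},
\end{equation*}
and symmetrically for $d<0$; hence $|F_1|$ is bounded below by $c\min\{|d|,1\}$ and the zero is indeed unique --- but this computation, and its $N$-dimensional analogue, is precisely the content of \cite[Theorem A.7]{Bates-Xun2} that your local argument does not reproduce.
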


\section{Slow dynamics in a neighborhood of the base manifold}\label{sec:slow}
The aim of this section is to study the dynamics of the solutions to \eqref{eq:hyp-CH}-\eqref{eq:Neumann} in a neighborhood of the manifold $\mathcal{M}_{{}_0}$
and to prove that $\mathcal{M}_{{}_0}$ is approximately invariant for \eqref{eq:hyp-CH}-\eqref{eq:Neumann}.
To do this, we will consider the integrated version \eqref{eq:integrated-CH}-\eqref{eq:bound-ut}. 
Since
\begin{equation}\label{eq:u-ut}
	\|\ut\|_{{}_{L^\infty}}\leq\|u\|_{{}_{L^\infty}},
\end{equation}
if $\|u-u^{\bm h}\|_{{}_{L^\infty}}$ is sufficiently small for some $\bm h\in\Omega_\rho$, we can use Theorem \ref{thm:existence-coord} and the decomposition $\ut=\ut^{\bm h}+\wt$ introduced in Section \ref{sec:prelimin}.

\subsection{Equations of motion and slow channel}
Let $(\ut,\vt)$ be a solution to \eqref{eq:integrated-CH-sys} with $\ut=\ut^{\bm\xi}+\wt$ and $\wt\in H$, where $H$ is the space defined in \eqref{eq:H};
it follows that the variables $(\wt,\vt)$ satisfy
\begin{equation*}
	\begin{cases}
	\wt_t=\vt-\displaystyle\sum_{j=1}^N\ut_j^{\bm\xi}\xi'_j,\\
	\tau\vt_t=\mathcal{L}(\ut^{\bm\xi}+\wt)-\vt.
	\end{cases}
\end{equation*}
Expanding we get
\begin{equation*}
	\mathcal{L}(u^{\bm\xi}+\wt)=\mathcal{L}(\ut^{\bm\xi})+L^{\bm\xi}\wt+(f_2\wt_x^2)_x,
	\qquad\textrm{ where }\quad
	f_2:=\int_0^1(1-s)F'''(\ut^{\bm\xi}_x+s\wt_x)\,ds,
\end{equation*}
and $L^{\bm\xi}$ is the linearized operator of $\mathcal{L}$ about $\ut^{\bm\xi}$, that is $L^{\bm\xi}\wt:=-\e^2\wt_{xxxx}+\left(F''(u^{\bm\xi})\wt_x\right)_x$.
Hence, we obtain the following system for $(\wt,\vt)$:
\begin{equation}\label{eq:w-v}
	\begin{cases}
	\wt_t=\vt-\displaystyle\sum_{j=1}^N\ut_j^{\bm\xi}\xi'_j,\\
	\tau\vt_t=\mathcal{L}(\ut^{\bm\xi})+L^{\bm\xi}\wt+(f_2\wt_x^2)_x-\vt.
	\end{cases}
\end{equation}
In order to obtain the equation for $\bm\xi=\bm\xi(t)$, we make use of the orthogonality condition
\begin{equation}\label{eq:orthogonality}
	\langle \wt,E^{\bm\xi}_j\rangle=0, \qquad \mbox{ for } j=1,\dots,N,
\end{equation}
where the functions $E_j^{\bm\xi}$ are defined in Section \ref{sec:prelimin} and satisfy \eqref{eq:Ej}.
By differentiating with respect to $t$ the conditions \eqref{eq:orthogonality} and by using the first equation of \eqref{eq:w-v}, we infer
\begin{equation}\label{eq:xi}
	\langle \vt,E^{\bm\xi}_j\rangle-\sum_{i=1}^N\langle\ut_i^{\bm\xi},E^{\bm\xi}_j\rangle\xi'_i+\sum_{i=1}^N\langle\wt,E^{\bm\xi}_{ji}\rangle\xi'_i=0,
	\qquad j=1,\dots,N,
\end{equation}
where we introduced the notation $E^{\bm\xi}_{ji}:=\partial_i E^{\bm\xi}_j$.
Rewrite \eqref{eq:xi} in the compact form
\begin{equation}\label{eq:xi-compact}
	D(\bm\xi,\wt)\bm\xi'=Y(\bm\xi,\vt),  
\end{equation}
where
\begin{equation*}
	D_{ji}(\bm\xi,\wt):=\langle\ut_i^{\bm\xi},E^{\bm\xi}_j\rangle-\langle\wt,E^{\bm\xi}_{ji}\rangle, \qquad \mbox{ and } \qquad Y_j(\bm\xi,\vt):=\langle \vt,E^{\bm\xi}_j\rangle.
\end{equation*}
Therefore, combining \eqref{eq:w-v} and \eqref{eq:xi-compact} we obtain the ODE-PDE coupled system
\begin{equation}\label{eq:w-v-xi}
	\begin{cases}
	\wt_t=\vt-\displaystyle\sum_{j=1}^N\ut_j^{\bm\xi}\xi'_j,\\
	\tau\vt_t=\mathcal{L}(\ut^{\bm\xi})+L^{\bm\xi}\wt+{(f_2\wt_x^2)}_x-\vt,\\
	D(\bm\xi,\wt)\bm\xi'=Y(\bm\xi,\vt).
	\end{cases}
\end{equation}
Now, let us define the slow channel where we will study the dynamics of \eqref{eq:w-v-xi}.
Let $\bm\xi$ such that $\bm h=\mathbf{G}(\bm\xi)\in\Omega_\rho$ and $\wt\in C^2(0,1)$ with $\wt=0$ at $x=0,1$; define
\begin{align*}
	A^{\bm\xi}(\wt)&:=-\langle L^{\bm\xi}\wt,\wt\rangle=\int_0^1\left[\e^2\wt_{xx}^2+F''(u^{\bm\xi})\wt^2_x\right]\,dx,\\
	B(\wt)&:=\int_0^1\left[\e^2\wt_{xx}^2+\wt^2_x\right]\,dx.
\end{align*}
We recall the following lemma of \cite{Bates-Xun1}.
\begin{lem}
For any $\wt\in C^2(0,1)$ with $\wt=0$ at $x=0,1$, we have
\begin{align}
	\|\wt\|_{{}_{L^\infty}}^2&\leq B(\wt), \label{eq:wt-B}\\
	\e\|\wt_x\|_{{}_{L^\infty}}^2&\leq(1+\e)B(\wt). \label{eq:wt_x-B}
\end{align}
Moreover, assume that $F$ satisfies \eqref{eq:ass-F}.
There exists $\rho_0>0$ such that if $\rho\in(0,\rho_0)$ and $\bm h=\mathbf{G}(\bm\xi)\in\Omega_\rho$, then for any $\wt$ as above and satisfying the orthogonality condition \eqref{eq:orthogonality}, we have
\begin{equation}\label{eq:A^xi-B}
	CA^{\bm\xi}(\wt)\geq\e^2B(\wt),
\end{equation}
for some positive constant $C$ independent of $\e$ and $\wt$.
\end{lem}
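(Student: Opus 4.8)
The plan is to handle the three inequalities separately: the first two are elementary one-dimensional Sobolev estimates, while the third is a coercivity (spectral gap) estimate that carries all the weight.

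For \eqref{eq:wt-B}, since $\wt(0)=0$ I would write $\wt(x)=\int_0^x\wt_x(s)\,ds$ and apply Cauchy--Schwarz together with $x\le 1$ to obtain $|\wt(x)|^2\le x\int_0^x\wt_x^2\,ds\le\int_0^1\wt_x^2\,ds\le B(\wt)$, the last step merely discarding the nonnegative term $\e^2\int_0^1\wt_{xx}^2$; taking the supremum gives the claim. For \eqref{eq:wt_x-B} I would start from the averaging identity $\wt_x(x)^2=\int_0^1\wt_x(y)^2\,dy+\int_0^1\bigl(\int_y^x 2\wt_x\wt_{xx}\,ds\bigr)dy$, which follows from integrating $(\wt_x^2)'=2\wt_x\wt_{xx}$. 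Multiplying by $\e$, estimating the inner integral by $\int_0^1 2|\wt_x||\wt_{xx}|\,ds$ and using Young in the form $2\e|\wt_x||\wt_{xx}|\le\wt_x^2+\e^2\wt_{xx}^2$, I get $\e\,\wt_x(x)^2\le\e\int_0^1\wt_x^2\,ds+B(\wt)\le(1+\e)B(\wt)$, since $\int_0^1\wt_x^2\le B(\wt)$.

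The substance is \eqref{eq:A^xi-B}. The first step is to read $A^{\bm\xi}(\wt)=-\langle L^{\bm\xi}\wt,\wt\rangle$ as a lower bound for the quadratic form of the linearized operator $L^{\bm\xi}$, and then to use the spectral information already recalled in Section \ref{sec:prelimin}: $L^{\bm\xi}$ has $N$ exponentially small eigenvalues, all the others are bounded away from zero uniformly in $\e$, and $E^{\bm\xi}_1,\dots,E^{\bm\xi}_N$ approximate the eigenfunctions of the slow subspace. Choosing $\rho_0$ small, a function $\wt$ satisfying the orthogonality condition \eqref{eq:orthogonality} has exponentially small projection onto that subspace, so the spectral gap yields the $L^2$--coercivity $A^{\bm\xi}(\wt)\ge c_0\|\wt\|^2$ with $c_0>0$ independent of $\e$. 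It then remains to upgrade this to the $B$--norm, and this is where the factor $\e^2$ is produced. Writing $a:=\int_0^1\wt_x^2$, $b:=\e^2\int_0^1\wt_{xx}^2$ and $n:=\|\wt\|^2$, the definition of $A^{\bm\xi}$ with $F''(u^{\bm\xi})\ge-M_0$ gives $b\le A^{\bm\xi}(\wt)+M_0\,a$, while the interpolation $a=-\langle\wt,\wt_{xx}\rangle\le\|\wt\|\,\|\wt_{xx}\|=\e^{-1}\sqrt{nb}$ combined with $n\le A^{\bm\xi}(\wt)/c_0$ and Young's inequality with an $\e$--weighted splitting yields $\e^2 a\le C A^{\bm\xi}(\wt)$; reinserting this into the bound for $b$ gives $\e^2 b\le C A^{\bm\xi}(\wt)$, hence $\e^2 B(\wt)=\e^2(a+b)\le C A^{\bm\xi}(\wt)$.

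The hard part is the $L^2$--coercivity $A^{\bm\xi}(\wt)\ge c_0\|\wt\|^2$: although I quote it from the spectral theorem of \cite{Bates-Xun1}, it is the genuinely delicate point, because $F''(u^{\bm\xi})$ is negative on the $\mathcal O(\e)$--wide layer regions, so $A^{\bm\xi}$ is indefinite without the constraint --- the direction $\wt$ with $\wt_x\approx u^{\bm\xi}_x$ makes $A^{\bm\xi}(\wt)$ nearly vanish while $B(\wt)$ stays of order $\e^{-1}$. Establishing the gap requires showing that orthogonality to the \emph{approximate} eigenfunctions $E^{\bm\xi}_j$, rather than the exact ones, still eliminates all $N$ slow directions up to an exponentially small error; this rests on the single-layer spectral analysis of Carr and Pego and on the almost-diagonal structure of the layer interactions, and is precisely the content underlying Theorem A of \cite{Bates-Xun1}.
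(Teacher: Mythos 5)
Your proposal is essentially correct, but bear in mind that the paper itself contains no proof of this lemma: it is recalled from \cite{Bates-Xun1}, with a pointer to Lemmas 4.1 and 4.2 there, so the comparison is with the cited argument rather than with anything in the text. Your proofs of \eqref{eq:wt-B} and \eqref{eq:wt_x-B} are correct and are the standard ones. For \eqref{eq:A^xi-B}, your splitting into (i) a uniform $L^2$--coercivity $A^{\bm\xi}(\wt)\ge c_0\|\wt\|^2$ for $\wt$ satisfying \eqref{eq:orthogonality}, and (ii) the interpolation $\|\wt_x\|^2\le\|\wt\|\,\|\wt_{xx}\|$ (legitimate here since $\wt=0$ at $x=0,1$) combined with $F''(u^{\bm\xi})\ge -M_0$ to convert $L^2$--control into control of $B(\wt)$ at the cost of the factor $\e^2$, is sound; step (ii) is a clean, checkable reduction that correctly identifies where the $\e^2$ comes from, and it is the same mechanism that underlies \cite[Lemma 4.2]{Bates-Xun1}. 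Two caveats. First, step (i) is quoted rather than proved: as you yourself note, all the real difficulty of the lemma sits there, so your proposal is no more self-contained than the paper's citation --- this is acceptable, since the spectral theorem (Theorem A of \cite{Bates-Xun1}) is established independently of this lemma, so no circularity arises. Second, your claim that orthogonality to the $E^{\bm\xi}_j$ leaves an \emph{exponentially small} projection onto the true slow eigenspace overstates what is available: the $E^{\bm\xi}_j$ are sharp-interface profiles corrected by the polynomials $Q_j$, and they approximate the genuine slow eigenfunctions only up to errors that are algebraically small in $\e$, not exponentially small. This does not damage the argument, because the spectral-gap reasoning only needs the subspace gap to be below a fixed threshold: if the slow component of $\wt$ has norm at most $\eta\|\wt\|$ with, say, $\eta\le 1/2$, the orthogonal spectral decomposition still gives $A^{\bm\xi}(\wt)\ge c_0(1-\eta^2)\|\wt\|^2-Ce^{-c/\e}\|\wt\|^2$, which suffices for step (ii). You should phrase the coercivity step in terms of such a fixed smallness of the subspace gap rather than invoking exponential smallness.
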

For the proof of this lemma see \cite[Lemmas 4.1 and 4.2]{Bates-Xun1}.
Let us define the energy functional
\begin{equation*}
	E^{\bm\xi}[\wt,\vt]:=\frac12A^{\bm\xi}(\wt)+\frac\tau2\|\vt\|^2+\e^\theta\tau\langle\wt,\vt\rangle, \qquad \qquad \mbox{ for } \, \theta>0,
\end{equation*}
and the slow channel
\begin{align*}
	\mathcal{Z}_{{}_{\Gamma,\rho}}:=\biggl\{(\ut,\vt)\,:\,\ut=\ut^{\bm\xi}+\wt,\;\; (\wt,\vt)\in H\times H^2(0,1),  \; \bm\xi \, &\mbox{ is such that }\, \bm h=\mathbf{G}(\bm\xi)\in\overline\Omega_\rho,\\
	& \qquad \mbox{and } E^{\bm{\xi}}[\wt,\vt]\leq\Gamma\e^{-2}\Psi(\bm h)\biggr\},
\end{align*}
for $\Gamma,\rho>0$, where the space $H$ and the barrier function $\Psi$ are defined in \eqref{eq:H} and \eqref{eq:barrier}, respectively.
Studying the dynamics of the solutions to \eqref{eq:integrated-CH-sys} in the slow channel $\mathcal{Z}_{{}_{\Gamma,\rho}}$ is equivalent to study the dynamics of the solutions to \eqref{eq:w-v-xi} in the set 
\begin{align*}
	\hat{\mathcal{Z}}_{{}_{\Gamma,\rho}}:=\biggl\{(\wt,\vt,\bm\xi) \, : \, (\wt,\vt)\in H\times H^2(0,1), \; \bm\xi \, &\mbox{ is such that }\,  \bm h=\mathbf{G}(\bm\xi)\in\overline\Omega_\rho,\\
	 &\qquad  \mbox{ and } E^{\bm{\xi}}[\wt,\vt]\leq\Gamma\e^{-2}\Psi(\bm h)\biggr\}.
\end{align*}
Hence, we will study the dynamics of \eqref{eq:w-v-xi} in the set $\hat{\mathcal{Z}}_{{}_{\Gamma,\rho}}$.
The first step is the following proposition, which gives estimates for solutions $(\wt,\vt,\bm\xi)\in\hat{\mathcal{Z}}_{{}_{\Gamma,\rho}}$ to \eqref{eq:w-v-xi}.
\begin{prop}\label{prop:estimate-channel}
Let us assume that $F\in C^4(\mathbb{R})$ satisfies conditions \eqref{eq:ass-F}.
Given $N\in\mathbb{N}$ and $\delta\in(0,1/(N+1))$, there exists $\e_0>0$ such that if $\e,\rho$ satisfy \eqref{eq:triangle}, $\theta>1$,
and $(\wt,\vt,\bm\xi)\in\hat{\mathcal{Z}}_{{}_{\Gamma,\rho}}$, then
\begin{align}
	A^{\bm\xi}(\wt)\leq C E^{\bm\xi}[\wt,\vt]\leq C\Gamma\e^{-2}\exp\left(-\frac{2Al^{\bm h}}{\e}\right), \label{eq:A^xi-E}\\
	\e^2\|\wt\|^2_{{}_{L^\infty}}+\tau\|\vt\|^2_{{}_{L^2}}\leq CE^{\bm\xi}[\wt,\vt]\leq C\Gamma\e^{-2}\exp\left(-\frac{2Al^{\bm h}}{\e}\right), \label{eq:wt-vt-E}
\end{align}
for some positive constant $C>0$ (independent of $\e$, $\tau$ and $\theta$).

Moreover, if $(\wt,\vt,\bm\xi)\in\hat{\mathcal{Z}}_{{}_{\Gamma,\rho}}$ is a solution to \eqref{eq:w-v-xi} for $t\in[0,T]$, then
\begin{equation}\label{eq:xi'-estimate}
	|\bm\xi'|_{{}_\infty}\leq C\e^{-2}\tau^{-1/2}\exp\left(-\frac{Al^{\bm h}}{\e}\right).
\end{equation}
\end{prop}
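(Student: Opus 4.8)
The plan is to establish the three displayed estimates in two stages: first the purely algebraic/functional bounds \eqref{eq:A^xi-E} and \eqref{eq:wt-vt-E}, which follow from the definition of the energy functional and the membership in $\hat{\mathcal{Z}}_{{}_{\Gamma,\rho}}$; and then the dynamical bound \eqref{eq:xi'-estimate}, which requires us to solve the linear system \eqref{eq:xi-compact} for $\bm\xi'$ and estimate the right-hand side $Y(\bm\xi,\vt)$.

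For the first stage, I would begin by controlling the cross term in $E^{\bm\xi}[\wt,\vt]$. Using Cauchy--Schwarz on $\langle\wt,\vt\rangle$ together with Young's inequality and the bounds \eqref{eq:wt-B}, \eqref{eq:A^xi-B}, one sees that $\e^\theta\tau|\langle\wt,\vt\rangle|$ can be absorbed (for $\theta>1$ and $\e$ small) into the two positive quadratic pieces $\tfrac12 A^{\bm\xi}(\wt)$ and $\tfrac\tau2\|\vt\|^2$. This simultaneously shows that $E^{\bm\xi}[\wt,\vt]$ is comparable to $A^{\bm\xi}(\wt)+\tau\|\vt\|^2$, yielding
\begin{equation*}
	A^{\bm\xi}(\wt)\leq CE^{\bm\xi}[\wt,\vt], \qquad \tau\|\vt\|^2\leq CE^{\bm\xi}[\wt,\vt].
\end{equation*}
The second inequality in \eqref{eq:A^xi-E} and \eqref{eq:wt-vt-E} is then immediate from the channel condition $E^{\bm\xi}[\wt,\vt]\leq\Gamma\e^{-2}\Psi(\bm h)$ together with the bound $\Psi(\bm h)=\sum(\alpha^{j+1}-\alpha^j)^2\leq C\exp(-2Al^{\bm h}/\e)$ coming from \eqref{eq:barrier} and \eqref{eq:alfaj}. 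Finally, for the $L^\infty$ bound on $\wt$ in \eqref{eq:wt-vt-E}, I would chain \eqref{eq:wt-B} with \eqref{eq:A^xi-B} to write $\e^2\|\wt\|^2_{{}_{L^\infty}}\leq\e^2 B(\wt)\leq CA^{\bm\xi}(\wt)\leq CE^{\bm\xi}[\wt,\vt]$.

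For the dynamical bound \eqref{eq:xi'-estimate}, the strategy is to invert $D(\bm\xi,\wt)$ in \eqref{eq:xi-compact}. The key structural fact is that the leading part of $D_{ji}$ comes from $\langle\ut_i^{\bm\xi},E^{\bm\xi}_j\rangle$, and using \eqref{eq:ut-j-xi}, \eqref{eq:Ej} one computes this inner product to be a near-diagonal matrix with $O(1)$ diagonal entries (the overlaps of $\ut_j^{\bm\xi}$ and $E^{\bm\xi}_j$ each contribute an $O(1)$ integral over $I_j\cup I_{j+1}$), up to exponentially small corrections; the perturbation $\langle\wt,E^{\bm\xi}_{ji}\rangle$ is controlled by \eqref{eq:wt-vt-E} and \eqref{eq:ineq-k}-type bounds on the derivatives $E^{\bm\xi}_{ji}$, hence is small. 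Thus $D(\bm\xi,\wt)$ is invertible with $|D^{-1}|\leq C$ uniformly in $\e$, giving $|\bm\xi'|_{{}_\infty}\leq C|Y(\bm\xi,\vt)|_{{}_\infty}$. It then remains to bound $Y_j=\langle\vt,E^{\bm\xi}_j\rangle\leq\|\vt\|\,\|E^{\bm\xi}_j\|\leq C\|\vt\|$, and inserting $\|\vt\|^2\leq C\tau^{-1}E^{\bm\xi}[\wt,\vt]\leq C\Gamma\tau^{-1}\e^{-2}\exp(-2Al^{\bm h}/\e)$ from \eqref{eq:wt-vt-E} produces $\|\vt\|\leq C\tau^{-1/2}\e^{-1}\exp(-Al^{\bm h}/\e)$, which combined with the extra $\e^{-1}$ from the inverse matrix estimate yields exactly \eqref{eq:xi'-estimate}.

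The main obstacle I anticipate is the uniform-in-$\e$ invertibility of $D(\bm\xi,\wt)$, i.e. the lower bound on its smallest singular value. One must show that the near-diagonal leading matrix $\langle\ut_i^{\bm\xi},E^{\bm\xi}_j\rangle$ is not merely diagonally dominant in a naive sense but genuinely bounded below, and that passing to the reduced $N$-dimensional variable $\bm\xi$ (rather than the full $\bm h$) via $\mathbf{G}$ does not degrade this—here the off-diagonal contributions from the tails $2(-1)^j$ of $\ut_j^{\bm\xi}$ in \eqref{eq:ut-j-xi} must be tracked carefully, since they are $O(1)$ rather than exponentially small and could in principle spoil invertibility if they conspire. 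This is precisely the computation carried out in \cite{Bates-Xun1} for the parabolic case, and I would adapt their argument, the only new ingredient being that $Y$ now involves $\vt$ rather than $\mathcal{L}(\ut^{\bm\xi})$, which is what introduces the $\tau^{-1/2}$ factor.
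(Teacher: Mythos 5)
Your proposal is correct and follows essentially the same route as the paper: the same Young-inequality absorption of the cross term $\e^\theta\tau\langle\wt,\vt\rangle$ (using $\theta>1$, $\e$ small, and the chain $\|\wt\|^2_{{}_{L^\infty}}\leq B(\wt)\leq C\e^{-2}A^{\bm\xi}(\wt)$) combined with the channel bound $E^{\bm\xi}[\wt,\vt]\leq\Gamma\e^{-2}\Psi(\bm h)\leq C\Gamma\e^{-2}\exp(-2Al^{\bm h}/\e)$, and then inversion of $D(\bm\xi,\wt)$ together with $|Y_j|\leq C\|\vt\|$ and \eqref{eq:wt-vt-E}. The only slip is your initial description of $(\langle\ut_i^{\bm\xi},E^{\bm\xi}_j\rangle)$ as near-diagonal with exponentially small corrections: as in \eqref{eq:aij} it is in fact lower triangular with $O(1)$ subdiagonal entries $\pm 4l_{i+1}$ and $O(\e)$ corrections, but this is exactly the Bates--Xun computation you defer to, whose explicit bidiagonal inverse (entries $1/(4l_j)$, with $l_j>\e/\rho$) yields the bound $|\bm\xi'|_{{}_\infty}\leq C\e^{-1}\|\vt\|$ that the paper uses.
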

\begin{proof}
Let us prove \eqref{eq:A^xi-E}. 
Using Young inequality, we infer
\begin{equation*}
	2\e^\theta|\langle\wt,\vt\rangle|\leq\e^{2\theta}\|\wt\|^2_{{}_{L^{\infty}}}+\|\vt\|^2,
\end{equation*}
and so, for the definitions of $A^{\bm\xi}$ and $E^{\bm\xi}$, we have
\begin{equation*}
	A^{\bm\xi}(\wt)=2E^{\bm\xi}[\wt,\vt]-\tau\|\vt\|^2-2\e^\theta\tau\langle\wt,\vt\rangle\leq 2E^{\bm\xi}[\wt,\vt]+\e^{2\theta}\tau\|\wt\|^2_{{}_{L^{\infty}}}.
\end{equation*}
Using \eqref{eq:wt-B} and \eqref{eq:A^xi-B}, we deduce 
\begin{equation*}
	\|\wt\|^2_{{}_{L^{\infty}}}\leq B(\wt)\leq C\e^{-2} A^{\bm\xi}(\wt),
\end{equation*}
and then
\begin{equation*}
	A^{\bm\xi}(\wt)\leq 2E^{\bm\xi}[\wt,\vt]+C\e^{2(\theta-1)}\tau A^{\bm\xi}(\wt).
\end{equation*}
Since $\theta>1$, we can choose $\e_0$ so small that $C\e^{2(\theta-1)}\tau\leq\nu<1$ for $\e\in(0,\e_0)$, and conclude that
\begin{equation*}
	A^{\bm\xi}(\wt)\leq CE^{\bm\xi}[\wt,\vt].
\end{equation*}
The second inequality of \eqref{eq:A^xi-E} follows from the facts that $E^{\bm\xi}[\wt,\vt]\leq\Gamma\e^{-2}\Psi(\bm h)$ in $\hat{\mathcal{Z}}_{{}_{\Gamma,\rho}}$ and that
the definition of the barrier function $\Psi$ \eqref{eq:barrier} and \eqref{eq:alfaj} imply that
\begin{equation}\label{eq:Psi<exp}
	\Psi(\bm h)\leq C\exp\left(-\frac{2Al^{\bm h}}{\e}\right),
\end{equation}
for some $C>0$ independent of $\e$.
The proof of \eqref{eq:wt-vt-E} is similar.
From \eqref{eq:A^xi-B} and Young inequality, one has
\begin{equation*}
	E^{\bm\xi}[\wt,\vt]\geq C\e^2B(\wt)+\frac\tau2\|\vt\|^2-\e^{2\theta}\tau\|\wt\|^2-\frac\tau4\|\vt\|^2.
\end{equation*}
Hence, for \eqref{eq:wt-B} we get
\begin{equation*}
	E^{\bm\xi}[\wt,\vt]\geq C\e^2\|\wt\|^2_{{}_{L^\infty}}+\frac\tau4\|\vt\|^2-\e^{2\theta}\tau\|\wt\|^2_{{}_{L^\infty}}\geq \left(C-\e^{2(\theta-1)}\tau\right)\e^2\|\wt\|^2_{{}_{L^\infty}}+\frac\tau4\|\vt\|^2,
\end{equation*}
and we obtain \eqref{eq:wt-vt-E} choosing $\e$ sufficiently small (again since $\theta>1$) and using \eqref{eq:Psi<exp}.

It remains to prove \eqref{eq:xi'-estimate}.
Let us consider the equation for $\bm\xi$ in \eqref{eq:w-v-xi} and the matrix $D(\bm\xi,\wt)$ of elements $D_{ij}(\bm\xi,\wt):=\langle\ut_j^{\bm\xi},E^{\bm\xi}_i\rangle-\langle\wt,E^{\bm\xi}_{ij}\rangle$.
These elements have been already studied in \cite{Bates-Xun1,Bates-Xun2} and one has
\begin{equation}\label{eq:aij}
	a_{ij}:=\langle\ut_j^{\bm\xi},E_i\rangle=\begin{cases}
	(-1)^{i+j}4l_{j+1}+\mathcal{O}(\e), \qquad \qquad & i\geq j,	\\
	\mathcal{O}(\e), & i\leq j,
	\end{cases}
\end{equation}
where $l_j:=h_j-h_{j-1}$ is the distance between the layers (see formulas (4.27) in \cite{Bates-Xun2}), and $\|E^{\bm\xi}_{ij}\|\leq C\e^{-1/2}$.
These results can be obtained by using the formulas \eqref{eq:ut-j-xi} and \eqref{eq:Ej} for $\ut_j^{\bm\xi}$ and $E^{\bm\xi}_j$.
In particular, the bound for $\|E^{\bm\xi}_{ij}\|$ can be easily obtained by differentiating with respect to $\xi_j$ the formula \eqref{eq:Ej} 
without the exponentially small terms and by using \eqref{eq:uxi_j}.
From \eqref{eq:wt-vt-E}, it follows that
\begin{equation*}
	|\langle\wt,E^{\bm\xi}_{ji}\rangle|\leq\|\wt\|\|E^{\bm\xi}_{ji}\|\leq C\e^{-5/2}\exp\left(-\frac{Al^{\bm h}}{\e}\right).
\end{equation*}
Hence, for $\e$ sufficiently small, we have 
\begin{equation*}
	D(\bm\xi,\wt):=\left(\begin{array}{ccccc} 4l_2 & 0 & 0 & \dots & 0\\
	-4l_3 & 4l_3 & 0 & \dots & 0\\
	4l_4 & -4l_4 & 4l_4 & \dots & 0\\
	\dots & \dots & \dots & \dots & \dots\\
	(-1)^{N-1}4l_{N+1} & (-1)^{N-2}4l_{N+1} & (-1)^{N-3}4l_{N+1} & \dots & 4l_{N+1}
	\end{array}\right)+\mathcal{O}(\e),
\end{equation*}
and its inverse
\begin{equation*}
	D^{-1}(\bm\xi,\wt)=\left(\begin{array}{cccccc} \displaystyle{\frac1{4l_2}} & 0 & 0 & \dots & 0 & 0\\
	\displaystyle{\frac1{4l_2}} & \displaystyle{\frac1{4l_3}} & 0 & \dots & 0 & 0\\
	0 & \displaystyle\frac1{4l_3} & \displaystyle\frac1{4l_4} & \dots & 0 & 0\\
	\dots & \dots & \dots & \dots & \dots\\
	0 & 0 & 0 & \dots &  \displaystyle\frac1{4l_N} &  \displaystyle\frac1{4l_{N+1}}\\
	\end{array}\right)+\mathcal{O}(\e).
\end{equation*}
Let us rewrite the equation for $\bm\xi$ in \eqref{eq:w-v-xi} as 
\begin{equation*}
	\bm\xi'=D^{-1}(\bm\xi,\wt)Y(\bm\xi,\vt).
\end{equation*}
Since
\begin{equation*}
	|Y_j(\bm\xi,\vt)|=|\langle \vt,E^{\bm\xi}_j\rangle|\leq\|\vt\|\|E^{\bm\xi}_j\|\leq C\|\vt\|,
\end{equation*}
where in the last passage we used the formula \eqref{eq:Ej} for $E^{\bm\xi}_j$,
we deduce
\begin{equation*}
	|\bm\xi'|_{{}_{\infty}}\leq C\|D^{-1}(\bm\xi,\wt)\|_{{}_{\infty}}\|\vt\|,
\end{equation*}
where $\|\cdot\|_{{}_{\infty}}$ denotes the matrix norm induced by the vector norm $|\cdot|_{{}_{\infty}}$.
To estimate such matrix norm, we use the assumption $\bm h\in\Omega_\rho$, which implies $l_j>\e/\rho$ for any $j$.
Therefore, we can conclude that
\begin{equation}\label{eq:xi-vt}
	|\bm\xi'|_{{}_{\infty}}\leq C\e^{-1}\|\vt\|,
\end{equation}
and the proof of \eqref{eq:xi'-estimate} follows from \eqref{eq:wt-vt-E}.
\end{proof}

\subsection{Main result}
Thanks to the estimates \eqref{eq:A^xi-E}, \eqref{eq:wt-vt-E} and \eqref{eq:xi'-estimate}, we can state that if $(\wt,\vt,\bm\xi)\in\hat{\mathcal{Z}}_{{}_{\Gamma,\rho}}$ is a solution to \eqref{eq:w-v-xi} in $[0,T]$,
then the $L^\infty$--norm of $\wt$, the $L^2$--norm of $\vt$ and the velocity of $\bm\xi$ are exponentially small in $\e$.
This implies that if $u=u^{\bm h}+w$ is a solution to \eqref{eq:hyp-CH} such that $(\ut,\ut_t)\in\mathcal{Z}_{{}_{\Gamma,\rho}}$ for $t\in[0,T]$,
then the $L^\infty$--norm of $w$ and the velocity of the transition points $(h_1,\dots,h_{N+1})$ are exponentially small.
Indeed, to estimate the norm of $w$, we use \eqref{eq:wt_x-B}, \eqref{eq:A^xi-B} and \eqref{eq:A^xi-E} and we get
\begin{equation*}
	\|w(\cdot,t)\|_{{}_{L^\infty}}=\|\wt_x(\cdot,t)\|_{{}_{L^\infty}}\leq C\e^{-1/2} B(\wt)^{1/2}\leq C\e^{-3/2} A^{\bm\xi}(\wt)^{1/2}\leq C\e^{-5/2}\exp\left(-\frac{Al^{\bm h}}{\e}\right),
\end{equation*}
for some $C>0$ independent of $\e$.
On the other hand, the velocity of the transition points $(h_1,\dots,h_N)$ is exponentially small for \eqref{eq:xi'-estimate} and the fact that $h_i=\xi_i$ for $i=1,\dots,N$;
to estimate the velocity of $h_{N+1}$, we use \eqref{eq:N+1-der} and the relation
\begin{equation}\label{eq:h'_N+1}
	h'_{N+1}=\sum_{j=1}^N\frac{\partial h_{N+1}}{\partial h_j}h'_j=\sum_{j=1}^N\left[(-1)^{N-j}+\mathcal{O}\left(\e^{-1}\exp\left(-\frac{Al^{\bm h}}\e\right)\right)\right]h'_j.
\end{equation}
From \eqref{eq:h'_N+1} and \eqref{eq:xi'-estimate} it follows that
\begin{equation*}
	|h'_{N+1}(t)|\leq C\e^{-2}\tau^{-1/2}\exp\left(-\frac{Al^{\bm h}}{\e}\right).
\end{equation*}
Therefore, we can state that if $u=u^{\bm h}+w$ is a solution to \eqref{eq:hyp-CH} such that $(\ut,\ut_t)\in\mathcal{Z}_{{}_{\Gamma,\rho}}$ for $t\in[0,T]$, then
\begin{equation*}
	\|u-u^{\bm h}\|_{{}_{L^\infty}}\leq C\e^{-5/2}\exp\left(-\frac{Al^{\bm h}}{\e}\right), \qquad \qquad |\bm h'|_{{}_\infty}\leq C\e^{-2}\tau^{-1/2}\exp\left(-\frac{Al^{\bm h}}{\e}\right),
\end{equation*}
for $t\in[0,T]$.
In other words, there exists a neighborhood of the manifold $\mathcal{M}_{{}_0}$ where the solution $u$ to \eqref{eq:hyp-CH}-\eqref{eq:Neumann}
is well approximated by $u^{\bm h}$; thus, $u$ is a function with $N+1$ transitions between $-1$ and $+1$,
and the velocity of the transition points is exponentially small.
Let us focus the attention on a lower bound of the time $T_\e$ taken for the solution to leave such neighborhood of $\mathcal{M}_{{}_0}$. 
To this aim, we observe that a solution can leave the slow channel $\mathcal{Z}_{{}_{\Gamma,\rho}}$ either if $\bm h=\mathbf{G}(\bm\xi)\in\partial\Omega_\rho$,
meaning that two transition points are close enough, namely $h_j-h_{j-1}=\e/\rho$ for some $j$, 
or if the energy functional is large enough, precisely $E^{\bm{\xi}}[\wt,\vt]=\Gamma\e^{-2}\Psi(\bm h)$.
We will prove that solutions leave the slow channel only if two transition points are close enough;
then, since the transition points move with exponentially small velocity, the time taken for the solution to leave the slow channel is exponentially large.
Precisely, we will prove the following result.
\begin{thm}\label{thm:main}
Let us assume that $F\in C^4(\mathbb{R})$ satisfies conditions \eqref{eq:ass-F} and consider the IBVP \eqref{eq:hyp-CH}-\eqref{eq:Neumann}-\eqref{eq:initial} with $u_1$ satisfying \eqref{eq:ass-u1}.
Given $N\in\mathbb{N}$ and $\delta\in(0,1/(N+1))$, there exist $\e_0,\theta_0>0$ and $\Gamma_2>\Gamma_1>0$ such that if $\e,\rho$ satisfy \eqref{eq:triangle}, $\theta>\theta_0$,
$\Gamma\in[\Gamma_1,\Gamma_2]$, and the initial datum $(u_0,u_1)$ is such that
\begin{equation*}
	(\ut_0,\ut_1)\in\,\stackrel{\circ}{\mathcal{Z}}_{{}_{\Gamma,\rho}}=\bigl\{(\ut,\vt)\in\mathcal{Z}_{{}_{\Gamma,\rho}}\, : \, {\bm h}=\mathbf{G}(\bm\xi)\in\Omega_\rho
	\;\;\textrm{and}\;\; E^{\bm\xi}[w,v]<\Gamma\e^{-2}\Psi({\bm h})\bigr\},
\end{equation*}
then the solution $(u,u_t)$ is such that $(\ut,\ut_t)$ remains in $\mathcal{Z}_{{}_{\Gamma,\rho}}$ for a time $T_\varepsilon>0$, and for any $t\in[0,T_\varepsilon]$ one has
\begin{equation}\label{eq:u-uh,h'}
	\|u-u^{\bm h}\|_{{}_{L^\infty}}\leq C\e^{-5/2}\exp\left(-\frac{Al^{\bm h}}{\e}\right), \qquad \qquad |\bm h'|_{{}_\infty}\leq C\e^{-2}\tau^{-1/2}\exp\left(-\frac{Al^{\bm h}}{\e}\right),
\end{equation}
where $A:=\sqrt{\min\{F''(-1),F''(+1)\}}$, $\ell^{\bm h}:=\min\{h_j-h_{j-1}\}$ and $|\cdot|_{{}_{\infty}}$ denotes the maximum norm in $\mathbb{R}^N$.
Moreover, there exists $C>0$ such that
\begin{equation*}
		T_\varepsilon\geq C\e^2\tau^{1/2}(\ell^{\bm h(0)}-\varepsilon/\rho)\exp(A\delta /\varepsilon).
\end{equation*}
\end{thm}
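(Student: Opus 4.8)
The plan is to run a maximal--time (continuation) argument. Let $T_\e$ be the largest time for which the solution, written as $\ut=\ut^{\bm\xi}+\wt$ through the coordinate decomposition of Theorem \ref{thm:existence-coord}, stays in $\mathcal{Z}_{{}_{\Gamma,\rho}}$, i.e. for which $(\wt,\vt,\bm\xi)\in\hat{\mathcal{Z}}_{{}_{\Gamma,\rho}}$ solves \eqref{eq:w-v-xi}; since the initial datum lies in the open channel, $T_\e>0$. While the solution remains in $\mathcal{Z}_{{}_{\Gamma,\rho}}$ the two bounds in \eqref{eq:u-uh,h'} are immediate: the first is the computation of $\|u-u^{\bm h}\|_{{}_{L^\infty}}=\|\wt_x\|_{{}_{L^\infty}}$ carried out before the statement via \eqref{eq:wt_x-B}, \eqref{eq:A^xi-B} and \eqref{eq:A^xi-E}, and the second is \eqref{eq:xi'-estimate} of Proposition \ref{prop:estimate-channel} combined with \eqref{eq:h'_N+1} for $h'_{N+1}$. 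A solution can leave $\mathcal{Z}_{{}_{\Gamma,\rho}}$ only in two ways: either $\bm h=\mathbf{G}(\bm\xi)$ reaches $\partial\Omega_\rho$ (two layers, or a layer and an endpoint, come within distance $\e/\rho$), or the energy saturates the barrier, $E^{\bm\xi}[\wt,\vt]=\Gamma\e^{-2}\Psi(\bm h)$. The heart of the proof is to exclude the second possibility, so that the only admissible exit is a near--collision of layers.

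To this end I would derive a differential inequality for $t\mapsto E^{\bm\xi}[\wt,\vt]$ along \eqref{eq:w-v-xi}. Differentiating the three pieces of $E^{\bm\xi}$ and inserting the evolution equations, the indefinite cross term $\langle L^{\bm\xi}\wt,\vt\rangle$ produced by $\tfrac12\frac{d}{dt}A^{\bm\xi}(\wt)$ cancels against the one arising from $\tau\langle\vt_t,\vt\rangle$, while the term $\e^\theta\langle\wt,L^{\bm\xi}\wt\rangle=-\e^\theta A^{\bm\xi}(\wt)$ generated by $\frac{d}{dt}\bigl(\e^\theta\tau\langle\wt,\vt\rangle\bigr)$ supplies the otherwise missing coercivity in the $\wt$--direction. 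After these cancellations one is left, schematically, with
\begin{equation*}
	\frac{d}{dt}E^{\bm\xi}[\wt,\vt]\leq -\tfrac14\|\vt\|^2-\tfrac12\e^\theta A^{\bm\xi}(\wt)+\langle\mathcal{L}(\ut^{\bm\xi}),\vt\rangle+\mathcal{R},
\end{equation*}
where $\mathcal{R}$ collects the nonlinear contribution $\langle(f_2\wt_x^2)_x,\vt\rangle$, the forcing into $\wt$, the terms proportional to $\bm\xi'$, and $\frac{d}{dt}\bigl(\Gamma\e^{-2}\Psi(\bm h)\bigr)$. Here $\theta>\theta_0$ is taken large enough that $E^{\bm\xi}$ is equivalent to $\tfrac12A^{\bm\xi}(\wt)+\tfrac\tau2\|\vt\|^2$ and that the coupling $\e^\theta\langle\wt,\vt\rangle$ is absorbed by the dissipation. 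The key quantitative input is the forcing bound $\|\mathcal{L}(\ut^{\bm\xi})\|^2\leq C\e^{-2}\Psi(\bm h)$, which follows from \eqref{eq:Lut^xi}, the definition \eqref{eq:barrier} of $\Psi$ and Cauchy--Schwarz; every contribution to $\mathcal{R}$ is quadratic in quantities already exponentially small on $\hat{\mathcal{Z}}_{{}_{\Gamma,\rho}}$ by \eqref{eq:wt-vt-E} and \eqref{eq:xi'-estimate}, so $\mathcal{R}$ is negligible compared with $\e^{-2}\Psi(\bm h)$.

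The main obstacle is precisely the mismatch of scales in this inequality: the damping acts on $\|\vt\|^2$ at rate $O(1)$ but on $A^{\bm\xi}(\wt)$ only at the weak rate $O(\e^\theta)$, whereas the forcing $\langle\mathcal{L}(\ut^{\bm\xi}),\vt\rangle$ is itself of order $\e^{-2}\Psi(\bm h)$, the very scale of the barrier. This is the genuinely hyperbolic difficulty, absent from the parabolic analysis of \cite{Bates-Xun1,Bates-Xun2}, where the $\wt$--dissipation is of order one. To close the estimate one must exploit that the forcing enters \emph{only} through $\vt$, which is dissipated at rate one: keeping the $\|\vt\|^2$--absorption in $\langle\mathcal{L}(\ut^{\bm\xi}),\vt\rangle$ sharp, and using that $\frac{d}{dt}\bigl(\Gamma\e^{-2}\Psi(\bm h)\bigr)$ is super--exponentially small because both $\nabla\Psi$ and $\bm h'$ are exponentially small, one shows that on the barrier the right--hand side stays strictly below $\frac{d}{dt}\bigl(\Gamma\e^{-2}\Psi(\bm h)\bigr)$ provided $\Gamma$ is confined to a suitable interval $[\Gamma_1,\Gamma_2]$: the lower bound $\Gamma_1$ guarantees that the barrier lies above the $\vt$--driven forced state, so the energy cannot be pushed across it, while the upper bound $\Gamma_2$ preserves the error estimates and the validity of the decomposition of Theorem \ref{thm:existence-coord}. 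Consequently $E^{\bm\xi}[\wt,\vt]$ never reaches the barrier while $\bm h\in\Omega_\rho$, and the first exit from $\mathcal{Z}_{{}_{\Gamma,\rho}}$ must occur at $\partial\Omega_\rho$.

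Finally, the lower bound on $T_\e$ is a direct consequence of the velocity estimate. While $\bm h=\mathbf{G}(\bm\xi)\in\Omega_\rho$ one has $l^{\bm h}>\e/\rho>\delta$ by \eqref{eq:triangle}, so \eqref{eq:u-uh,h'} yields $|\bm h'(t)|_{{}_\infty}\leq C\e^{-2}\tau^{-1/2}\exp(-A\delta/\e)$ uniformly in $t\in[0,T_\e]$; since each gap $h_j-h_{j-1}$ can shrink at rate at most $2|\bm h'|_{{}_\infty}$, the minimal distance $l^{\bm h}$ needs a time at least
\begin{equation*}
	T_\e\geq\frac{\ell^{\bm h(0)}-\e/\rho}{2\sup_{t}|\bm h'(t)|_{{}_\infty}}\geq C\e^2\tau^{1/2}\left(\ell^{\bm h(0)}-\e/\rho\right)\exp\left(\frac{A\delta}{\e}\right)
\end{equation*}
to descend from $\ell^{\bm h(0)}$ to the collision threshold $\e/\rho$, which is the asserted estimate.
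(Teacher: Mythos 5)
Your proposal is correct and follows essentially the same route as the paper: the bounds \eqref{eq:u-uh,h'} are quoted from Proposition \ref{prop:estimate-channel} and the computation preceding the theorem, the exclusion of the energy exit is precisely the paper's Proposition \ref{prop:energy-estimate} (same cancellation of the cross terms $\langle L^{\bm\xi}\wt,\vt\rangle$, same coercivity supplied by the coupling $\e^\theta\tau\langle\wt,\vt\rangle$, same forcing bound $\|\mathcal{L}(\ut^{\bm\xi})\|^2\leq C\e^{-2}\Psi(\bm h)$, same confinement $\Gamma\in[\Gamma_1,\Gamma_2]$), and the lower bound on $T_\e$ is the identical velocity-integration argument from $\ell^{\bm h(0)}$ down to the collision threshold $\e/\rho$. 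The only cosmetic difference is the final deduction: the paper integrates the linear differential inequality \eqref{eq:E-Psi} in the Gr\"onwall form $\frac d{dt}\bigl\{\exp(\eta\e^\mu t)\bigl(E^{\bm{\xi}}[\wt,\vt]-\Gamma\e^{-2}\Psi(\bm h)\bigr)\bigr\}\leq0$, which keeps the gap strictly negative without needing the strict inequality at the barrier that your first-crossing formulation implicitly requires.
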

The proof of Theorem \ref{thm:main} is based on the following proposition, which gives an estimate on the time derivative of $E^{\bm\xi}[\wt,\vt]$ along the solutions to the system \eqref{eq:w-v-xi}.
\begin{prop}\label{prop:energy-estimate}
Let us assume that $F\in C^4(\mathbb{R})$ satisfies conditions \eqref{eq:ass-F}.
Given $N\in\mathbb{N}$ and $\delta\in(0,1/(N+1))$, there exist $\e_0,\theta_0>0$ and $\Gamma_2>\Gamma_1>0$ such that if $\e,\rho$ satisfy \eqref{eq:triangle}, $\theta>\theta_0$,
$\Gamma\in[\Gamma_1,\Gamma_2]$, and $(\wt,\vt,\bm\xi)\in\hat{\mathcal{Z}}_{{}_{\Gamma,\rho}}$ is a solution to \eqref{eq:w-v-xi} for $t\in[0,T]$, then for some $\eta\in(0,1)$ and $\mu>0$, we have
\begin{equation}\label{eq:E-Psi}
	\frac d{dt}\bigl\{E^{\bm{\xi}}[\wt,\vt]-\Gamma\e^{-2}\Psi(\bm h)\bigr\} \leq
	-\eta\e^\mu\bigl\{E^{\bm{\xi}}[\wt,\vt]-\Gamma\e^{-2}\Psi(\bm h)\bigr\}, \qquad \qquad \mbox{for } t\in[0,T].
\end{equation}
\end{prop}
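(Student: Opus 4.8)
The plan is to differentiate the energy $E^{\bm\xi}[\wt,\vt]$ along a solution of the coupled system \eqref{eq:w-v-xi} and to read off a differential inequality of the announced form. First I would compute $\frac{d}{dt}E^{\bm\xi}[\wt,\vt]$ term by term. The derivative of $\tfrac12 A^{\bm\xi}(\wt)$ produces $-\langle L^{\bm\xi}\wt,\wt_t\rangle$ (after two integrations by parts, the boundary contributions vanishing because $\wt=\wt_{xx}=0$, hence $\wt_t=0$, at $x=0,1$) together with a term $\tfrac12\int_0^1 F'''(u^{\bm\xi})u^{\bm\xi}_t\wt_x^2\,dx$ coming from the $\bm\xi$--dependence of $A^{\bm\xi}$; the derivative of $\tfrac\tau2\|\vt\|^2$ gives $\langle\vt,\tau\vt_t\rangle$, and that of the cross term $\e^\theta\tau\langle\wt,\vt\rangle$ gives $\e^\theta\tau\langle\wt_t,\vt\rangle+\e^\theta\langle\wt,\tau\vt_t\rangle$. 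Substituting $\wt_t$ and $\tau\vt_t$ from \eqref{eq:w-v-xi} and using the self-adjointness of $L^{\bm\xi}$, the two terms $\mp\langle L^{\bm\xi}\wt,\vt\rangle$ cancel and $\langle\wt,L^{\bm\xi}\wt\rangle=-A^{\bm\xi}(\wt)$. The outcome is an identity of the schematic form
\begin{equation*}
	\frac{d}{dt}E^{\bm\xi}[\wt,\vt]=-(1-\e^\theta\tau)\|\vt\|^2-\e^\theta A^{\bm\xi}(\wt)-\e^\theta\langle\wt,\vt\rangle+\mathcal{F}+\mathcal{N}+\mathcal{T},
\end{equation*}
where $\mathcal{F}:=\langle\vt+\e^\theta\wt,\mathcal{L}(\ut^{\bm\xi})\rangle$ is the forcing, $\mathcal{N}:=\langle\vt+\e^\theta\wt,(f_2\wt_x^2)_x\rangle$ collects the nonlinear contributions, and $\mathcal{T}:=\sum_j\langle L^{\bm\xi}\wt,u^{\bm\xi}_j\rangle\xi'_j-\e^\theta\tau\sum_j\langle u^{\bm\xi}_j,\vt\rangle\xi'_j+\tfrac12\int_0^1 F'''(u^{\bm\xi})\,u^{\bm\xi}_t\,\wt_x^2\,dx$ collects the transport terms.

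The second step is to show that the three explicit terms constitute a genuine dissipation comparable to $E^{\bm\xi}$. Controlling $\e^\theta|\langle\wt,\vt\rangle|$ by Young's inequality and using $\|\wt\|^2\le B(\wt)\le C\e^{-2}A^{\bm\xi}(\wt)$ from \eqref{eq:wt-B} and \eqref{eq:A^xi-B}, together with $\e^\theta\tau<1$ for $\e$ small, the dissipative block is bounded above by $-\tfrac12\|\vt\|^2-\tfrac12\e^\theta A^{\bm\xi}(\wt)$ up to terms of relative order $\e^{2(\theta-1)}\tau$. Next, Proposition \ref{prop:estimate-channel} (the comparison $A^{\bm\xi}(\wt)\le CE^{\bm\xi}[\wt,\vt]$ and $\tau\|\vt\|^2\le CE^{\bm\xi}[\wt,\vt]$) and the reverse estimate $E^{\bm\xi}[\wt,\vt]\le C\bigl(A^{\bm\xi}(\wt)+\tau\|\vt\|^2\bigr)$ (valid once $\theta\ge 2$, again by \eqref{eq:wt-B}--\eqref{eq:A^xi-B}) yield $-\tfrac12\|\vt\|^2-\tfrac12\e^\theta A^{\bm\xi}(\wt)\le -c\,\e^\theta E^{\bm\xi}[\wt,\vt]$ for some $c>0$. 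This is what fixes the decay rate: I would take $\mu=\theta$ and $\eta\in(0,\min\{c,1\})$.

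The crux, and the main obstacle, is the forcing $\mathcal{F}$. By \eqref{eq:Lut^xi} one only has $\|\mathcal{L}(\ut^{\bm\xi})\|^2\le C\e^{-2}\Psi(\bm h)$, so a direct Cauchy--Schwarz/Young bound of $\langle\vt,\mathcal{L}(\ut^{\bm\xi})\rangle$ leaves a residual of size $C\e^{-2}\Psi(\bm h)$, i.e. of exactly the barrier scale $\Gamma\e^{-2}\Psi(\bm h)$; since the hyperbolic dissipation is only of order $\e^\theta$, this crude estimate is too lossy to close the inequality with $\Gamma$ in a fixed range, in contrast with the parabolic case of \cite{Bates-Xun1}. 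To recover the missing smallness I would exploit the \emph{structure} of $\mathcal{L}(\ut^{\bm\xi})$ rather than its size. Writing $\vt=\wt_t+\sum_j u^{\bm\xi}_j\xi'_j$ splits $\mathcal{F}$ into $\langle\wt_t,\mathcal{L}(\ut^{\bm\xi})\rangle$ and $\sum_j\xi'_j\langle u^{\bm\xi}_j,\mathcal{L}(\ut^{\bm\xi})\rangle$. For the second piece I would use $\mathcal{L}(\ut^{\bm\xi})=-\partial_x\mathcal{L}^{AC}(u^{\bm\xi})$ from \eqref{eq:AC-CH} to integrate by parts, $\langle u^{\bm\xi}_j,\mathcal{L}(\ut^{\bm\xi})\rangle=\langle (u^{\bm\xi}_j)_x,\mathcal{L}^{AC}(u^{\bm\xi})\rangle$, and the fact that $\mathcal{L}^{AC}(u^{\bm\xi})$ is supported in the $\e$--cutoff regions around the midpoints $h_{j+1/2}$, where $u^{\bm\xi}$ is flat and $(u^{\bm\xi}_j)_x\approx -u^{\bm\xi}_{xx}$ is exponentially small; this pairing is therefore of higher order than $\Psi(\bm h)^{1/2}$, and combined with $|\bm\xi'|_\infty\le C\e^{-2}\tau^{-1/2}\Psi(\bm h)^{1/2}$ from \eqref{eq:xi'-estimate} it is $O(\Psi(\bm h)^{3/2})$ up to algebraic powers of $\e$. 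For the first piece I would use the orthogonality \eqref{eq:orthogonality}: since $\wt\perp E^{\bm\xi}_j$ while $\mathcal{L}(\ut^{\bm\xi})$ is, modulo an exponentially smaller transverse part (a consequence of the construction of $\mathcal{M}$ and the spectral gap of $L^{\bm\xi}$, cf. \cite{Bates-Xun1}), carried by the slow eigenfunctions approximated by the $E^{\bm\xi}_j$, differentiating \eqref{eq:orthogonality} gives $\langle\wt_t,E^{\bm\xi}_j\rangle=-\sum_i\xi'_i\langle\wt,E^{\bm\xi}_{ji}\rangle$, which with $\|E^{\bm\xi}_{ji}\|\le C\e^{-1/2}$ and the channel bounds \eqref{eq:wt-vt-E}, \eqref{eq:xi'-estimate} is again $O(\Psi(\bm h)^{3/2})$. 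The term $\e^\theta\langle\wt,\mathcal{L}(\ut^{\bm\xi})\rangle$ is treated likewise, the orthogonality killing its leading part and the factor $\e^\theta$ absorbing the remainder into the $\e^\theta A^{\bm\xi}(\wt)$ dissipation. In this way $\mathcal{F}$ is shown to be of higher order in the exponentially small $\Psi(\bm h)^{1/2}$, hence negligible against $\e^{\theta-2}\Psi(\bm h)$.

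The remaining terms are lower order and routine. The nonlinear block $\mathcal{N}$ is quadratic in the exponentially small $\wt$ (using $\e\|\wt_x\|_{{}_{L^\infty}}^2\le(1+\e)B(\wt)$ from \eqref{eq:wt_x-B} and \eqref{eq:A^xi-E}, \eqref{eq:wt-vt-E}) and is estimated after one integration by parts; the transport block $\mathcal{T}$ is a product of the exponentially small fluctuations $\wt,\vt$ with $\bm\xi'$, controlled by Lemma \ref{lem:u^h_j}, \eqref{eq:uxi_j}, \eqref{eq:wt-vt-E} and \eqref{eq:xi'-estimate}; both are negligible against $\e^{\theta-2}\Psi(\bm h)$. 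Finally I would bound the barrier derivative $\Gamma\e^{-2}\tfrac{d}{dt}\Psi(\bm h)=\Gamma\e^{-2}\sum_j\partial_{\xi_j}\Psi(\bm h)\,\xi'_j$: by \eqref{eq:barrier} and the derivative asymptotics of Proposition \ref{prop:alfa,beta} one has $\partial_{\xi_j}\Psi(\bm h)=O(\e^{-1}\Psi(\bm h))$, so with \eqref{eq:xi'-estimate} this is $O(\e^{-5}\tau^{-1/2}\Psi(\bm h)^{3/2})$, again negligible. Collecting these estimates, all non-dissipative contributions sum to at most $\tfrac{c}{2}\,\e^{\theta}\Gamma\e^{-2}\Psi(\bm h)$ for $\e$ small (here $\Gamma\ge\Gamma_1>0$ is used so that the fixed higher-order remainders are dominated), whence
\begin{equation*}
	\frac{d}{dt}\big\{E^{\bm{\xi}}[\wt,\vt]-\Gamma\e^{-2}\Psi(\bm h)\big\}\le -c\,\e^{\theta}\big\{E^{\bm{\xi}}[\wt,\vt]-\Gamma\e^{-2}\Psi(\bm h)\big\}.
\end{equation*}
Choosing $\mu=\theta$, $\eta\in(0,\min\{c,1\})$, $\theta_0\ge 2$ large enough for the forcing estimates, and $[\Gamma_1,\Gamma_2]$ with $\Gamma_2$ small enough that the channel bounds of Proposition \ref{prop:estimate-channel} and the validity of the coordinate decomposition persist, one obtains \eqref{eq:E-Psi}. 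The decisive difficulty throughout is the forcing $\mathcal{F}$: unlike the parabolic case, the dissipation is only of order $\e^\theta$, so one cannot afford the $O(\e^{-2}\Psi(\bm h))$ loss of the naive bound and must extract the extra exponential smallness from the orthogonality and from the integrated structure $\mathcal{L}=-\partial_x\mathcal{L}^{AC}$.
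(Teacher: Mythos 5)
Your energy identity and the treatment of the dissipative block are correct and coincide with the paper's computation (the same cancellation of $\pm\langle L^{\bm\xi}\wt,\vt\rangle$, the same $-\e^\theta A^{\bm\xi}(\wt)$ produced by the cross term). The proof breaks down exactly where you locate the ``decisive difficulty'': the forcing $\mathcal{F}$. Your claim that $\mathcal{F}=O(\Psi(\bm h)^{3/2})$ rests on three assertions, each of which fails. First, the support statement is backwards: by \eqref{eq:prop-uh}, $\mathcal{L}^{AC}(u^{\bm h})$ vanishes for $|x-h_j|\geq\e$, so it is concentrated in $\e$--neighborhoods of the \emph{transition points} $h_j$, where $u^{\bm\xi}_x\sim\e^{-1}$ and $u^{\bm\xi}_{xx}\sim\e^{-2}$, not near the midpoints $h_{j+1/2}$ where the profile is flat. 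Second, and consequently, the pairing $\langle\ut_j^{\bm\xi},\mathcal{L}(\ut^{\bm\xi})\rangle$ is of order $\alpha^{j+1}-\alpha^j\sim\Psi(\bm h)^{1/2}$, not of higher order; this is precisely the paper's computation \eqref{eq:L,Ei}, and it must be so, since these pairings are what drive the layer ODEs \eqref{eq:h_i-CahnHilliard} --- were they $O(\Psi)$, the layers would not move at the derived rate. Multiplying by $|\bm\xi'|_{{}_\infty}\leq C\e^{-2}\tau^{-1/2}\Psi^{1/2}$ gives a contribution $O(\e^{-2}\tau^{-1/2}\Psi)$: the same order in $\Psi$ as the naive Young bound you reject. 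Third, the spectral claim that $\mathcal{L}(\ut^{\bm\xi})$ is carried by the $E_j^{\bm\xi}$ ``modulo an exponentially smaller transverse part'' is unsupported and false: by \eqref{eq:Lut^xi} one only knows $\|\mathcal{L}(\ut^{\bm\xi})\|\leq C\e^{-1}\Psi^{1/2}$ (sharp spikes of $-\partial_x\mathcal{L}^{AC}(u^{\bm\xi})$ at the $h_j$), while its components along the plateau-like functions $E_j^{\bm\xi}$ are only $O(\Psi^{1/2})$; the transverse part may thus be algebraically \emph{larger} than the tangential one, not exponentially smaller. So your $\mathcal{F}$ is genuinely of size $\mathrm{poly}(\e^{-1})\Psi$, and by your own accounting (dissipation of order $\e^\theta$ only, $\Gamma$ in a fixed range with ``$\Gamma_2$ small''), the inequality cannot close.

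The missing idea is that the naive bound \emph{is} affordable, because the inequality to be proven concerns the difference $E^{\bm\xi}[\wt,\vt]-\Gamma\e^{-2}\Psi(\bm h)$, not $E^{\bm\xi}$ itself. The paper keeps the order-one damping dissipation $-\eta\|\vt\|^2$ (coming from the $u_t$ term, not from the $\e^\theta$ cross term), bounds the forcing by $\tfrac12\|\vt\|^2+C\|\mathcal{L}(\ut^{\bm\xi})\|^2$ with residual $C\e^{-2}\Psi(\bm h)$, and observes that when $-\eta\e^\mu E^{\bm\xi}$ is rewritten as $-\eta\e^\mu\bigl\{E^{\bm\xi}-\Gamma\e^{-2}\Psi\bigr\}-\eta\e^\mu\Gamma\e^{-2}\Psi$, the last term is available to dominate that residual. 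Together with the barrier derivative, which the paper bounds by $\eta\|\vt\|^2+C\Gamma^2\exp(-c/\e)\Psi$ using $|\bm\xi'|_{{}_\infty}\leq C\e^{-1}\|\vt\|$ and absorbs with the reserved $\|\vt\|^2$ dissipation (your alternative bound of this term via \eqref{eq:xi'-estimate} is also valid, but it does not repair the forcing step), this reduces \eqref{eq:E-Psi} to the scalar condition
\begin{equation*}
	C\exp(-c/\e)\Gamma^2-\eta\e^{\mu-2}\Gamma+C\e^{-2}\leq 0,
\end{equation*}
whose solutions form the admissible interval $[\Gamma_1,\Gamma_2]$. Note that this forces $\Gamma$ to be \emph{large} enough --- a lower bound $\Gamma_1$, so that the barrier coefficient dominates the forcing residual --- and merely smaller than an exponentially large $\Gamma_2$; your requirement that ``$\Gamma_2$ be small enough'' points in the opposite direction and is incompatible with absorbing the forcing at all.
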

\begin{proof}
In all the proof, symbols $C, c, \eta$ denote generic positive constants, independent on $\varepsilon$, and with $\eta\in(0,1)$.
Let us differentiate with respect to $t$ the three terms of the energy functional $E^{\bm\xi}$.
For the first term, direct differentiation and the first equation of \eqref{eq:w-v-xi} give
\begin{equation*}
	\begin{aligned}
	\frac{d}{dt}\left\{\frac12 A^{\bm\xi}(\wt)\right\} & =-\frac{d}{dt}\left\{\frac12\langle L^{\bm\xi}\wt,\wt\rangle\right\}
	=-\langle L^{\bm\xi}\wt,\wt_t\rangle+\frac12\langle{(F''(u^{\bm \xi}))}_t,\wt_x^2\rangle\\
	& =-\langle L^{\bm\xi}\wt,\vt\rangle+\sum_{j=1}^N\xi'_j\langle L^{\bm\xi}\wt,\ut_j^{\bm\xi}\rangle
		+\frac12\sum_{j=1}^N\xi'_j\langle F'''(u^{\bm \xi})\ut_j^{\bm\xi},\wt_x^2\rangle.
	\end{aligned}
\end{equation*}
Using the self-adjointness of the operator $L^{\bm\xi}$ and inequality \eqref{eq:xi-vt}, we infer
\begin{equation*}
	\sum_{j=1}^N|\xi'_j\langle L^{\bm\xi}\wt,\ut_j^{\bm\xi}\rangle|= \sum_{j=1}^N|\xi'_j\langle \wt,L^{\bm\xi}\ut_j^{\bm\xi}\rangle| \leq C\e^{-1}\|\vt\|\|\wt\|\max_j\|L^{\bm\xi}\ut_j^{\bm\xi}\|.
%	& \leq\eta\|\vt\|^2+C\e^{-2}\|\wt\|_{{}_{\infty}}^2\max_j\|L^{\bm\xi}\ut_j^{\bm\xi}\|^2	\\
%	& \leq \eta\|\vt\|^2+C\e^{-4}A^{\bm\xi}(\wt)\max_j\|L^{\bm\xi}\ut_j^{\bm\xi}\|^2.
\end{equation*}
For the last term of the latter inequality, we have that
\begin{align*}
	L^{\bm\xi}\ut_j^{\bm\xi} & =L^{\bm\xi}\ut_j^{\bm h}+\frac{\partial h_{N+1}}{\partial h_j}L^{\bm\xi}\ut_{N+1}^{\bm h}
	=\frac{\partial}{\partial h_j}\mathcal{L}(\ut^{\bm h})+\frac{\partial h_{N+1}}{\partial h_j}\frac{\partial}{\partial h_{N+1}}\mathcal{L}(\ut^{\bm h})\\
	& = -\frac{\partial}{\partial h_j}\frac{\partial}{\partial x}\mathcal{L}^{AC}(u^{\bm h})-\frac{\partial h_{N+1}}{\partial h_j}\frac{\partial}{\partial h_{N+1}}\frac{\partial}{\partial x}\mathcal{L}^{AC}(u^{\bm h}),
\end{align*}
and from  \cite[Lemma 5.2]{Bates-Xun1}, it follows that
\begin{equation*}
	\|L^{\bm\xi}\ut_j^{\bm\xi}\|\leq C\e^{-4}\exp\left(-\frac{Al^{\bm h}}{2\e}\right)\leq C\e^{-4}\exp\left(-\frac{A\delta}{2\e}\right),
\end{equation*}
where we used \eqref{eq:triangle}.
On the other hand, the formula \eqref{eq:ut-j-xi} and the inequalities \eqref{eq:wt_x-B}, \eqref{eq:A^xi-B} and \eqref{eq:xi-vt} yield  
\begin{align*}
	\left|\sum_{j=1}^N\xi'_j\langle F'''(u^{\bm \xi})\ut_j^{\bm\xi},\wt_x^2\rangle\right| & \leq C|\bm\xi|_{{}_{\infty}}\|\wt_x\|_{{}_{L^\infty}}\|\wt_x\|^2\max_j\|\ut_j^{\bm\xi}\|\leq C\e^{-2}B(\wt)\|\vt\|\\
	& \leq C\e^{-4}A^{\bm\xi}(\wt)\|\vt\|.
\end{align*}
Therefore, for the first term of the energy we conclude
\begin{equation}\label{eq:first-energy}
	\frac{d}{dt}\left\{\frac12 A^{\bm\xi}(\wt)\right\}\leq-\langle L^{\bm\xi}\wt,\vt\rangle+C\e^{-5}\exp(-c/\e)\|\vt\|\|\wt\|+C\e^{-4}A^{\bm\xi}(\wt)\|\vt\|.
\end{equation}
For what concerns the second term in the energy $E^{\bm\xi}$, the second equation of \eqref{eq:w-v-xi} gives
\begin{equation*}
	\begin{aligned}
	\frac{d}{dt}\Bigl\{\frac\tau2\|\vt\|^2\Bigr\}& =\langle \tau v_t,v\rangle
		=\langle\mathcal{L}(\ut^{\bm\xi})+L^{\bm\xi}\wt+{(f_2\wt^2_x)}_x-\vt,\vt\rangle\\
	&\leq\langle L^{\bm\xi}\wt,\vt\rangle+\|\mathcal{L}(\ut^{\bm\xi})\|\|v\|+\langle{(f_2\wt^2_x)}_x,\vt\rangle-\|\vt\|^2\\
	&\leq \langle L^{\bm\xi}\wt,\vt\rangle-\frac12\|\vt\|^2+C\|\mathcal{L}(\ut^{\bm\xi})\|^2+\langle{(f_2\wt^2_x)}_x,\vt\rangle.
	\end{aligned}
\end{equation*}
By expanding
\begin{align*}
	{(f_2\wt^2_x)}_x & = {(f_2)}_x\wt^2_x+2f_2\wt_x\wt_{xx}\\
	& = \wt^2_x\int_0^1(1-s)F''''(\ut_x^{\bm\xi}+s\wt_x)(\ut^{\bm\xi}_{xx}+s\wt_{xx})\,ds+2f_2\wt_x\wt_{xx},
\end{align*}
we deduce the estimate
\begin{equation}\label{eq:f2wx}
	\begin{aligned}
	|\langle{(f_2\wt^2_x)}_x,\vt\rangle| & \leq\|{(f_2\wt^2_x)}_x\|\|\vt\|\leq C\left(\|\wt_x\|^2_{{}_{L^\infty}}\|\ut^{\bm\xi}_{xx}+\wt_{xx}\|+\|\wt_x\|_{{}_{L^\infty}}\|\wt_{xx}\|\right)\|\vt\|\\
	& \leq C\left\{\e^{-1}B(\wt)\left(\e^{-1}+\e^{-1}B(\wt)^{1/2}\right)+\e^{-3/2}B(\wt)\right\}\|\vt\|,\\
	& \leq C\left\{\e^{-4}+\e^{-5}A^{\bm\xi}(\wt)^{1/2}\right\}\|\vt\|A^{\bm\xi}(\wt),
	\end{aligned}
\end{equation}
where the estimates \eqref{eq:wt_x-B}, \eqref{eq:A^xi-B},
\begin{equation*}
	\|\ut^{\bm\xi}_{xx}\|=\|u^{\bm\xi}_x\|\leq C\e^{-1}, \qquad \mbox{ and } \qquad \|\wt_{xx}\|^2\leq\e^{-2}B(\wt).
\end{equation*}
have been used.
Hence, we obtain
\begin{equation}\label{eq:second-energy}
	\frac{d}{dt}\Bigl\{\frac\tau2\|\vt\|^2\Bigr\}\leq\langle L^{\bm\xi}\wt,\vt\rangle-\frac12\|\vt\|^2+C\|\mathcal{L}(\ut^{\bm\xi})\|^2+C\left\{1+\e^{-1}A^{\bm\xi}(\wt)^{1/2}\right\}\e^{-4}\|\vt\|A^{\bm\xi}(\wt).
\end{equation}
Finally, the time derivative of the scalar product $\langle w,\tau v\rangle$ can be bounded by
\begin{equation*}
	\begin{aligned}
	\frac{d}{dt}\langle \wt,\tau\vt\rangle %& =\langle w_t,\tau v\rangle+\langle w,\tau v_t\rangle\\
	&=\tau\|\vt\|^2-\tau\sum_{j=1}^N\xi'_j\langle\ut_j^{\bm\xi},\vt\rangle+\langle\wt,\mathcal{L}(\ut^{\bm\xi})+L^{\bm\xi}\wt+{(f_2\wt_x^2)}_x-\vt\rangle\\
	&\leq\tau\|\vt\|^2+\tau|\bm\xi'|_{{}_{\infty}}\|\ut_j^{\bm\xi}\|\|\vt\|+\|\wt\|\|\mathcal{L}(\ut^{\bm\xi})\|-A^{\bm\xi}(\wt)-\langle\wt,\vt\rangle+\langle\wt,{(f_2\wt_x^2)}_x\rangle.
	\end{aligned}
\end{equation*}
where we used that $A^{\bm\xi}(\wt)=-\langle\wt,L^{\bm\xi}\wt\rangle$.
By using \eqref{eq:ut-j-xi}, \eqref{eq:xi-vt} and estimating as in \eqref{eq:f2wx}, we infer
\begin{equation*}
	\begin{aligned}
		\e^\theta\frac{d}{dt}\langle \wt,\tau\vt\rangle\leq-\e^\theta A^{\bm\xi}(\wt)+\e^\theta\tau(1+C\e^{-1})&\|\vt\|^2-\frac{\e^\theta}\tau\langle \wt,\tau\vt\rangle+\e^\theta\|\wt\|\|\mathcal{L}(\ut^{\bm\xi})\|\\
		&+C\left\{1+\e^{-1}A^{\bm\xi}(\wt)^{1/2}\right\}\e^{\theta-4}\|\wt\|A^{\bm\xi}(\wt).
	\end{aligned}
\end{equation*}
For Young inequality, we have
\begin{equation*}
	\e^\theta\|\wt\|\|\mathcal{L}(\ut^{\bm\xi})\|\leq C\e^{2\theta}\|\wt\|^2+C\|\mathcal{L}(\ut^{\bm\xi})\|^2\leq C\e^{2(\theta-1)}A^{\bm\xi}(\wt)+C\|\mathcal{L}(\ut^{\bm\xi})\|^2,
\end{equation*}
and we can estimate the third term of $E^{\bm\xi}$ as
\begin{equation}\label{eq:third-energy}
	\begin{aligned}
		\e^\theta\frac{d}{dt}\langle \wt,\tau\vt\rangle\leq&-\e^\theta A^{\bm\xi}(\wt)-\frac{\e^\theta}\tau\langle \wt,\tau\vt\rangle+\e^\theta\tau(1+C\e^{-1})\|\vt\|^2+C\|\mathcal{L}(\ut^{\bm\xi})\|^2\\
		&+C\e^{2(\theta-1)}A^{\bm\xi}(\wt)+C\left\{1+\e^{-1}A^{\bm\xi}(\wt)^{1/2}\right\}\e^{\theta-4}\|\wt\|A^{\bm\xi}(\wt).
	\end{aligned}
\end{equation}
Collecting \eqref{eq:first-energy}, \eqref{eq:second-energy} and \eqref{eq:third-energy}, we deduce 
\begin{equation*}
	\begin{aligned}
	\frac{d}{dt}E^{\bm\xi}[\wt,\vt] \leq &-\e^\theta A^{\bm\xi}(\wt)-\left(\frac12-\e^\theta\tau(1+C\e^{-1})\right)\|\vt\|^2-\frac{\e^\theta}\tau\langle \wt,\tau\vt\rangle+C\|\mathcal{L}(\ut^{\bm\xi})\|^2\\
	&+C\e^{2(\theta-1)}A^{\bm\xi}(\wt)+R^{\bm\xi}[\wt,\vt],
	\end{aligned}
\end{equation*}
where
\begin{align*}
	R^{\bm\xi}[\wt,\vt]:= C\e^{-5}\exp(-c/\e)\|\vt\|&\|\wt\|+C\e^{-4}A^{\bm\xi}(\wt)\|\vt\|\\
	&+C\e^{-4}\left\{1+\e^{-1}A^{\bm\xi}(\wt)^{1/2}\right\}\left\{\|\vt\|+\e^{\theta}\|\wt\|\right\}A^{\bm\xi}(\wt).
\end{align*}
Choosing $\theta>2$ and $\e_0$ so small that
\begin{equation}\label{eq:cond-eps-tau}
	C\e^{\theta-1}\tau\leq\frac12-\eta,
\end{equation}
for any $\e\in(0,\e_0)$, we obtain
\begin{equation*}
	\frac{d}{dt}E^{\bm\xi}[\wt,\vt] \leq -\eta\e^\theta A^{\bm\xi}(\wt)-\eta\|\vt\|^2-\frac{\e^\theta}\tau\langle \wt,\tau\vt\rangle+C\|\mathcal{L}(\ut^{\bm\xi})\|^2+R^{\bm\xi}[\wt,\vt].
\end{equation*}
Therefore, we conclude that there exists $\mu>0$ (independent on $\e$) such that
\begin{equation*}
	\frac{d}{dt}E^{\bm\xi}[\wt,\vt] \leq -\eta\e^\mu E^{\bm\xi}[\wt,\vt]-\eta\|\vt\|^2+C\|\mathcal{L}(\ut^{\bm\xi})\|^2+R^{\bm\xi}[\wt,\vt],
\end{equation*}
for some $\eta\in(0,1)$. 
Indeed, the condition \eqref{eq:cond-eps-tau} implies $\e^\theta/\tau>C\e^{2\theta-1}$ and, since $\theta>2$, we can choose $\mu\geq2\theta-1$.

Now, let us use that $(\wt,\vt,\bm\xi)\in\hat{\mathcal{Z}}_{{}_{\Gamma,\rho}}$ for $t\in[0,T]$;
from Proposition \ref{prop:estimate-channel} it follows that there exists $\e_0$ (dependent on $\Gamma$ and $\tau$) such that
\begin{equation*}
	R^{\bm\xi}[\wt,\vt]\leq C\exp(-c/\e)\Gamma\e^{-2}\Psi(\bm h),
\end{equation*}
for any $\e\in(0,\e_0)$.
Since, for \eqref{eq:barrier} and \eqref{eq:Lut^xi} one has
\begin{equation*}
	\|\mathcal{L}(\ut^{\bm\xi})\|^2\leq C\varepsilon^{-2}\Psi(\bm h),
	%\leq C\e^{-2}\exp(-2A\ell^{\bm h}/\varepsilon),
\end{equation*}
we infer
\begin{equation}\label{eq:d/dtE}
	\frac{d}{dt}E^{\bm\xi}[\wt,\vt]\leq -\eta\e^\mu E^{\bm\xi}[\wt,\vt]-\eta\|\vt\|^2+C\e^{-2}\Psi(\bm h).
\end{equation}
Now, let us compute the time derivative of the barrier function $\Psi$.
Direct differentiation gives
\begin{equation*}
	\frac{d\Psi}{dt}=2\sum_{i,j=1}^{N+1}\langle\mathcal{L}^{AC}(u^{\bm h}),k^{\bm h}_j\rangle
		\Bigl\{\langle\mathcal{L}^{AC}(u^{\bm h}),k_{ji}^{\bm h}\rangle+\langle L^{AC}u^{\bm h}_i,k^{\bm h}_j\rangle\Bigr\}h'_i,
\end{equation*}
where $L^{AC}$ is the linearization of $\mathcal{L}^{AC}(u)$ about $u^{\bm h}$, i.e.
\begin{equation*}
	L^{AC}w:=\varepsilon^2w_{xx}-F''(u^{\bm h})w.
\end{equation*} 
Using the estimates provided by inequalities \eqref{eq:ineq-k} and \eqref{eq:xi-vt}, we deduce
\begin{equation*}
	\begin{aligned}
	\bigl|\langle\mathcal{L}^{AC}(u^{\bm h}),k_{ji}^{\bm h}\rangle h_i'\bigr|
	&\leq|\bm h'|_{{}_\infty}\|\mathcal{L}^{AC}(u^{\bm h})\|\|k^{\bm h}_{ji}\|
		\leq C\varepsilon^{-5/2}\|\mathcal{L}^{AC}(u^{\bm h})\|\|v\|,\\
	\bigl|\langle L^{AC}u^{\bm h}_i,k^{\bm h}_j\rangle h'_i\bigr|
	&\leq|\bm h'|_{{}_\infty}\|k^{\bm h}_j\|\|L^{AC}u^{\bm h}_i\|
		\leq C\exp(-c/\varepsilon)\|v\|,
	\end{aligned}
\end{equation*}
where in the last passage the inequality $\|L^{AC}u^{\bm h}_i\|\leq C\e^{-1/2}\exp(-Al^{\bm h}/\varepsilon)$ has been used (see \cite[Proposition 7.2]{Carr-Pego2}).
Thus, since
\begin{equation*}
	|\langle\mathcal{L}^{AC}(u^{\bm h}),k^{\bm h}_j\rangle|\leq C\varepsilon^{-1/2} \|\mathcal{L}^{AC}(u^{\bm h})\|,
\end{equation*}
we deduce the bound
\begin{equation*}
	\left|\frac{d\Psi}{dt}\right|
	\leq C\varepsilon^{-1/2} \left\{\varepsilon^{-2}\|\mathcal{L}^{AC}(u^{\bm h})\|+\exp(-c/\varepsilon)\right\}\|\mathcal{L}^{AC}(u^{\bm h})\|\|v\|.
\end{equation*}
It is well known (see \cite[Proposition 3.5]{Carr-Pego}) that
\begin{equation}
	\|\mathcal{L}^{AC}(u^{\bm h})\|^2\leq C\e\sum_{j=1}^{N+1}|\alpha^{j+1}-\alpha^j|^2\leq C\e\Psi(\bm h),
\end{equation}
where we used the definition of $\Psi$ \eqref{eq:barrier}.
Therefore, we obtain
\begin{equation*}
	\begin{aligned}
	\left|\Gamma\frac{d\Psi}{dt}\right|
	&\leq C\,\Gamma\bigl\{\e^{-3/2}\Psi^{1/2}+\exp(-c/\varepsilon)\bigr\}\|v\|\Psi^{1/2}\\
	&\leq \eta\|v\|^2+C\,\Gamma^2\bigl\{\varepsilon^{-3/2}\Psi^{1/2}+\exp(-c/\varepsilon)\bigr\}^2\Psi.
	\end{aligned}
\end{equation*}
Hence, observing that $\Psi\leq C\exp\bigl(-c/\varepsilon\bigr)$, we end up with
\begin{equation}\label{eq:Psi'}
	\left|\Gamma\frac{d\Psi}{dt}\right|\leq \eta\|v\|^2+C\,\Gamma^2\exp(-c/\varepsilon)\Psi.
\end{equation}
Combining \eqref{eq:d/dtE} and \eqref{eq:Psi'}, we obtain that if $(\bm\xi,\wt,\vt)\in\hat{\mathcal{Z}}_{{}_{\Gamma,\rho}}$ is a solution of \eqref{eq:w-v-xi}, then
\begin{equation*}
	\frac d{dt}\bigl\{E^{\bm{\xi}}[\wt,\vt]-\Gamma\e^{-2}\Psi(\bm h)\bigr\} \leq
	-\eta\e^\mu E^{\bm\xi}[\wt,\vt]+C\bigl(\varepsilon^{-2}+\Gamma^2\exp(-c/\varepsilon)\bigr)\Psi,
\end{equation*}
for some $\eta\in(0,1)$.
Therefore the estimate \eqref{eq:E-Psi} follows from 
\begin{equation*}
C\exp(-c/\varepsilon)\Gamma^2-\eta\,\varepsilon^{\mu-2}\Gamma +C\varepsilon^{-2}\leq 0,
\end{equation*}
 and the latter is verified for $\Gamma\in [\Gamma_1,\Gamma_2]$, provided $\varepsilon\in(0,\varepsilon_0)$ with $\varepsilon_0$ sufficiently small so that 
 $\eta^2\varepsilon^{2\mu} - 4C^2\e^{2}\exp(-c/\varepsilon) > 0$.
\end{proof}
\begin{rem}
Regarding the role of the parameter $\tau$ and its possible dependence on $\e$, we observe that Propositions \ref{prop:estimate-channel} and \ref{prop:energy-estimate}
are valid if the condition \eqref{eq:cond-eps-tau} holds.
Therefore, the parameter $\tau$ can be chosen of the order $\mathcal{O}\left(\e^{-k}\right)$ for some $k>0$ and the results of this section hold true by choosing $\theta>\max\{2,k+1\}$;
in particular, the estimate \eqref{eq:E-Psi} is valid with $\mu=\theta+k$.
On the other hand, if either $\tau$ is independent on $\e$ or $\tau\to0^+$ as $\e\to0^+$, we can choose any $\theta>2$ and the estimate \eqref{eq:E-Psi} is valid with $\mu=\theta$.

In general, if $\tau=\tau(\e)$ for some function $\tau:\mathbb{R}^+\rightarrow\mathbb{R}^+$,
then we can prove the results of Propositions \ref{prop:estimate-channel} and \ref{prop:energy-estimate} by working with the energy
\begin{equation*}
	E^{\bm\xi}[\wt,\vt]:=\frac12A^{\bm\xi}(\wt)+\frac\tau2\|\vt\|^2+f(\e)\tau\langle\wt,\vt\rangle,
\end{equation*}
where $f:\mathbb{R}^+\to\mathbb{R}^+$ is a function such that $f(\e)\tau(\e)/\e\to0^+$ and $f(\e)/\e^2\to0^+$ as $\e\to0^+$.
\end{rem}
Now, we have all the tools to prove our main result.
\begin{proof}[Proof of Theorem \ref{thm:main}]
Let $(\ut_0,\ut_1)\in\,\stackrel{\circ}{\mathcal{Z}}_{{}_{\Gamma,\rho}}$ and let $(\ut,\vt)\in\mathcal{Z}_{{}_{\Gamma,\rho}}$ 
for $t\in[0,T_\varepsilon]$ be the solution to \eqref{eq:integrated-CH-sys}. 
Then, $\ut=\ut^{\bm\xi}+\wt$ and $(\wt,\vt,\bm\xi)\in\hat{\mathcal{Z}}_{{}_{\Gamma,\rho}}$ solves the system \eqref{eq:w-v-xi} for $t\in[0,T_\varepsilon]$. 
We have already seen that the property \eqref{eq:u-uh,h'} holds.
Assume that $T_\varepsilon$ is maximal and apply Proposition \ref{prop:energy-estimate}; from \eqref{eq:E-Psi}, it follows that
\begin{equation*}
	\frac d{dt}\Bigl\{\exp(\eta\varepsilon^\mu t)(E^{\bm{\xi}}[\wt,\vt]-\Gamma\e^{-2}\Psi(\bm h))\Bigr\}\leq0,
	\quad \qquad t\in[0,T_\varepsilon]
\end{equation*}
and so,
\begin{equation*}
	\exp(\eta\varepsilon^\mu t)\left\{E^{\bm{\xi}}[\wt,\vt]-\Gamma\e^{-2}\Psi(\bm h)\right\}(t)\leq\left\{E^{\bm{\xi}}[\wt,\vt]-\Gamma\e^{-2}\Psi(\bm h)\right\}(0)<0,
	\qquad  t\in[0,T_\varepsilon].
\end{equation*}
Therefore, $(\ut,\vt)$ remains in the channel $\mathcal{Z}_{{}_{\Gamma,\rho}}$ while $\bm h=\mathbf{G}(\bm\xi)\in\partial\Omega_\rho\in\Omega_\rho$ 
 and if $T_\varepsilon<+\infty$ is maximal, then $\bm h(T_\varepsilon)\in\partial\Omega_\rho$, that is
\begin{equation}\label{hfrontiera}
	h_j(T_\varepsilon)-h_{j-1}(T_\varepsilon)=\varepsilon/\rho \qquad \textrm{for some } j.
\end{equation}
From \eqref{eq:u-uh,h'}  it follows that for all $t\in[0,T_\varepsilon]$, one has
\begin{equation}\label{dhmax}
	|h_j(t)-h_j(0)|\leq C\e^{-2}\tau^{-1/2}\exp(-Al^{\bm h(t)}/\varepsilon)t \qquad \textrm{for any } j=1,\dots,N+1,
\end{equation} 
where $l^{\bm h(t)}$ is the minimum distance between layers at the time $t$.
Combining \eqref{hfrontiera} and \eqref{dhmax},  we obtain 
\begin{equation*}
	\varepsilon/\rho\geq l^{\bm h(0)}-2C\e^{-2}\tau^{-1/2}\exp(-A/\rho)T_\varepsilon.
\end{equation*}
Hence, using \eqref{eq:triangle} we have
\begin{equation*}
	T_\varepsilon\geq C\bigl(\ell^{\bm h(0)}-\varepsilon/\rho\bigr)\e^2\tau^{1/2}\exp(A/\rho)\geq 
	C\bigl(\ell^{\bm h(0)}-\varepsilon/\rho\bigr)\e^2\tau^{1/2}\exp(A\delta/\varepsilon),
\end{equation*}
and the proof is complete.
\end{proof}

\section{Layer dynamics}\label{sec:layers}
As we have seen in the previous section, there exist metastable states for the hyperbolic Cahn--Hilliard equation \eqref{eq:hyp-CH},
that are approximately equal to $+1$ or $-1$ except near $N+1$ transition points moving with exponentially small velocity.
The aim of this section is to derive and study a system of ODEs describing the movement of the transition layers.
Precisely, after deriving a system of ODEs from \eqref{eq:xi-compact}, 
we will compare such system with the one obtained in the case of the classic Cahn--Hilliard equation \eqref{eq:CH} by studying, in particular, the limit as $\tau\to0$.
\subsection{Equations of transition layers}
In order to derive the system of ODEs, we use the approximation $(\wt,\vt)\approx(0,\sum_{j=1}^N\xi'_j\ut_j^{\xi})$; substituting $\wt=0$ in \eqref{eq:xi} we get
\begin{equation*}
	\sum_{i=1}^N\langle \ut_i^{\bm\xi},E_j^{\bm\xi}\rangle\xi'_i=\langle v,E^{\bm\xi}_j\rangle, \qquad j=1,\dots,N.
\end{equation*}
In order to eliminate the variable $v$, let us differentiate and multiply by $\tau$ the latter equation:
\begin{align*}
	\tau\sum_{i,l=1}^N & \bigl(\langle\ut^{\bm\xi}_{il},E^{\bm\xi}_j\rangle+\langle\ut^{\bm\xi}_i,E^{\bm\xi}_{jl}\rangle\bigr)\xi'_l\xi'_i
	+\tau\sum_{i=1}^N\langle\ut^{\bm\xi}_i,E^{\bm\xi}_j\rangle\xi''_i\\
	& =-\langle\mathcal{L}(\ut^{\bm\xi}),E^{\bm\xi}_j\rangle-\langle v,E^{\bm\xi}_j\rangle+
	\tau\sum_{l=1}^N\langle v,E^{\bm\xi}_{jl}\rangle\xi'_l, \qquad j=1,\dots,N.
\end{align*}
Using the approximation $\vt\approx\sum_{j=1}^N\xi'_j\ut_j^{\xi}$, we obtain
\begin{equation}\label{eq:xi-sum1}
	\tau\sum_{i=1}^N\langle\ut_i^{\bm\xi},E^{\bm\xi}_j\rangle\xi''_i+\sum_{i=1}^N\langle\ut_i^{\bm\xi},E^{\bm\xi}_j\rangle\xi'_i+\tau\sum_{i,l=1}^N\langle\ut_{il}^{\bm\xi},E^{\bm\xi}_j\rangle\xi'_i\xi'_l=
	\langle\mathcal{L}\big(\ut^{\bm\xi}\big),E^{\bm\xi}_j\rangle,
\end{equation}
for $j=1,\dots,N$.
In order to simplify \eqref{eq:xi-sum1}, let us compute the terms  $a_{ij}=\langle\ut_i^{\bm\xi},E^{\bm\xi}_j\rangle$, $\langle\ut_{il}^{\bm \xi},E^{\bm\xi}_j\rangle$
and $\langle\mathcal{L}\big(\ut^{\bm\xi}\big),E^{\bm\xi}_j\rangle$. 
The formula for $a_{ij}$ is given in \eqref{eq:aij} and implies that the matrix $(a_{ij})\in\mathbb{R}^{N\times N}$ has the form
\begin{equation*}
	(a_{ij})=\left(\begin{array}{ccccc} 4l_2 & 0 & 0 & \dots & 0\\
	-4l_3 & 4l_3 & 0 & \dots & 0\\
	4l_4 & -4l_4 & 4l_4 & \dots & 0\\
	\dots & \dots & \dots & \dots & \dots\\
	(-1)^{N-1}4l_{N+1} & (-1)^{N-2}4l_{N+1} & (-1)^{N-3}4l_{N+1} & \dots & 4l_{N+1}
	\end{array}\right)+\mathcal{O}(\e),
\end{equation*}
with inverse
\begin{equation*}
	(a_{ij})^{-1}:=\left(\begin{array}{cccccc} \displaystyle{\frac1{4l_2}} & 0 & 0 & \dots & 0 & 0\\
	\displaystyle{\frac1{4l_2}} & \displaystyle{\frac1{4l_3}} & 0 & \dots & 0 & 0\\
	0 & \displaystyle\frac1{4l_3} & \displaystyle\frac1{4l_4} & \dots & 0 & 0\\
	\dots & \dots & \dots & \dots & \dots\\
	0 & 0 & 0 & \dots &  \displaystyle\frac1{4l_N} &  \displaystyle\frac1{4l_{N+1}}\\
	\end{array}\right)+\mathcal{O}(\e).
\end{equation*}
Next, for Lemma \ref{lem:u^h_j}, \eqref{eq:AC-CH} and the definition $E^{\bm\xi}_j$ \eqref{eq:Ej-Qj}, we have
\begin{equation}\label{eq:L,Ei}
	\begin{aligned}
		\langle\mathcal{L}\big(\ut^{\bm\xi}\big),E^{\bm\xi}_j\rangle & =\langle\mathcal{L}^{AC}\big(u^{\bm\xi}\big),u^{\bm h}_j+u^{\bm h}_{j+1}-Q'_{j}\rangle
		= \langle\mathcal{L}^{AC}\big(u^{\bm\xi}\big),u^{\bm h}_j+u^{\bm h}_{j+1}\rangle+\mathcal{O}(e^{-c/\e})\\
		& =\int_{{I_j}\cup{I_{j+1}}}\left(\e^2 u^{\bm h}_{xx}(x)-F'(u^{\bm h}(x))\right)u^{\bm h}_x(x)\,dx+\mathcal{O}(e^{-c/\e})\\
		& =\alpha^{j+2}-\alpha^{j}+\mathcal{O}(e^{-c/\e}),
	\end{aligned}
\end{equation}
for $j=1,\dots,N$.
Finally, let us compute the terms $\langle\ut_{il}^{\bm \xi},E^{\bm\xi}_j\rangle$, by using the formulas \eqref{eq:ut-j-h} and \eqref{eq:Ej} for $u_i^{\bm h}$ and $E^{\bm\xi}_j$, respectively.
In what follows, we omit the tedious, but straightforward computation of the derivatives of the exponentially small terms,
because one can prove (using the bounds in \cite{Bates-Xun1,Bates-Xun2,Carr-Pego,Carr-Pego2}) that they remain exponentially small in $\e$. 
Therefore, differentiating the identities 
\begin{equation*}
	\ut^{\bm\xi}_i=\ut_i^{\bm h}+\ut_{N+1}^{\bm h}\frac{\partial h_{N+1}}{\partial h_i}, \qquad  \mbox{ and } \qquad \frac{\partial h_{N+1}}{\partial h_i}=(-1)^{N-i}+\mathcal{O}(e^{-c/\e}),
\end{equation*}
 we obtain
\begin{equation*}
	\ut^{\bm\xi}_{il}=\ut_{il}^{\bm h}+(-1)^{N-l}\ut_{i,N+1}^{\bm h}+(-1)^{N-i}\ut_{N+1,l}^{\bm h}
	+(-1)^{i+l}\ut_{N+1,N+1}^{\bm h}+\mathcal{O}(e^{-c/\e}).
\end{equation*}
From \eqref{eq:ut-j-h} and the formula for $u_i^{\bm h}$ of Lemma \ref{lem:u^h_j}, it follows that
\begin{equation*}
	\ut_{ii}^{\bm h}(x)=\begin{cases}
	u^{\bm h}_x(x)+\mathcal{O}(e^{-c/\e}), \qquad \qquad &x\in I_i,\\
	 \mathcal{O}(e^{-c/\e}), & \mbox{otherwise},
	\end{cases}
\end{equation*}
and $\ut_{il}^{\bm h}(x)=e$, $i\neq l$, for $i=1,\dots,N+1$. 
Hence, we have 
\begin{align}
	\ut^{\bm\xi}_{ii}(x)& =\begin{cases}
	u^{\bm h}_x(x)+\mathcal{O}(e^{-c/\e}), \qquad \qquad & x\in I_i\cup I_{N+1},\\
	 \mathcal{O}(e^{-c/\e}), & \mbox{otherwise},
	\end{cases} \qquad & \mbox{ for } i=1,\dots,N,\label{eq:ut-jj-xi}
	\\
	\ut^{\bm\xi}_{il}(x)& =\begin{cases}
	(-1)^{i+l}u^{\bm h}_x(x)+\mathcal{O}(e^{-c/\e}), \qquad \qquad & x\in I_{N+1},\\
	 \mathcal{O}(e^{-c/\e}), & \mbox{otherwise},
	\end{cases} & \mbox{ for } i\neq l. \label{eq:ut-jl-xi}
\end{align}
Thanks to the formulas \eqref{eq:Ej}, \eqref{eq:ut-jj-xi} and \eqref{eq:ut-jl-xi}, we can compute the quantities $\langle\ut_{il}^{\bm\xi},E^{\bm\xi}_j\rangle$.
Let us start with the case $i=l=j\neq N$; for \eqref{eq:Ej} and \eqref{eq:ut-jj-xi}, we deduce
\begin{align*}
	\langle\ut_{ii}^{\bm\xi},E^{\bm\xi}_i\rangle&=\int_0^1\ut_{ii}^{\bm\xi}(x)E^{\bm\xi}_i(x)\,dx=
	\int_{h_{i-1/2}}^{h_{i+1/2}}u^{\bm h}_x(x)\left[(-1)^i-u^{\bm h}(x)\right]dx+\mathcal{O}(e^{-c/\e})\\
	&=-\frac12\left[(-1)^i-u^{\bm h}(x)\right]^2\bigg|_{h_{i-1/2}}^{h_{i+1/2}}+\mathcal{O}(e^{-c/\e})=-2+\mathcal{O}(e^{-c/\e}),
\end{align*}
for $i=1,\dots,N-1$. 
In the case $i=l=j=N$, we have
\begin{align*}
	\langle\ut_{NN}^{\bm\xi},E^{\bm\xi}_N\rangle&=\int_0^1\ut_{NN}^{\bm\xi}(x)E^{\bm\xi}_N(x)\,dx=
	\int_{h_{N-1/2}}^{1}u^{\bm h}_x(x)\left[(-1)^N-u^{\bm h}(x)\right]dx+\mathcal{O}(e^{-c/\e})\\
	&=-\frac12\left[(-1)^N-u^{\bm h}(x)\right]^2\bigg|_{h_{N-1/2}}^{1}+\mathcal{O}(e^{-c/\e})= \mathcal{O}(e^{-c/\e}).
\end{align*}
The latter equality together with the expression for $(a_{ij})^{-1}$ and \eqref{eq:L,Ei} gives the equation for $\xi$ in the case $N=1$ (two layers): 
equation \eqref{eq:xi-sum1} in the case $N=1$ becomes
\begin{equation*}
	\tau\xi''+\xi'=\frac{1}{4l_2}(\alpha^{3}-\alpha^{1}).
\end{equation*}
Consider now the case $i=l=j+1$, $j\neq N$ with $N>1$; for the formulas \eqref{eq:Ej} and \eqref{eq:ut-jj-xi}, we infer
\begin{align*}
	\langle\ut_{j+1,j+1}^{\bm\xi},E^{\bm\xi}_j\rangle&=\int_0^1\ut_{j+1,j+1}^{\bm\xi}(x)E^{\bm\xi}_j(x)\,dx=
	\int_{h_{j+1/2}}^{h_{j+3/2}}u^{\bm h}_x(x)\left[(-1)^j-u^{\bm h}(x)\right]dx+\mathcal{O}(e^{-c/\e})\\
	&=-\frac12\left[(-1)^j-u^{\bm h}(y)\right]^2\bigg|_{h_{j+1/2}}^{h_{j+3/2}}+\mathcal{O}(e^{-c/\e})=2+\mathcal{O}(e^{-c/\e}),
\end{align*}
If $j\neq N$ and either $i=l\neq j, j+1$ or $i\neq l$, then all the terms $\langle\ut_{il}^{\bm\xi},E^{\bm\xi}_j\rangle$ are negligible for \eqref{eq:Ej} and \eqref{eq:ut-jl-xi}.
In conclusion, for $j\neq N$, we have
\begin{equation*}
	\langle\ut_{il}^{\bm\xi},E^{\bm\xi}_j\rangle=\mathcal{O}(e^{-c/\e})+\begin{cases}
	-2, \qquad \qquad & i=l=j,\\
	2, & i=l=j+1,\\
	0, &\mbox{otherwise}.
	\end{cases}
\end{equation*}
Hence, the first $N-1$ equations of \eqref{eq:xi-sum1} become
\begin{equation*}
	\sum_{i=1}^{N}(\tau\xi''_i+\xi'_i)a_{ij}
	+2\tau\left[\left(\xi'_{j+1}\right)^2-\left(\xi'_j\right)^2\right]=\alpha^{j+2}-\alpha^{j},   
\end{equation*}
for $j=1,\dots,N-1$. 
The last equation of \eqref{eq:xi-sum1} is more difficult because the functions $\ut_{il}^{\bm\xi}$ and $E^{\bm\xi}_N$ are not negligible in $I_{N+1}$.
We have already seen that $\langle\ut_{NN}^{\bm\xi},E^{\bm\xi}_N\rangle=e$, for the other terms we have
\begin{align*}
	\langle\ut_{ii}^{\bm\xi},E^{\bm\xi}_N\rangle&=\int_0^1\ut_{ii}^{\bm\xi}(x)E^{\bm\xi}_N(x)\,dx=
	\int_{h_{N+1/2}}^{1}u^{\bm h}_x(x)\left[(-1)^N-u^{\bm h}(x)\right]dx+\mathcal{O}(e^{-c/\e})\\
	&=-\frac12\left[(-1)^N-u^{\bm h}(x)\right]^2\bigg|_{h_{N+1/2}}^{1}+\mathcal{O}(e^{-c/\e})=2+\mathcal{O}(e^{-c/\e}),
\end{align*}
for $i=1,\dots, N-1$, and
\begin{align*}
	\langle\ut_{il}^{\bm\xi},E^{\bm\xi}_N\rangle&=\int_0^1\ut_{il}^{\bm\xi}(x)E^{\bm\xi}_N(x)\,dx=
	(-1)^{i+l}\int_{h_{N+1/2}}^{1}u^{\bm h}_x(x)\left[(-1)^N-u^{\bm h}(x)\right]dx+\mathcal{O}(e^{-c/\e})\\
	&=-\frac{(-1)^{i+l}}2\left[(-1)^N-u^{\bm h}(x)\right]^2\bigg|_{h_{N+1/2}}^{1}+\mathcal{O}(e^{-c/\e})=2(-1)^{i+l}+\mathcal{O}(e^{-c/\e}),
\end{align*}
for $i\neq l$.
Therefore, 
\begin{equation*}
	\langle\ut_{il}^{\bm\xi},E^{\bm\xi}_N\rangle=\mathcal{O}(e^{-c/\e})+\begin{cases}
	0, \qquad \qquad \qquad & i=l=N,\\
	2, & i=l\neq N,\\
	2(-1)^{i+l}, &\mbox{otherwise}.
	\end{cases}
\end{equation*}
It follows that the last equation of \eqref{eq:xi-sum1} becomes
\begin{equation*}
	\sum_{i=1}^N(\tau\xi''_i+\xi'_i)a_{ij}
	+2\tau\left[\sum_{i=1}^{N-1}\left(\xi'_i\right)^2+\sum_{i\neq l}(-1)^{i+l}\xi'_i\xi'_l\right]=\alpha^{N+2}-\alpha^N.
\end{equation*}
Since
\begin{align*}
	\sum_{i=1}^{N-1}\left(\xi'_i\right)^2+\sum_{i\neq l}(-1)^{i+l}\xi'_i\xi'_l&=\left(\sum_{i=1}^{N-1}(-1)^{N-j}\xi'_i\right)^2+2\xi'_N\sum_{i=1}^{N-1}(-1)^{N-j}\xi'_i\\
	&=\left(\sum_{i=1}^{N}(-1)^{N-i}\xi'_i-\xi'_N\right)\left(\sum_{i=1}^{N}(-1)^{N-i}\xi'_i+\xi'_N\right),
\end{align*}
we can rewrite
\begin{equation*}
	\sum_{i=1}^N(\tau\xi''_i+\xi'_i)a_{ij}
	+2\tau\left[\left(\sum_{i=1}^{N}(-1)^{N-i}\xi'_i\right)^2-\left(\xi'_N\right)^2\right]=\alpha^{N+2}-\alpha^N.
\end{equation*}
By applying the inverse matrix $(a_{ij})^{-1}$, we obtain the following equation for $\bm\xi$:
\begin{equation*}
	\begin{aligned}
		\tau\xi_1''+\xi_1'+\frac{\tau}{2l_2}Q(\xi'_2,\xi'_1)&=P_1(\bm h),\\
		\tau\xi_i''+\xi_i'+\frac{\tau}{2l_i}Q(\xi'_i,\xi'_{i-1})+\frac{\tau}{2l_{i+1}}Q(\xi'_{i+1},\xi'_i)&=P_{i-1}(\bm h)+P_i(\bm h), \\
		&\quad i=2,\dots,N-1, \\
		\tau\xi''_N+\xi'_N+\frac\tau{2l_N}Q(\xi'_N,\xi'_{N-1})+\frac\tau{2l_{N+1}}Q\left(\sum_{j=1}^{N}(-1)^{N-j}\xi'_j,\xi'_N\right)&=
		P_{N-1}(\bm h)+P_N(\bm h),
	\end{aligned}
\end{equation*}
where we introduced the functions 
\begin{equation}\label{eq:Q}
	Q(x,y):=x^2-y^2, \qquad\mbox{ and}\qquad  P_i(\bm h):=\frac{1}{4l_{i+1}}(\alpha^{i+2}-\alpha^{i}), \qquad i=1,\dots,N.
	%Q_N(\bm{h}')&:=\frac{1}{2l_{N+1}}\left[\left(\sum_{j=1}^{N-1}h'_j\right)^2+2h'_N\sum_{j=1}^{N-1}h'_j\right],
\end{equation}
Therefore, we derived the equation for $\bm\xi=(\xi_1,\dots,\xi_N)$; 
recall that the transition points are located at $\bm h=(h_1,\dots,h_N,h_{N+1})$ and that $\xi_i=h_i$ for $i=1,\dots,N$; 
the position of the last point $h_{N+1}$ is determined by the other points $(h_1,\dots,h_N)$ for the conservation of the mass.
In order to write the equation for $\bm h=(h_1,\dots,h_N,h_{N+1})$, which is more natural, symmetric and easy to handle,
we use \eqref{eq:h'_N+1} by neglecting the exponentially smallest terms; 
thus, we consider the approximations
\begin{equation*}
	h'_{N+1}\approx\sum_{j=1}^N(-1)^{N-j}h'_j, \qquad \qquad h''_{N+1}\approx\sum_{j=1}^N(-1)^{N-j}h''_j.
\end{equation*}
We can now write the equation for $\bm h=(h_1,\dots,h_N,h_{N+1})$.
In the case $N=1$ (two layers) we have
\begin{equation}\label{eq:h2layers}
	\begin{aligned}
		\tau h_1''+h_1'=\frac{1}{4l_2}(\alpha^3-\alpha^1),\\
		\tau h_2''+h_2'=\frac{1}{4l_2}(\alpha^3-\alpha^1).
	\end{aligned}
\end{equation}
%For $N=2$ (three layers) we have
%\begin{equation}\label{eq:h3layers}
%	\begin{aligned}
%		\tau h_1''+h_1'+\frac{\tau}{2l_2}\left[\left(h_2'\right)^2-\left(h_1'\right)^2\right]&=\frac{1}{4l_2}(\alpha^3-\alpha^1),\\
%		\tau h_2''+h_2'+\frac{\tau}{2l_2}\left[\left(h_{2}'\right)^2-\left(h_{1}'\right)^2\right]+\frac{\tau}{2l_{3}}\left[\left(h'_3\right)^2-\left(h_2'\right)^2\right]&=
%		\frac{1}{4l_2}(\alpha^{3}-\alpha^{1})+\frac{1}{4l_{3}}(\alpha^{4}-\alpha^{2}),\\
%		\tau h_3''+h_3'+\frac{\tau}{2l_{3}}\left[\left(h'_3\right)^2-\left(h_2'\right)^2\right]&=\frac{1}{4l_{3}}(\alpha^{4}-\alpha^{2}).
%	\end{aligned}
%\end{equation}
%The equation for $h_3$ has been obtained using that $h_3'=h_2'-h_1'$.
In general, for $N\geq2$ the equations are
\begin{equation}\label{eq:hN+1layers}
	\begin{aligned}
		\tau h_1''+h_1'+\frac{\tau}{2l_2}Q(h'_2,h'_1)&=P_{1}(\bm h),\\
		\tau h_i''+h_i'+\frac{\tau}{2l_{i}}Q(h'_i,h'_{i-1})+\frac\tau{2l_{i+1}}Q(h'_{i+1},h'_i)&=P_{i-1}(\bm h)+P_i(\bm h), \qquad \, i=2,\dots,N, \\
%		\tau h_N''+h_N'+\tau\,Q_{N-1}(\bm{h}')+\tau\,Q_N(\bm{h}')&=P_{N-1}(\bm{h})+P_{N}(\bm{h}),\\
		\tau h_{N+1}''+h_{N+1}'+\frac{\tau}{2l_{N+1}}Q(h'_{N+1},h'_N)&=P_{N}(\bm h).
	\end{aligned}		
\end{equation}
Equations \eqref{eq:h2layers} and \eqref{eq:hN+1layers} imply the following equations for the interval length $l_j$ (remember that $l_1=h_1-h_0=2h_1$ and $l_{N+2}=h_{N+2}-h_{N+1}=2(1-h_{N+1})$).
For $N=1$ one has
\begin{equation}\label{eq:l2layers}
	\begin{aligned}
		\tau l_1''+l_1'&=\frac{1}{2l_2}(\alpha^3-\alpha^1),\\
		\tau l_2''+l'_2&=0,\\
		\tau l_3''+l_3'&=-\frac{1}{2l_2}(\alpha^3-\alpha^1).
	\end{aligned}
\end{equation}
For $N=2$:
\begin{equation}\label{eq:l3layers}
	\begin{aligned}
		\tau l_1''+l_1'+\frac{\tau}{l_2}\left[\left(h_2'\right)^2-\left(h_1'\right)^2\right]&=\frac{1}{2l_2}(\alpha^3-\alpha^1),\\
		\tau l_2''+l'_2+\frac{\tau}{2l_{3}}\left[\left(h'_3\right)^2-\left(h_2'\right)^2\right]&=\frac{1}{4l_{3}}(\alpha^{4}-\alpha^{2}),\\
		\tau l_3''+l_3'-\frac{\tau}{2l_2}\left[\left(h_{2}'\right)^2-\left(h_{1}'\right)^2\right]&=-\frac{1}{4l_2}(\alpha^{3}-\alpha^{1}),\\
		\tau l_4''+l_4'-\frac{\tau}{l_{3}}\left[\left(h'_3\right)^2-\left(h_2'\right)^2\right]&=-\frac{1}{2l_{3}}(\alpha^{4}-\alpha^{2}).
	\end{aligned}
\end{equation}
In general, for $N\geq3$:
\begin{equation}\label{eq:lN+1layers}
	\begin{aligned}
		\tau l_1''+l_1'+\frac\tau{l_2} Q(h'_2,h'_1)&=2P_1(\bm h),\\
		\tau l_2''+l_2'+\frac\tau{2l_3}Q(h'_3,h_2)&=P_2(\bm h),\\
		\tau l_i''+l_i'+\frac\tau{2l_{i+1}}Q(h'_{i+1},h'_i)-\frac{\tau}{2l_{i-1}}Q(h'_{i-1},h'_{i-2})&=P_{i}(\bm h)-P_{i-2}(\bm h), \qquad  i=3,\dots,N, \\
%		\tau l_N''+l_N'+\tau\,Q_{N}(\bm{h}')-\tau\,Q_{N-2}(\bm{h}')&=P_{N}(\bm{h})-P_{N-2}(\bm{h}),\\
		\tau l_{N+1}''+l_{N+1}'-\frac{\tau}{2l_{N}}Q(h'_N,h'_{N-1})&=-P_{N-1}(\bm h),\\
		\tau l_{N+2}''+l_{N+2}'-\frac\tau{l_{N+1}}Q(h'_{N+1},h'_N)&=-2P_{N}(\bm h).
	\end{aligned}
\end{equation}
Observe that $l_1/2$ and $l_{N+2}/2$ are the distances of $h_1$ and $h_{N+1}$ from the boundary points $0$ and $1$, respectively.
Let $L_-$ and $L_+$ be the length of all the intervals where the solution is approximately $-1$ and $+1$, respectively; namely
\begin{align*}
	L_-:&=\frac{l_1}2+\sum_{i=1}^{N/2} l_{2i+1}, \qquad \qquad \qquad  &L_+&=\sum_{i=1}^{N/2} l_{2i}+\frac{l_{N+2}}{2}, \qquad  &\mbox{ if }\, N \mbox{ is even},\\
	L_-:&=\frac{l_1}2+\sum_{i=1}^{(N-1)/2} l_{2i+1}+\frac{l_{N+2}}{2},   &L_+&=\sum_{i=1}^{(N+1)/2} l_{2i},   &\mbox{ if }\, N \mbox{ is odd}.
\end{align*}
It follows that these quantities satisfy 
\begin{equation}\label{eq:L_pm}
	\tau L_{\pm}''+L_{\pm}'=0.
\end{equation}

\subsection{Comparison with the classic Cahn--Hilliard equation}
In this subsection, we study the equations describing the movement of the transition points derived above,
and we analyze the differences with the corresponding equations valid for the classic Cahn--Hilliard equation \eqref{eq:CH}.
Rewrite the equations \eqref{eq:h2layers} and \eqref{eq:hN+1layers} in a compact form:
in the case of two layers ($N=1$), see equations \eqref{eq:h2layers}, we get
\begin{equation}\label{eq:h2layers-compact}
	\tau\bm h''+\bm h'=\bm{\mathcal{P}}(\bm h),
\end{equation}
where $\bm h=(h_1,h_2)$ and $\bm{\mathcal{P}}:\R^2\rightarrow\R^2$ is defined by 
$$
	\mathcal P_i(h_1,h_2):=\frac{\alpha^3-\alpha^1}{4(h_2-h_1)}, \qquad \qquad i=1,2.
$$
In the case of $N+1$ layers with $N\geq2$, we rewrite \eqref{eq:hN+1layers} as
\begin{equation}\label{eq:hN+1layers-compact}
	\tau \bm h''+\bm h'+\tau\bm{\mathcal{Q}}(\bm h,\bm h')=\bm{\mathcal P}(\bm h),
\end{equation}
where $\bm h=(h_1,\dots,h_{N+1})$ and $\bm{\mathcal P}:\R^{N+1}\rightarrow\R^{N+1}$ is defined by
\begin{equation}\label{eq:P(h)}
	\bm{\mathcal P}(\bm h):=\left(\begin{array}{c}
	\displaystyle\frac{\alpha^3-\alpha^1}{4(h_2-h_1)}\vspace{0.1cm}\\
	\displaystyle{\frac{\alpha^3-\alpha^1}{4(h_2-h_1)}+\frac{\alpha^4-\alpha^2}{4(h_3-h_2)}}\\
	\vdots \\ \vdots \\
	\displaystyle{\frac{\alpha^{N+1}-\alpha^{N-1}}{4(h_N-h_{N-1})}+\frac{\alpha^{N+2}-\alpha^N}{4(h_{N+1}-h_N)}}\vspace{0.1cm}\\
	\displaystyle{\frac{\alpha^{N+2}-\alpha^N}{4(h_{N+1}-h_N)}}
	\end{array}\right),
\end{equation}
and $\bm{\mathcal{Q}}:\R^{N+1}\times\R^{N+1}\rightarrow\R^{N+1}$ is
\begin{equation}\label{eq:Q(h)}
	\bm{\mathcal Q}(\bm h,\bm h'):=\left(\begin{array}{c}
	\displaystyle\frac{\left(h_2'\right)^2-\left(h_1'\right)^2}{2(h_2-h_1)},\vspace{0.1cm}\\
 	\displaystyle\frac{\left(h_2'\right)^2-\left(h_1'\right)^2}{2(h_2-h_1)}+\frac{\left(h_3'\right)^2-\left(h_2'\right)^2}{2(h_3-h_2)}\\
	\vdots\\ \vdots \\
	\displaystyle\frac{\left(h_N'\right)^2-\left(h_{N-1}'\right)^2}{2(h_N-h_{N-1})} 	   
	+\frac{\left(h_{N+1}'\right)^2-\left(h_N'\right)^2}{2(h_{N+1}-h_N)}\vspace{0.1cm}\\
	\displaystyle\frac{\left(h_{N+1}'\right)^2-\left(h_N'\right)^2}{2(h_{N+1}-h_N)}
	\end{array}\right).
\end{equation}
Both in the case \eqref{eq:h2layers-compact} and in the case \eqref{eq:hN+1layers-compact}, taking formally the limit as $\tau\to0^+$ 
one obtains the system describing the motion of the transition layers in the classic Cahn--Hilliard equation \eqref{eq:CH} (see equations (4.36) in \cite{Bates-Xun2}).
%Let us briefly recall the results on the motion of the transition layer in the classic Cahn--Hilliard equation \eqref{eq:CH}.
Indeed, in \cite{Bates-Xun2} the authors derived the following system of ODEs to approximately describe the motion of the transition points $h_1,h_2,\dots,h_{N+1}$ when they are well separated:
\begin{equation}\label{eq:h_i-CahnHilliard}
	\begin{aligned}
		h_1'&=\frac{1}{4l_2}(\alpha^3-\alpha^1),\\
		h_j'&=\frac{1}{4l_j}(\alpha^{j+1}-\alpha^{j-1})+\frac{1}{4l_{j+1}}(\alpha^{j+2}-\alpha^j), \qquad \qquad \qquad j=2,\dots,N\\
		h_{N+1}'&=\frac{1}{4l_{N+1}}(\alpha^{N+2}-\alpha^N).
	\end{aligned}		
\end{equation}
Let us briefly describe the behavior of the solutions to \eqref{eq:h_i-CahnHilliard} when $F$ is an even function 
and $h_i$, $h_{i-1}$ are the closest transition points at time $t=0$, namely assume that there exists a unique $i\in\{1,\dots,N+2\}$ such that
\begin{equation*}
	l_i(0)<l_j(0), \quad j\neq i, \quad j=1,\dots,N+2.
\end{equation*}
In this case, we can use Remark \ref{rem:alfa} and from the estimate \eqref{eq:alfa^j-alfa^i} it follows that $\alpha^i\gg\alpha^j$  for all $j\neq i$, 
and the terms $\alpha^j$ with $j\neq i$ are exponentially small with respect to $\alpha^i$ as $\e\to0^+$.
As a consequence, we can describe the motion of the transition layers in the case of the Cahn--Hilliard equation \eqref{eq:CH} as follows.
In the case $N=1$, the two transition points $h_1$ and $h_2$ move to the right (respectively left) if $l_3=2(1-h_2)<l_1=2h_1$ (respectively $l_3>l_1$) and we have $h'_1\approx h'_2$;
thus, the points move together in an almost rigid way, they move in the same direction at approximately the same speed. 
In the case $N=2$, we have two transitions points moving in the same direction at approximately the same speed $v$, 
while the speed of the third one is exponentially small with respect to $v$, 
and so, the third point is essentially static.
Finally, consider the case $N\geq3$ with $i\in\{3,\dots,N\}$;
the term $\alpha^i$ appears in the equations for $h_{i-2}$, $h_{i-1}$, $h_{i}$ and $h_{i+1}$,
and so we have four points moving at approximately the same speed, 
while all the other layers remain essentially stationary in time.
Precisely, we have  
\begin{equation*}
	h'_{i-2}>0, \; h'_{i-1}>0, \; h'_{i}<0, \; h'_{i+1}<0, \; h'_j=\mathcal{O}\left(e^{-C/\e}h'_i\right) \, \mbox{ for } j\notin\{i-2,i-1,i,i+1\}.
\end{equation*}
Roughly speaking, the system \eqref{eq:h_i-CahnHilliard} shows that the shortest distance between layers decreases:
the closest layers move towards each other, each being followed by its nearest transition point from ``behind'', at approximately the same speed, 
until the points $h_i$ and $h_{i-1}$ are sufficiently close.
Hence, the loss of the mass due to the annihilation of the transitions at $h_{i-1}$ and $h_i$ is compensated by the movement of the nearest neighbor $h_{i-2}$ and $h_{i+1}$.
This property, due to the conservation of the mass, is a fundamental difference with respect to the Allen--Cahn equation \eqref{eq:AC}.
For such equation, Carr and Pego \cite{Carr-Pego} derived the following equations for the transition points $h_j$:
\begin{equation*}
	h'_j=C\e\left(\alpha^{j+1}-\alpha^j\right), \qquad \qquad j=1,\dots,N+1,
\end{equation*}
where $C$ is a constant depending only on $F$.
Then, in the case of the Allen--Cahn equation, the closest layers move towards each other at approximately the same speed satisfying $|h'_i|\approx\e|\alpha^{i+1}-\alpha^i|$, 
while all the other points remain essentially stationary in time.

As it was already mentioned, system \eqref{eq:h_i-CahnHilliard} was derived in \cite[Section 4]{Bates-Xun2} in order to approximately describe 
the movement of the transition layers for the Cahn--Hilliard equation \eqref{eq:CH} until the points are well separated, with distance $l_j>\e/\rho$.
A detailed analysis of the motion of the layers for \eqref{eq:CH} can be also found in \cite{SunWard}, 
where the authors studied in details layer collapse events and presented many numerical simulations
confirming that the layer dynamics is closely described by \eqref{eq:h_i-CahnHilliard}.
However, system \eqref{eq:h_i-CahnHilliard} provides an accurate description of the motion of the points corresponding to the annihilating interval and its two nearest neighbors, 
but it may be slightly inaccurate for other layers.
For example, in \cite{SunWard} it is showed that if $(h_{i-1},h_i)$ is the annihilating interval for some $i\in\{3,\dots,N\}$,
all the points $h_j$ with $j\notin\{i-2,i-1,i,i+1\}$ move at an algebraic slower speed in $\e$ than $h_i$.
In contrast, we saw that for \eqref{eq:h_i-CahnHilliard} the points $h_j$ move exponentially slower than the collapsing layers. 
Apart from that, system \eqref{eq:h_i-CahnHilliard} provides a good description of the layer dynamics for the classic Cahn--Hilliard equation \eqref{eq:CH}. 

In the case of the hyperbolic Cahn--Hilliard equation \eqref{eq:hyp-CH}, the movement of the layer is approximately describe by equations \eqref{eq:h2layers-compact} and \eqref{eq:hN+1layers-compact},
and so, we have to take in account the inertial term $\tau\bm h''$ and the quadratic term $\tau\bm{\mathcal{Q}}(\bm h,\bm h')$ (when $N\geq2$).
In the following, we shall compute some numerical solutions in order to analyze the differences between systems \eqref{eq:hN+1layers} and \eqref{eq:h_i-CahnHilliard}.
To do this, we use Proposition \ref{prop:alfa,beta} choosing $A_+=A_-=\sqrt2$ and $K_+=K_-=4$, which corresponds to the choice $F(u)=\frac14(u^2-1)^2$;
then, we use the approximation
\begin{equation*}
	\alpha^j\approx16\exp\left(-\frac{\sqrt2(h_j-h_{j-1})}{\e}\right).
\end{equation*}

The values of the initial data for the ODEs \eqref{eq:hN+1layers} depend on the choice of the initial datum $(u_0,u_1)$ for the PDE \eqref{eq:hyp-CH};
precisely, we assume that $u_0=u^{\bm{h^0}}$ for some $\bm{h^0}\in\Omega_\rho$, and so $\bm h(0)=\bm{h^0}$ represents the positions of the transition points at time $t=0$,
while the first $N$ components of $\bm h'(0)$ satisfy the third equation of \eqref{eq:w-v-xi}, and $h_{N+1}$ is given by \eqref{eq:h'_N+1} (for the conservation of the mass).
Therefore, we have
\begin{equation}\label{eq:initial-h}
	\begin{aligned}
		h'_1(0)&=\frac{1}{4l_2(0)}\langle\ut_1,E_1^{\bm\xi}\rangle+\mathcal{O}(\e\langle\ut_1,E_1^{\bm\xi}\rangle),\\
		h'_j(0)&=\frac{1}{4l_j(0)}\langle\ut_1,E_{j-1}^{\bm\xi}\rangle+\frac{1}{4l_{j+1}(0)}\langle\ut_1,E_j^{\bm\xi}\rangle+\mathcal{O}(\e\langle\ut_1,E_j^{\bm\xi}\rangle),
		\qquad \qquad j=2,\dots,N\\
		h'_{N+1}(0)&=\sum_{j=1}^N\frac{\partial h_{N+1}}{\partial h_j}h'_j(0)=\frac{1}{4l_{N+1}(0)}\langle\ut_1,E_N^{\bm\xi}\rangle+\mathcal{O}(\e\|\ut_1\|).
	\end{aligned}
\end{equation}
As we have previously done for the ODEs \eqref{eq:hN+1layers}, we consider equations \eqref{eq:initial-h} without the smallest terms $\mathcal{O}(\e\langle\ut_1,E_j^{\bm\xi}\rangle)$.
By reasoning as in the computation of \eqref{eq:L_pm},  we get $L_{\pm}'(0)=0$, and so $L_\pm(t)=0$ for all $t$ and this is consistent with the mass conservation.
In particular, let us stress that in the 2 layers case $(N=1)$ we have $h'_1(0)=h'_2(0)$. 
Finally, notice that choosing $\ut_1=\mathcal L(\ut_0)$ we deduce that $h'_j(0)$ satisfies \eqref{eq:h_i-CahnHilliard}.

We want to focus the attention on the role of the parameter $\tau$ and we consider the same initial data for \eqref{eq:hN+1layers} and \eqref{eq:h_i-CahnHilliard};
in particular for \eqref{eq:h2layers}-\eqref{eq:hN+1layers}, we choose $h'_j(0)$ satisfying \eqref{eq:h_i-CahnHilliard}, meaning that $h'_j(0)$ satisfy \eqref{eq:initial-h} with $\ut_1=\mathcal L(\ut_0)$.
Let us start with the case of $2$ layers.
Observe that $l_2=h_2-h_1$ satisfies \eqref{eq:l2layers} and since $h'_1(0)=h'_2(0)$, we have $l_2(t)=l_2(0)$ for any time $t$.
In the first example, we choose $\e=0.07$: in Table \ref{table:2layers_eps0.07} we show the numerical computation of the difference $h_1(t)-h_1(0)$ for different times $t$
and for different values of $\tau$ ($\tau=0$ corresponds to system \eqref{eq:h_i-CahnHilliard});
since $l_2$ is constant in time, we get $h_2(t)=s(t)+h_2(0)$; in Figure \ref{fig:h1} we show the graph of $h_1$ for $\tau=0$ and $\tau=50$.

\begin{table}[h!]
\vskip0.2cm
\begin{center}
\begin{tabular}{|c|c|c|c|}
\hline TIME $t$ &   $s(t)$, $\tau=0$  &  $s(t)$, $\tau=5$ & $s(t)$, $\tau=50$ \\ 
\hline $300$ & $-0.0128$ & $-0.0126$ & $-0.0113$  \\
\hline $600$ & $-0.0534$ & $-0.0497$ & $-0.0364$  \\
\hline $665$ & $-0.1240$ & $-0.0830$ & $-0.0475$   \\ \hline
\end{tabular}
\caption{The numerical computation of $s(t)=h_1(t)-h_1(0)$ for $\e=0.07$ and different values of $\tau$. 
The initial positions of the layers are $h_1(0)=0.31$, $h_2(0)=0.66$.}
\label{table:2layers_eps0.07}
\end{center}
\end{table}

\begin{figure}[htbp]
\centering
\includegraphics[height=5.5cm, width=6.9cm]{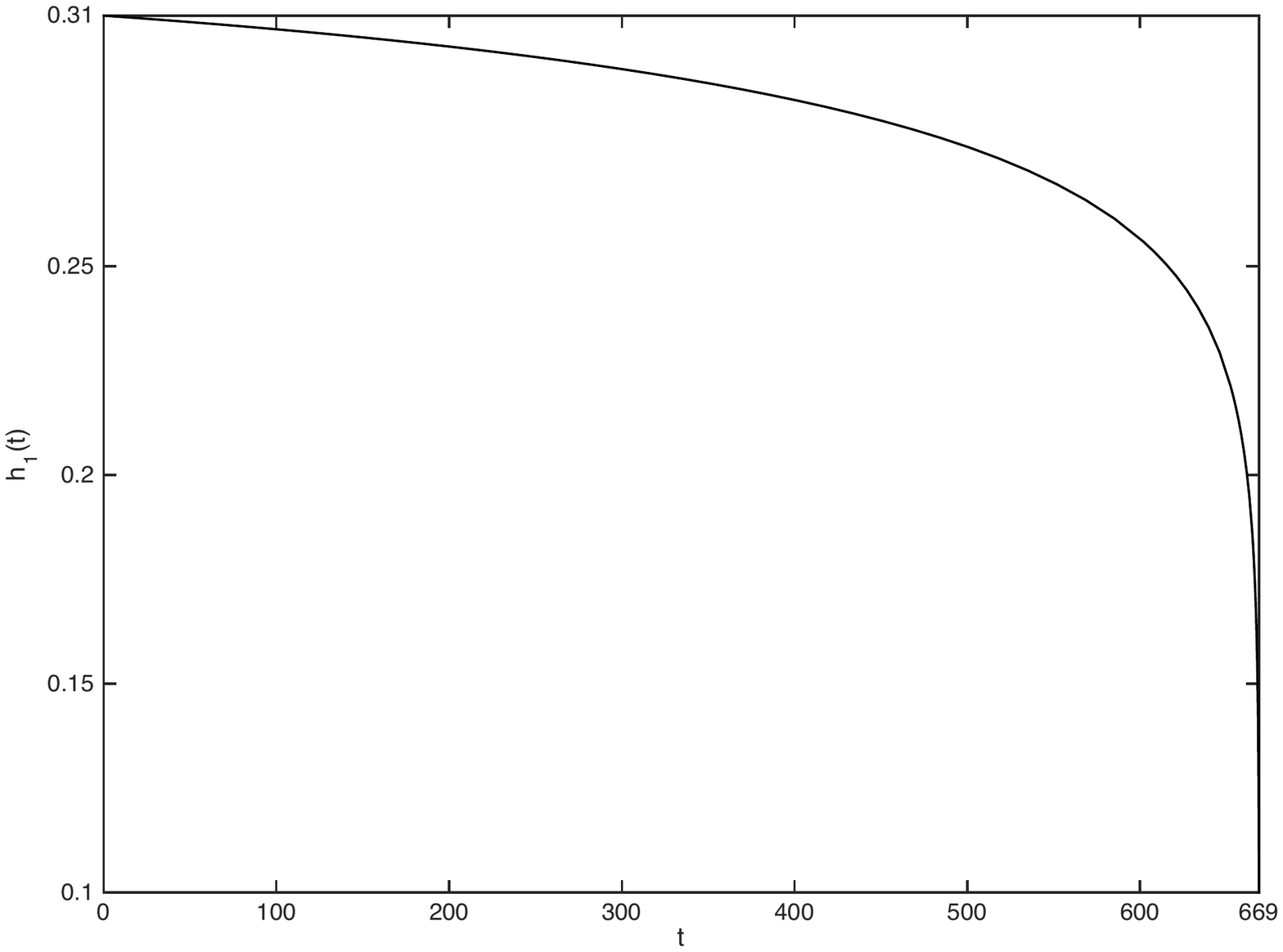}\quad
\includegraphics[height=5.5cm, width=6.9cm]{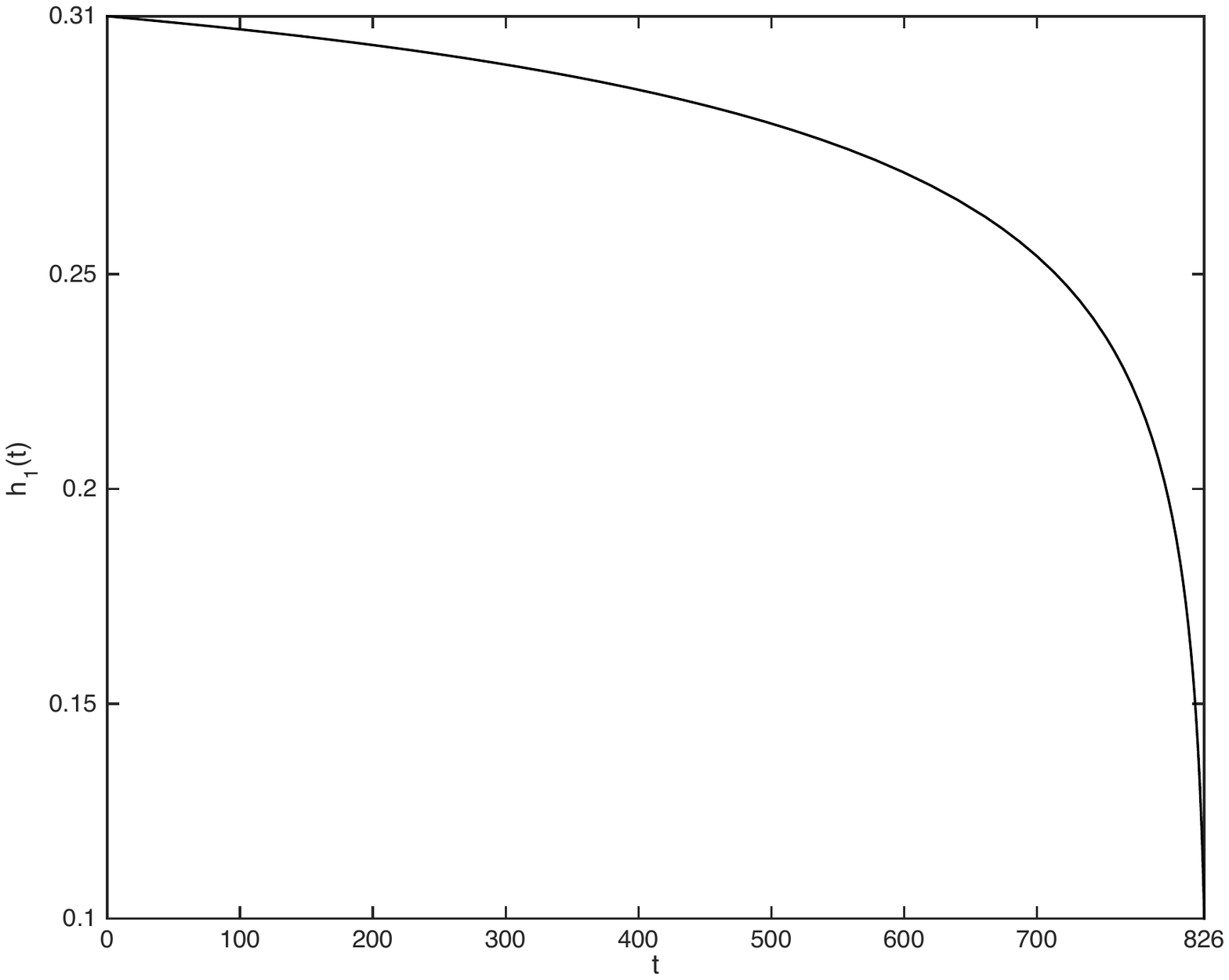}
\caption{The graph of $h_1(t)$ for $\e=0.07$ in the case of systems \eqref{eq:h_i-CahnHilliard} (left) and \eqref{eq:h2layers} (right) with $\tau=50$.}
\label{fig:h1}
\end{figure}

We see that the greater $\tau$ is, the slower the movement of the layers is. 
In particular, in Figure \ref{fig:h1} we see that the behavior of $h_1$ is the same, but the time taken for $h_1$ to reach the position $0.1$ is greater in the case 
of system \eqref{eq:h2layers} with $\tau=50$.

Now, we consider an example with $6$ layers.
For $\e=0.008$, in Tables \ref{table:6layers-tau0} and \ref{table:6layers-tau125} we numerically compute the difference $h_i(t)-h_i(0)$ for $i=1,\dots,6$ 
in the case $\tau=0$ and $\tau=\e^{-1}=125$, respectively,
and, in particular, we see that in the case $\tau=125$ the layers move slower than in the case without inertial terms.

\begin{table}[h!]
\begin{center}
\begin{tabular}{|c|c|c|c|c|}
\hline $s_i(t)$ &   $t=10^2$  &   $t=10^4$ &  $t=10^5$ & $t=1.55*10^5$  \\ 
\hline $s_1(t)$ & $2.99*10^{-7}$ & $3.00*10^{-5}$ & $3.13*10^{-4}$ & $4.96*10^{-4}$  \\
\hline $s_2(t)$ & $2.13*10^{-6}$ & $2.19*10^{-4}$ & $3.27*10^{-3}$ & $1.36*10^{-2}$ \\
\hline $s_3(t)$ & $1.54*10^{-6}$ & $1.60*10^{-4}$ & $2.64*10^{-3}$ & $1.25*10^{-2}$ \\ 
\hline $s_4(t)$ & $-2.03*10^{-6}$ & $-2.09*10^{-4}$ & $-3.09*10^{-3}$ & $-1.26*10^{-2}$ \\
\hline $s_5(t)$ & $-1.79*10^{-6}$ & $-1.85*10^{-4}$ & $-2.82*10^{-3}$ & $-1.21*10^{-2}$ \\
\hline $s_6(t)$ & $-4.76*10^{-8}$ & $-4.75*10^{-6}$ & $-4.62*10^{-5}$ & $-6.99*10^{-5}$ \\
\hline
\end{tabular}
\caption{The numerical computation of $s_i(t)=h_i(t)-h_i(0)$ in the case of system \eqref{eq:h_i-CahnHilliard} for $\e=0.008$. 
The initial positions of the layers are $0.18, 0.32, 0.45, 0.57, 0.71, 0.86$.}
\label{table:6layers-tau0}
\end{center}
\end{table}

\begin{table}[h!]
\vskip0.2cm
\begin{center}
\begin{tabular}{|c|c|c|c|c|}
\hline $s_i(t)$ &   $t=10^2$  &   $t=10^4$ &  $t=10^5$ & $t=1.55*10^5$  \\ 
\hline $s_1(t)$ & $2.99*10^{-7}$ & $3.00*10^{-5}$ & $3.13*10^{-4}$ & $4.94*10^{-4}$  \\
\hline $s_2(t)$ & $2.13*10^{-6}$ & $2.19*10^{-4}$ & $3.26*10^{-3}$ & $1.27*10^{-2}$ \\
\hline $s_3(t)$ & $1.54*10^{-6}$ & $1.60*10^{-4}$ & $2.63*10^{-3}$ & $1.17*10^{-2}$ \\ 
\hline $s_4(t)$ & $-2.03*10^{-6}$ & $-2.09*10^{-4}$ & $-3.08*10^{-3}$ & $-1.18*10^{-2}$ \\
\hline $s_5(t)$ & $-1.79*10^{-6}$ & $-1.84*10^{-4}$ & $-2.81*10^{-3}$ & $-1.13*10^{-2}$ \\
\hline $s_6(t)$ & $-4.76*10^{-8}$ & $-4.75*10^{-6}$ & $-4.62*10^{-5}$ & $-7.09*10^{-5}$ \\
\hline
\end{tabular}
\caption{The numerical computation of $s_i(t)=h_i(t)-h_i(0)$ in the case of system \eqref{eq:hN+1layers}.   
The values of the parameters are $\e=0.008$ and $\tau=125$; the initial positions of the layers are $0.18, 0.32, 0.45, 0.57, 0.71, 0.86$.}
\label{table:6layers-tau125}
\end{center}
\end{table}

In the previous computations we choose the same initial velocities for \eqref{eq:h_i-CahnHilliard}-\eqref{eq:hN+1layers} and 
the only difference is that in the case of system \eqref{eq:hN+1layers} the layers move slower than \eqref{eq:h_i-CahnHilliard}.
On the other hand, choosing different initial velocities, according to \eqref{eq:initial-h}, we can observe different dynamics. 
For instance, in the case of system \eqref{eq:hN+1layers} the points can change direction:
in Table \ref{table:change} we consider the same $\e$, $\tau$ and initial positions of the Table \ref{table:6layers-tau125}, but with opposite initial velocities,
namely, we choose $\ut_1=-\mathcal L(\ut_0)$ in \eqref{eq:initial-h}.
We see that the points change direction and after that we have the same behavior of Table \ref{table:6layers-tau125}.

\begin{table}[h!]
\vskip0.2cm
\begin{center}
\begin{tabular}{|c|c|c|c|c|c|}
\hline $s_i(t)$ &   $t=10^2$  & $t=2*10^2$ &  $t=10^4$ &  $t=10^5$ & $t=1.55*10^5$  \\ 
\hline $s_1(t)$ & $-0.11*10^{-6}$ & $1.4*10^{-9}$ & $2.93*10^{-5}$ & $3.12*10^{-4}$ & $4.94*10^{-4}$  \\
\hline $s_2(t)$ & $-0.80*10^{-6}$ & $9.7*10^{-9} $ & $2.14*10^{-4}$ & $3.25*10^{-3}$ & $1.22*10^{-2}$ \\
\hline $s_3(t)$ & $-0.58*10^{-6}$ & $7*10^{-9}$ & $1.56*10^{-4}$ & $2.62*10^{-3}$ & $1.12*10^{-2}$ \\ 
\hline $s_4(t)$ & $0.76*10^{-6}$ & $-9.3*10^{-9}$ & $-2.04*10^{-4}$ & $-3.07*10^{-3}$ & $-1.14*10^{-2}$ \\
\hline $s_5(t)$ & $0.67*10^{-6}$ & $-8.2*10^{-9}$ & $-1.80*10^{-4}$ & $-2.80*10^{-3}$ & $-1.09*10^{-2}$ \\
\hline $s_6(t)$ & $0.02*10^{-6}$ & $-2.3*10^{-10}$ & $-4.63*10^{-6}$ & $-4.61*10^{-5}$ & $-7.06*10^{-5}$ \\
\hline
\end{tabular}
\caption{In this table we consider the same initial positions and the same values of $\e$ and $\tau$ of Table \ref{table:6layers-tau125},  
but initial velocities with opposite sign respect to Table \ref{table:6layers-tau125}.}
\label{table:change}
\end{center}
\end{table}

We conclude this paper by comparing the solutions to systems \eqref{eq:hN+1layers} and \eqref{eq:h_i-CahnHilliard} as $\tau\to0^+$.
Let us rewrite system \eqref{eq:hN+1layers-compact} in the form
\begin{equation}\label{eq:system-h-h'}
	\begin{cases}
		\bm h'=\bm\eta,\\
		\tau\bm\eta'=\bm{\mathcal P}(\bm h)-\bm\eta-\tau\bm{\mathcal Q}(\bm h,\bm\eta),
	\end{cases}
\end{equation}
and system \eqref{eq:h_i-CahnHilliard} in the form
\begin{equation}\label{eq:h-CahnHilliard}
	\begin{cases}
		\bm h'=\bm\eta,\\
		\bm\eta=\bm{\mathcal P}(\bm h),
	\end{cases}
\end{equation}
where $\bm{\mathcal P}$ and $\bm{\mathcal Q}$ are defined in \eqref{eq:P(h)} and \eqref{eq:Q(h)}.
Notice that the functions $\bm{\mathcal P}$ and $\bm{\mathcal Q}$ are not well defined when $h_j=h_{j+1}$ for some $j$, 
but here we are interested in studying the system \eqref{eq:system-h-h'} when $l_j(t)>\delta$ for any $t\in[0,T]$ and any $j$ for some positive $\delta$ and $T$,
because system \eqref{eq:system-h-h'} describes the movement of the transition points when they are well separated for the hyperbolic Cahn--Hilliard equation \eqref{eq:hyp-CH}.
Therefore, in the following we consider system \eqref{eq:system-h-h'} for $t\in[0,T]$ where $T$ is such that $l_j(t)>\delta>0$ for any $t\in[0,T]$ and any $j\in\{1,N+2\}$.
%where $T_\e$ is the maximal time that the solution of \eqref{eq:hyp-CH} stays in the slow channel, see Theorem \ref{thm:main}.
Denote by $(\bm h,\bm\eta)$ the solutions to \eqref{eq:system-h-h'}  and $(\bm h_c,\bm\eta_c)$ the solutions of \eqref{eq:h-CahnHilliard},
and set
\begin{equation*}
	\mathcal E_\tau(t):=|\bm h(t)-\bm h_c(t)|+\tau|\bm\eta(t)-\bm\eta_c(t)|. 
\end{equation*}
A general theorem of Tihonov on singular perturbations can be applied to systems \eqref{eq:system-h-h'}-\eqref{eq:h-CahnHilliard} to prove that
if $(\bm h,\bm\eta)$ is a bounded solution of \eqref{eq:system-h-h'} for $t\in[0,T]$ and $\mathcal E_\tau(0)\rightarrow0$ as $\tau\rightarrow0$, 
then $\bm h\to\bm h_c$ uniformly in $[0,T]$ and $\bm\eta\to\bm\eta_c$ uniformly in $[t_1,T]$ for any $t_1>0$ as $\tau\to0^+$.

\begin{prop}\label{prop:tau0}
Fix $\e,\rho$ satisfying \eqref{eq:triangle} with $\e_0$ sufficiently small.
Let $(\bm h,\bm \eta)$ be a solution of \eqref{eq:system-h-h'} and $(\bm h_c,\bm\eta_c)$ a solution of \eqref{eq:h-CahnHilliard},
with $ \bm h(t),\bm h_c(t)\in\Omega_\rho$ for any $t\in[0,T]$.
Then, there exists $C>0$ (independent of $\tau$) such that 
\begin{equation}\label{E(t)<}
	\mathcal E_\tau(t)\leq C(\mathcal E_\tau(0)+\tau), \qquad \quad \mbox{ for } t\in[0,T].
\end{equation}
Moreover,
\begin{align}
	\int_0^T|\bm\eta(t)-\bm\eta_c(t)|dt &\leq C(\mathcal E_\tau(0)+\tau), \label{eta-L1}\\
	|\bm\eta(t)-\bm\eta_c(t)| &\leq C(\mathcal E_\tau(0)+\tau), \qquad \quad \mbox{ for } t\in[t_1,T],\label{eta-inf}
\end{align}
for all $t_1\in(0,T)$. 

In particular, from \eqref{E(t)<}, \eqref{eta-L1} and  \eqref{eta-inf}, it follows that, if $\mathcal E_\tau(0)\rightarrow0$ as $\tau\rightarrow0$, then
\begin{equation*}
	\lim_{\tau\rightarrow0}\sup_{t\in[0,T]}|\bm h(t)-\bm h_c(t)|
	=\lim_{\tau\rightarrow0} \int_0^T|\bm\eta(t)-\bm\eta_c(t)|dt
	=\lim_{\tau\rightarrow0}\sup_{t\in[t_1,T]}|\bm\eta(t)-\bm\eta_c(t)|=0,
 \end{equation*}
for any $t_1\in(0,T)$.
\end{prop}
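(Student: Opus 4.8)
The plan is to reduce the comparison to a linear Gr\"onwall argument for the \emph{difference} variables. Set $\bm p := \bm h - \bm h_c$ and $\bm q := \bm\eta - \bm\eta_c$. Subtracting the first equations of \eqref{eq:system-h-h'} and \eqref{eq:h-CahnHilliard} gives $\bm p' = \bm q$, while differentiating the algebraic relation $\bm\eta_c = \bm{\mathcal P}(\bm h_c)$ yields $\bm\eta_c' = D\bm{\mathcal P}(\bm h_c)\bm\eta_c$; subtracting the second equations then produces the relaxation form
\begin{equation*}
	\tau\bm q' + \bm q = \bm R, \qquad \bm R := \bigl[\bm{\mathcal P}(\bm h) - \bm{\mathcal P}(\bm h_c)\bigr] - \tau\bm{\mathcal Q}(\bm h, \bm\eta) - \tau\,\bm\eta_c'.
\end{equation*}
Because $\bm h(t), \bm h_c(t) \in \Omega_\rho$ on $[0,T]$, every denominator $h_j - h_{j-1}$ is bounded below by $\e/\rho$, so $\bm{\mathcal P}$ and $\bm{\mathcal Q}$ (together with $D\bm{\mathcal P}$) are smooth and bounded on the relevant compact set; moreover $\bm\eta$ is continuous on the compact interval $[0,T]$ and hence bounded. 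Consequently there are constants $L, C > 0$ (depending on $\e,\rho,T$ but not on $\tau$) with $|\bm{\mathcal P}(\bm h) - \bm{\mathcal P}(\bm h_c)| \leq L|\bm p|$ and $|\tau\bm{\mathcal Q}(\bm h,\bm\eta)| + |\tau\bm\eta_c'| \leq C\tau$, whence
\begin{equation*}
	|\bm R(t)| \leq L|\bm p(t)| + C\tau, \qquad t\in[0,T].
\end{equation*}

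Next I would solve the linear equation for $\bm q$ by Duhamel,
\begin{equation*}
	\bm q(t) = e^{-t/\tau}\bm q(0) + \frac1\tau\int_0^t e^{-(t-s)/\tau}\bm R(s)\,ds,
\end{equation*}
and extract the two consequences that drive the estimates. Integrating in time and using $\int_0^t e^{-(t-s)/\tau}\,ds \leq \tau$ together with Fubini gives the $L^1$ bound $\int_0^t |\bm q| \leq \tau|\bm q(0)| + \int_0^t |\bm R|$, while keeping $t$ fixed gives $\tau|\bm q(t)| \leq \tau|\bm q(0)| + \int_0^t e^{-(t-s)/\tau}|\bm R(s)|\,ds$. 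Inserting the $L^1$ bound into $\bm p(t) = \bm p(0) + \int_0^t \bm q$ and using $|\bm R| \leq L|\bm p| + C\tau$ yields
\begin{equation*}
	|\bm p(t)| \leq |\bm p(0)| + \tau|\bm q(0)| + C\tau T + L\int_0^t |\bm p(\sigma)|\,d\sigma \leq \mathcal E_\tau(0) + C\tau + L\int_0^t|\bm p(\sigma)|\,d\sigma,
\end{equation*}
so Gr\"onwall's inequality gives $|\bm p(t)| \leq C(\mathcal E_\tau(0) + \tau)$ on $[0,T]$. Feeding this back into the pointwise bound for $\tau|\bm q(t)|$ (bounding the integral by $L\sup_{[0,t]}|\bm p|\cdot\tau + C\tau^2$) produces $\tau|\bm q(t)| \leq C(\mathcal E_\tau(0) + \tau)$, and adding the two estimates gives \eqref{E(t)<}. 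The $L^1$ estimate \eqref{eta-L1} follows at once from $\int_0^T|\bm q| \leq \tau|\bm q(0)| + \int_0^T(L|\bm p| + C\tau) \leq C(\mathcal E_\tau(0)+\tau)$.

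For the uniform estimate \eqref{eta-inf} away from $t=0$, I would return to the Duhamel formula and treat its two terms separately: the convolution term is bounded by $\frac1\tau\int_0^t e^{-(t-s)/\tau}\,ds \cdot (L\sup|\bm p| + C\tau) \leq C(\mathcal E_\tau(0)+\tau)$ uniformly in $t$, while the initial-layer term obeys $e^{-t/\tau}|\bm q(0)| \leq \bigl(\tfrac1\tau e^{-t_1/\tau}\bigr)\,\tau|\bm q(0)|$ for $t \geq t_1$. The elementary bound $\sup_{x>0} x\,e^{-t_1 x} = 1/(e\,t_1)$ with $x = 1/\tau$ makes the prefactor $\leq 1/(e\,t_1)$, uniformly in $\tau$, so this term is $\leq C(t_1)\,\mathcal E_\tau(0)$ and \eqref{eta-inf} follows; the displayed limits are then immediate by letting $\tau\to0$ with $\mathcal E_\tau(0)\to0$. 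The one genuinely delicate point is precisely this handling of the initial layer: since $\bm q(0)$ may be as large as $\mathcal E_\tau(0)/\tau$, no uniform bound on $|\bm\eta - \bm\eta_c|$ can hold at $t=0$, and \eqref{E(t)<}--\eqref{eta-L1} survive only because the weight $\tau$ and the time-integration each absorb a factor $\tau$, whereas \eqref{eta-inf} needs the exponential damping on $[t_1,T]$. Bookkeeping of these powers of $\tau$ is the crux, while the Lipschitz and boundedness properties of $\bm{\mathcal P}$ and $\bm{\mathcal Q}$ are routine consequences of $\bm h,\bm h_c\in\Omega_\rho$.
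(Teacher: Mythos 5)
Your proof is correct and follows essentially the same route as the paper's: the same difference variables, the same Lipschitz/boundedness estimates for $\bm{\mathcal P}$, $J\bm{\mathcal P}$ and $\bm{\mathcal Q}$ on $\Omega_\rho$, a Gr\"onwall argument yielding \eqref{E(t)<}, and then the linear damping (your Duhamel formula is exactly the paper's integrating factor $\tau e^{t/\tau}$) together with the initial-layer bound $e^{-t_1/\tau}/\tau\le 1/(e\,t_1)$ to get \eqref{eta-L1} and \eqref{eta-inf}; working with the integral rather than the differential form of the equation for $\bm\eta-\bm\eta_c$, and applying Gr\"onwall to $|\bm h-\bm h_c|$ alone instead of to $\mathcal E_\tau$, are only cosmetic differences. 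One caveat: your justification that $|\bm\eta|$ is bounded by a constant independent of $\tau$ (continuity on the compact interval $[0,T]$) is not by itself sufficient, since that bound a priori depends on the solution and hence on $\tau$ --- but the paper's proof makes exactly the same implicit assumption when it absorbs $\tau\bm{\mathcal Q}(\bm h,\bm\eta)$ into $C\tau$, so this is not a gap relative to the paper's own argument.
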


\begin{proof}
For $t\in[0,T]$, define
\begin{equation*}
	\bm\delta_{\bm h}(t):=\bm h(t)-\bm h_c(t), \qquad \quad
	\bm\delta_{\bm\eta}(t):=\bm\eta(t)-\bm\eta_c(t).
\end{equation*}
Since $\bm h(t),\bm h_c(t)\in\Omega_\rho$ for $t\in[0,T]$, by using Proposition \ref{prop:alfa,beta} and using that $l_j>\delta>0$, we get
\begin{equation}\label{eq:P-Q-stime}
	\begin{aligned}
		|\mathcal{P}(\bm h_c)|&\leq \frac{C}{\delta}\exp(-A\delta/\e), \qquad \qquad\quad & |J\mathcal{P}(\bm h_c)|&\leq \frac{C}{\e^2\delta^2}\exp(-A\delta/\e), \\
		|\mathcal{P}(\bm h_c+\bm\delta_{\bm h})-\mathcal{P}(\bm h_c)|&\leq \frac{C}{\e^2\delta^2}\exp(-A\delta/\e)|\bm\delta_{\bm h}|, & |\mathcal Q(\bm h,\bm\eta)|&\leq \frac{C}{\delta}|\bm\eta|^2.
	\end{aligned}
\end{equation}
for all $t\in[0,T]$. 
Here and in what follows, $C$ is a positive constant independent of $\tau$ whose value may change from line to line. 
We have
\begin{equation*}
	\bm\delta_{\bm h}'=\bm\eta-\bm\eta_c, \qquad \quad 
	\tau\bm\delta_{\bm\eta}'=\mathcal{P}(\bm h_c+\bm\delta_c)-\mathcal{P}(\bm h_c)-\bm\delta_{\bm\eta}-\tau\mathcal Q(\bm h,\bm\eta)-\tau J\mathcal{P}(\bm h_c)\mathcal{P}(\bm h_c).
\end{equation*}
Since $\displaystyle\frac d{dt}|\bm\delta|=\frac{\bm\delta'\cdot\bm\delta}{|\bm\delta|}$ for any $\bm\delta(t)\in\mathbb{R}^{N+1}$, 
using estimates \eqref{eq:P-Q-stime} and Cauchy--Schwarz inequality, we obtain
\begin{equation*}
	\frac d{dt}|\bm\delta_{\bm h}|\leq|\bm\delta_{\bm\eta}|, \qquad 
	\tau\frac d{dt}|\bm\delta_{\bm\eta}|\leq C|\bm\delta_{\bm h}|-|\bm\delta_{\bm\eta}|+C\tau.
\end{equation*}
Summing, one has
\begin{equation*}
	\frac d{dt}\left(|\bm\delta_{\bm h}|+\tau|\bm\delta_{\bm\eta}|\right)\leq C|\bm\delta_{\bm h}|+C\tau,
\end{equation*}
and so, 
\begin{equation}\label{d/dt E<}
	\frac d{dt}\mathcal E_\tau(t)\leq C\left(\mathcal{E}_\tau(t)+\tau\right),  \qquad \quad \mbox{ for } t\in[0,T].
\end{equation}
Integrating \eqref{d/dt E<} and applying Gr\"onwall's Lemma, we obtain \eqref{E(t)<}. 
In particular, from \eqref{E(t)<}, it follows that 
\begin{equation}\label{delta_h}
	|\bm\delta_{\bm h}(t)|\leq C(\mathcal E_\tau(0)+\tau),  \qquad \quad\qquad \mbox{ for } t\in[0,T].
\end{equation}
Substituting \eqref{delta_h} into the equation for $\bm\delta_{\bm\eta}$, we obtain
\begin{equation*}
	\tau\frac{d}{dt}|\bm\delta_{\bm\eta}|\leq-|\bm\delta_{\bm\eta}|+C(\mathcal E_\tau(0)+\tau),
\end{equation*}
and integrating the latter estimate we infer \eqref{eta-L1}; moreover, we have
\begin{equation*}
	\frac{d}{dt}\left(\tau e^{t/\tau}|\bm\delta_{\bm\eta}(t)|\right)\leq C(\mathcal E_\tau(0)+\tau)e^{t/\tau},
\end{equation*}
and so
\begin{equation*}
	|\bm\delta_{\bm\eta}(t)| \leq C(\mathcal E_\tau(0)+\tau) +\mathcal E_\tau(0)\frac{e^{-t/\tau}}{\tau},
\end{equation*}
for $t\in[0,T]$.
Therefore, for any fixed $t_1\in(0,T)$, we obtain \eqref{eta-inf}.
\end{proof}

\section*{Acknowledgments} 
This is a pre-print of an article published in Journal of Dynamics and Differential Equations. 
The final authenticated version is available online at: https://doi.org/10.1007/s10884-019-09806-6.
We thank the anonymous referee for the careful review and for the comments which helped us to improve the paper.

\end{document}